\DeclareFontFamily{OMX}{MnSymbolE}{}
\DeclareSymbolFont{MnLargeSymbols}{OMX}{MnSymbolE}{m}{n}
\DeclareFontShape{OMX}{MnSymbolE}{m}{n}{
	<-6>  MnSymbolE5
	<6-7>  MnSymbolE6
	<7-8>  MnSymbolE7
	<8-9>  MnSymbolE8
	<9-10> MnSymbolE9
	<10-12> MnSymbolE10
	<12->   MnSymbolE12
}{}
\DeclareFontShape{OMX}{MnSymbolE}{b}{n}{
	<-6>  MnSymbolE-Bold5
	<6-7>  MnSymbolE-Bold6
	<7-8>  MnSymbolE-Bold7
	<8-9>  MnSymbolE-Bold8
	<9-10> MnSymbolE-Bold9
	<10-12> MnSymbolE-Bold10
	<12->   MnSymbolE-Bold12
}{}
\let\llangle\@undefined
\let\rrangle\@undefined
\DeclareMathDelimiter{\llangle}{\mathopen}%
{MnLargeSymbols}{'164}{MnLargeSymbols}{'164}
\DeclareMathDelimiter{\rrangle}{\mathclose}%
{MnLargeSymbols}{'171}{MnLargeSymbols}{'171}
\newcommand{\opnorm}{\@ifstar\@opnorms\@opnorm}
\newcommand{\@opnorms}[1]{%
	\left|\mkern-1.5mu\left|\mkern-1.5mu\left|
	#1
	\right|\mkern-1.5mu\right|\mkern-1.5mu\right|
}
\newcommand{\@opnorm}[2][]{%
	\mathopen{#1|\mkern-1.5mu#1|\mkern-1.5mu#1|}
	#2
	\mathclose{#1|\mkern-1.5mu#1|\mkern-1.5mu#1|}
}
\setlist[enumerate]{label=$\rm{(\roman*)}$,leftmargin=\parindent}
\numberwithin{equation}{section}
\newtheorem{thm}{Theorem}
\newtheorem{lem}[thm]{Lemma}
\newtheorem{prop}[thm]{Proposition}
\newtheorem{assume}{Assumption}
\newtheorem{algo}{Algorithm}
\theoremstyle{definition}
\newtheorem{defi}{Definition}
\newtheorem{rmk}{Remark}
\newcommand{\sC}{\mathcal{C}}
\newcommand{\sR}{\mathbb{R}}
\newcommand{\bR}{\mathbf{R}}
\newcommand{\R}{\mathbb{R}}%
\newcommand{\dom}{\mathrm{dom}}
\newcommand{\Ball}{\mathrm{Ball}}
\newcommand{\prox}{\mathrm{prox}}
\newcommand{\crit}{\mathrm{crit}}
\newcommand{\dist}{\mathrm{dist}}
\newcommand{\KL}{K\L }
\newcommand{\Loja}{\L ojasiewicz }
\newcommand{\X}{\mathbf{X}}
\newcommand{\Z}{\mathbf{Z}}
\newcommand{\Y}{\mathbf{Y}}
\newcommand{\E}{\mathcal{E}}
\newcommand{\Id}{\mathrm{Id}}
\newcommand{\0}{\mathbf{0}}
\newcommand{\mysum}{\displaystyle \sum\limits}
\newcommand{\CPsiu}{C_{0}}
\newcommand{\CPsix}{C_{1}}
\newcommand{\Cprex}{C_{2}}
\newcommand{\Cprey}{C_{3}}
\newcommand{\Cpreu}{C_{4}}
\newcommand{\Cdecx}{C_{2}}
\newcommand{\Cdecy}{C_{3}}
\newcommand{\Cdecu}{C_{4}}
\newcommand{\Csgrx}{C_{5}}
\newcommand{\Csgry}{C_{6}}
\newcommand{\Csgru}{C_{7}}
\newcommand{\Cmin}{C_{8}}
\newcommand{\Cmax}{C_{9}}
\newcommand{\Crec}{C_{10}}
\newcommand{\Citex}{C_{11}}
\newcommand{\Citez}{C_{12}}
\title{A proximal minimization algorithm for structured nonconvex and nonsmooth problems}
\author{Radu Ioan Bo\c{t}\footnote{Faculty of Mathematics, University of Vienna, Oskar-Morgenstern-Platz 1, 1090 Vienna, Austria, 
e-mail: \url{radu.bot@univie.ac.at}. Research partially supported by FWF (Austrian Science Fund), project I 2419-N32.} \and 
Ern\"{o} Robert Csetnek\footnote{Faculty of Mathematics, University of Vienna, Oskar-Morgenstern-Platz 1, 1090 Vienna, 
Austria, e-mail: \url{ernoe.robert.csetnek@univie.ac.at}. Research supported by FWF (Austrian Science Fund), project P 29809-N32.} 
\and Dang-Khoa Nguyen\footnote{Faculty of Mathematics, University of Vienna, Oskar-Morgenstern-Platz 1, 1090 Vienna, Austria, 
e-mail: \url{dang-khoa.nguyen@univie.ac.at}. Research supported by the Doctoral Programme 
{\it Vienna Graduate School on Computational Optimization (VGSCO)} which is funded by FWF (Austrian Science Fund), project W1260-N35.}}
\begin{document}
	\maketitle
	
\noindent \textbf{Abstract.} We propose a proximal algorithm for minimizing objective functions consisting of three summands:  the composition of a nonsmooth function with a linear operator, another nonsmooth function, each of the nonsmooth summands depending on an independent block variable, and a smooth function which couples the two block variables. The algorithm is a full splitting method, which means that the nonsmooth functions are processed via their proximal operators, the smooth function via gradient steps, and the linear operator via matrix times vector multiplication. We provide sufficient conditions for the boundedness of the generated sequence and prove that any cluster point of the latter is a KKT point of the minimization problem. In the setting of the Kurdyka-\L{}ojasiewicz property we show global convergence, and derive convergence rates for the iterates in terms of the \L{}ojasiewicz exponent. \vspace{1ex}

\noindent \textbf{Key Words.} structured nonconvex and nonsmooth optimization, proximal algorithm, full splitting scheme, Kurdyka-\L{}ojasiewicz property, limiting subdifferential \vspace{1ex}

\noindent \textbf{AMS subject classification.} 65K10, 90C26, 90C30
	
\section{Introduction}

\subsection{Problem formulation and motivation}

In this paper we propose a full splitting algorithm for solving nonconvex and nonsmooth problems of the form
\begin{equation}
\label{intro:pb}
\min\limits_{\left( x , y \right) \in \sR^{m} \times \sR^{q}} \left\lbrace F \left( Ax \right) + G \left( y \right) + H \left( x , y \right) \right\rbrace ,
\end{equation}
where $F \colon \sR^{p} \to \sR \cup \left\lbrace + \infty \right\rbrace$ and $G \colon \sR^{q} \to \sR \cup \left\lbrace + \infty \right\rbrace$ 
are proper and lower semicontinuous functions, $H \colon \sR^{m} \times \sR^{q} \to \sR$ is a Fr\'{e}chet differentiable function with Lipschitz continuous gradient, and $A \colon \sR^{m} \to \sR^{p}$ is a linear operator. It is noticeable that neither for the nonsmooth nor for the smooth functions convexity is assumed.

In case $m=p$ and $A$ is the identity operator, Bolte, Sabach and Teboulle formulated in \cite{Bolte-Sabach-Teboulle:MP}, also in the nonconvex setting, a proximal alternating linearization method (PALM) for solving \eqref{intro:pb}. PALM is a proximally regularized variant of the Gauss-Seidel alternating minimization scheme and it basically consists of two proximal-gradient steps. It had a significant impact in the optimization community, as it can be used to solve a large variety of nonconvex and nonsmooth problems arising in applications such as: matrix factorization, image deblurring and denoising, the feasibility problem, compressed sensing, etc. An inertial version of PALM has been proposed by Pock and Sabach in \cite{Pock-Sabach}.

A naive approach of PALM for solving \eqref{intro:pb} would require the calculation of the proximal operator of the function $F \circ A$, for which, in general, even in the convex case, a closed formula is not available. 
In the last decade, an impressive progress has been made in the field of primal-dual/proximal ADMM algorithms, designed to solve convex optimization problems involving compositions with linear operators in the spirit of the full splitting paradigm. 
One of the pillars of this development is the conjugate duality theory which is available for convex optimization problems. In addition, several fundamental algorithms, like the proximal method, the forward-backward splitting method, the regularized Gauss-Seidel method, the proximal alternating method, the forward-backward-forward method, and some of their inertial variants have been exported from the convex to the nonconvex setting and proved to convergence globally in the setting of the Kurdyka-\L{}ojasiewicz property (see, for instance, \cite{Attouch-Bolte, Attouch-Bolte-Redont-Soubeyran, Attouch-Bolte-Svaiter, Bolte-Sabach-Teboulle:MP, Bot-Csetnek, Bot-Csetnek-Laszlo}). However, a similar undertaking for structured optimization problems, such as those which involve compositions with linear operators and require for primal-dual methods with a full-splitting character, was by now not very successful. The main reason for that is the absence in the nonconvex setting of a correspondent for the convex  conjugate duality theory.

Despite these premises we succeed to provide in this paper a full splitting algorithm for solving the nonconvex and nonsmooth problem \eqref{intro:pb}; more precisely, the nonsmooth functions are processed via their proximal operators, the smooth function via gradient steps, and the linear operator via matrix times vector multiplication. The convergence analysis is based on  a descent inequality, which we prove for a regularization of the augmented Lagrangian $L_{\beta}:\R^m\times\R^q\times\R^p\times\R^p\to \sR \cup \left\lbrace + \infty\right\rbrace$
$$L_{\beta}(x,y,z,u)=F \left( z \right) + G \left( y \right) + H \left( x , y \right) + \left\langle u , Ax - z \right\rangle + \dfrac{\beta}{2} \left\lVert Ax - z \right\rVert ^{2}, \beta >0,$$
associated with problem \eqref{intro:pb}. This is obtained by an appropriate tuning of the parameters involved in the description of the algorithm. In addition, we provide sufficient conditions in terms of the input functions $F, G$ and $H$ for the boundedness of the generated sequence of iterates. We also show that any cluster point of this sequence is a KKT point of the optimization 
problem \eqref{intro:pb}. By assuming that the above-mentioned regularization of the augmented Lagrangian satisfies the Kurdyka-\L{}ojasiewicz property, we prove global convergence. If this function satisfies the \L{}ojasiewicz property, then we can even derive convergence rates for the sequence of iterates formulated in terms of the \L{}ojasiewicz exponent. For similar approaches based on the use of the Kurdyka-\L{}ojasiewicz property in the proof of the global convergence of nonconvex optimization algorithms we refer to the papers of Attouch and Bolte \cite{Attouch-Bolte}, Attouch, Bolte and Svaiter \cite{Attouch-Bolte-Svaiter}, and Bolte, Sabach and Teboulle \cite{Bolte-Sabach-Teboulle:MP}.  

One of the benefits which comes with the new algorithm is that furnishes a full splitting iterative scheme for the nonsmooth and nonconvex optimization problem
\begin{equation}
\label{intro:pb:particular}
\min\limits_{x \in \sR^{m}} \left\lbrace F \left( Ax \right) + H \left( x \right) \right\rbrace,
\end{equation}
which follows as a particular case of \eqref{intro:pb}  for $G(y) = 0$ and $H(x,y) = H(x)$ for any $(x,y) \in \R^m \times \R^q$, where $H : \R^m \rightarrow \R$ is a Fr\'{e}chet differentiable function with Lipschitz continuous gradient.

In the last years, several articles have been devoted to the design and convergence analysis of algorithms for solving structured optimization problems in the nonconvex and nosmooth setting. 
They all focus on algorithms relying on the alternating direction method of multipliers (ADMM), which is well-known not to be a full splitting algorithm. Nonconvex ADMM algorithms for  \eqref{intro:pb:particular} have been proposed in  \cite{Li-Pong}, 
under the assumption that $H$ is twice continuously differentiable with bounded Hessian, and in \cite{Yang-Pong-Chen}, under the assumption that one of the summands is convex and continuous on its effective domain. In \cite{Wang-Yin-Zeng}, a general nonconvex 
optimization problem involving compositions with linear operators and smooth coupling functions is considered and the importance of providing sufficient conditions for the boundedness of the iterates generated by the proposed  nonconvex ADMM algorithm is recognized. This is achieved by assuming that 
the objective function is continuous and coercive over the feasible set, while its nonsmooth part is either restricted prox-regular or piecewise linear. Similar ingredients are used in \cite{Liu-Shen-Gu} in the convergence analysis of a nonconvex linearized ADMM algorithm. In \cite{Hong-Lou-Razaviyayn}, the ADMM technique is used to minimize the sum of finitely many smooth nonconvex functions and a nonsmooth convex function, by reformulating it as a general consensus problem. In \cite{Wang-Cao-Xu}, a multi-block Bregman ADMM algorithm is proposed and analyzed in a setting based on restrictive strong convexity assumptions. On the other hand, in \cite{Jiang-Lin-Ma-Zhang}, two proximal variants of the ADMM algorithm are introduced and the analyis is focused on providing iteration complexity bounds to reach an $\varepsilon$-KKT solutions. 

We would like to mention in this context also the recent publication \cite{Bolte-Sabach-Teboulle:MOOR} for the case when $A$ is replaced by a nonlinear continuously differentiable operator.

\subsection{Notations and preliminaries}

Every space $\sR^{d}$, where $d$ is a positive integer, is assumed to be equipped with the Euclidean inner product $\left\langle \cdot , \cdot \right\rangle$ and associated norm $\left\lVert \cdot \right\rVert = \sqrt{\left\langle \cdot , \cdot \right\rangle}$. The Cartesian product $\sR^{d_{1}} \times \sR^{d_{2}} \times \ldots \times \sR^{d_{k}}$ of the Euclidean spaces $\R^{d_i}, i=1,...,k$,  will be endowed with inner product and associated norm defined for $x := \left( x_{1} , \ldots , x_{k} \right), y := \left( y_{1} , \ldots , y_{k} \right) 
\in \sR^{d_{1}} \times \sR^{d_{2}} \times \ldots \times \sR^{d_{k}}$ by
\begin{equation*}
\left\llangle x , y \right\rrangle = \mysum_{i=1}^{k} \left\langle x_{i} , y_{i} \right\rangle \quad \textrm{ and } \quad \opnorm{x} = \sqrt{\mysum_{i=1}^{k} \left\lVert x_{i} \right\rVert ^{2}},
\end{equation*}
respectively. For every $x := \left( x_{1} , \ldots , x_{k} \right) \in \sR^{d_{1}} \times \sR^{d_{2}} \times \ldots \times \sR^{d_{k}}$ we have
\begin{equation}
\label{intro:norm-inq}
\dfrac{1}{\sqrt{k}} \mysum_{i=1}^{k} \left\lVert x_{i} \right\rVert \leq \opnorm{x} = \sqrt{\mysum_{i=1}^{k} \left\lVert x_{i} \right\rVert ^{2}} \leq \mysum_{i=1}^{k} \left\lVert x_{i} \right\rVert .
\end{equation}

Let $\psi \colon \sR^{d} \to \sR \cup \left\lbrace + \infty \right\rbrace$ be a proper and lower semicontinuous function and  $x$ an element of its effective domain $\dom \psi := \left\lbrace  y \in \sR^{d} \colon \psi \left( y \right) < + \infty \right\rbrace$. The Fr\'{e}chet (viscosity) subdifferential of $\psi$ at $x$ is
\begin{equation*}
\widehat{\partial} \psi \left( x \right) := \left\lbrace d \in \sR^{d} \colon \liminf\limits_{y \to x} \dfrac{\psi \left( y \right) - \psi \left( x \right) - \left\langle d , y - x \right\rangle}{\left\lVert y - x \right\rVert} \geq 0 \right\rbrace
\end{equation*}
and the limiting (Mordukhovich) subdifferential of $\psi$ at $x$ is
\begin{align*} 
\partial \psi \left( x \right) := \{ d \in \sR^{d} \colon  & \mbox{exist sequences} \ x_{n} \to x \ \mbox{and} \  d_{n} \to d \textrm{ as } n \to +\infty \\
& \mbox{such that } \psi \left( x_{n} \right) \to \psi \left( x \right) \textrm{ as } n \to +\infty \textrm{ and } d_{n} \in \widehat{\partial} \psi \left( x_{n} \right) \ \mbox{for any} \ n \geq 0\} .
\end{align*}
For $x \notin \dom  \psi$, we set $\widehat{\partial} \psi \left( x \right) = \partial \psi \left( x \right) := \emptyset$.

The inclusion $\widehat{\partial} \psi \left( x \right) \subseteq \psi \left( x \right)$ holds for each $x \in \sR^{d}$. 
If $\psi$ is convex, then the two subdifferentials coincide with the convex subdifferential of $\psi$, thus
\begin{equation*}
\widehat{\partial} \psi \left( x \right) = \partial \psi \left( x \right) = \left\lbrace d \in \sR^{d} \colon \psi \left( y \right) \geq \psi\left( x \right) + \left\langle d , y - x \right\rangle \ \forall y \in \sR^{d} \right\rbrace \ \mbox{for any} \ x \in \R^d.
\end{equation*}
If $x \in \sR^{d}$ is a local minimum of $\psi$, then $0 \in \partial \psi \left( x \right)$. 
We denote by $\crit \left( \psi \right) := \left\lbrace x \in \sR^{d} : 0 \in \partial \psi \left( x \right) \right\rbrace$ the set of critical points of $\psi$. 
The limiting subdifferential fulfils the following closedness criterion: if $\left\lbrace x_{n} \right\rbrace _{n \geq 0}$ and $\left\lbrace d_{n} \right\rbrace _{n \geq 0}$ are sequence in $\sR^{d}$ such that $d_{n} \in \partial \psi \left( x_{n} \right)$ for any $n \geq 0$ and $\left( x_{n} , d_{n}  \right) \to \left( x , d \right)$ and $\psi \left( x_{n} \right) \to \psi \left( x \right)$ as $n \to +\infty$, then $d \in \partial \psi \left( x \right)$. 
We also have the following subdifferential sum formula (see \cite[Proposition 1.107]{Mordukhovich}, \cite[Exercise 8.8]{Rockafellar-Wets}): if $\Phi \colon \sR^{d} \to \sR$ is a continuously differentiable function, then $\partial \left( \psi + \phi \right) \left( x \right) = \partial \psi \left( x \right) + \nabla \phi \left( x \right)$ for any $x \in \sR^{d}$; and a formula for the subdifferential of the composition of $\psi$ with a linear operator $A \colon \sR^{k} \to \sR^{d}$ (see \cite[Proposition 1.112]{Mordukhovich}, \cite[Exercise 10.7]{Rockafellar-Wets}): 
if $A$ is injective, then $\partial \left( \psi \circ A \right) \left( x \right) = A^{T} \partial \psi \left( A x \right)$ for any $x \in \R^k$. 

The following proposition collects some important properties of a (not necessarily convex) Fr\'echet differentiable function with Lipschitz continuous gradient. For the proof of this result we refer to \cite[Proposition 1]{Bot-Nguyen}.
\begin{prop}
	\label{prop:lips-semiconvex}
	Let $\psi \colon \sR^{d} \to \sR$ be Fr\'echet differentiable such that its gradient is Lipschitz continuous with constant $\ell  > 0$. Then the following statements are true: 
	\begin{enumerate}
		\item 
		\label{prop:semi}
		For every $x,y \in \sR^{d}$ and every $z \in \left[ x,y \right] = \left\lbrace (1-t)x + ty \colon t \in [0,1] \right\rbrace$ it holds
		\begin{equation}
		\label{eq:semi-convex}
		\psi \left( y \right) \leq \psi \left( x \right) + \left\langle \nabla \psi \left( z \right) , y - x \right\rangle + \dfrac{\ell}{2} \left\lVert y - x \right\rVert ^{2};
		\end{equation}
		
		\item 
		\label{prop:lips}
		For any $\gamma \in \sR \backslash \left\lbrace 0 \right\rbrace$ it holds
		\begin{equation}
		\label{eq:bounded-below}
		\inf\limits_{x \in \sR^{d}} \left\lbrace \psi \left( x \right) - \left( \dfrac{1}{\gamma} - \dfrac{\ell}{2 \gamma^{2}} \right) \left\lVert \nabla \psi \left( x \right) \right\rVert ^{2} \right\rbrace \geq \inf\limits_{x \in \sR^{d}} \psi \left( x \right).
		\end{equation}
	\end{enumerate}	
\end{prop}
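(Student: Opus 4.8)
The plan is to establish statement (i) directly and then derive statement (ii) as a one-line consequence of it.

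For (i), I would write the prescribed intermediate point as $z = x + s(y-x)$ with $s \in [0,1]$ and start from the fundamental theorem of calculus applied to $t \mapsto \psi(x + t(y-x))$, namely
\[
\psi(y) - \psi(x) = \int_0^1 \left\langle \nabla\psi(x + t(y-x)), y-x \right\rangle \, dt.
\]
Subtracting $\left\langle \nabla\psi(z), y-x\right\rangle$ and using $\int_0^1 \left\langle\nabla\psi(z), y-x\right\rangle \, dt = \left\langle\nabla\psi(z), y-x\right\rangle$, the left-hand side becomes $\psi(y) - \psi(x) - \left\langle\nabla\psi(z), y-x\right\rangle$ while the integrand turns into $\left\langle \nabla\psi(x+t(y-x)) - \nabla\psi(x+s(y-x)), y-x\right\rangle$. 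Bounding this via Cauchy--Schwarz and the $\ell$-Lipschitz continuity of $\nabla\psi$ by $\ell\,|t-s|\,\|y-x\|^2$, the estimate reduces to the elementary integral $\int_0^1 |t-s|\,dt = s^2 - s + \tfrac12$, whose maximum over $s \in [0,1]$ equals $\tfrac12$ and is attained at the endpoints. This produces exactly the constant $\ell/2$. I expect this integral bound to be the only step requiring genuine care: it is precisely what allows $z$ to be an arbitrary point of the segment rather than the left endpoint, the choice $s=0$ recovering the classical descent inequality.

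For (ii), I would apply (i) in the special case $z = x$ to the auxiliary point $y := x - \tfrac1\gamma \nabla\psi(x)$, which is legitimate for every $\gamma \in \R\setminus\{0\}$ irrespective of its sign. Substituting $y - x = -\tfrac1\gamma\nabla\psi(x)$ gives
\[
\psi(y) \leq \psi(x) - \frac{1}{\gamma}\|\nabla\psi(x)\|^2 + \frac{\ell}{2\gamma^2}\|\nabla\psi(x)\|^2 = \psi(x) - \left(\frac{1}{\gamma} - \frac{\ell}{2\gamma^2}\right)\|\nabla\psi(x)\|^2.
\]
Since $\psi(y) \geq \inf_{w \in \R^d}\psi(w)$, rearranging yields $\psi(x) - \bigl(\tfrac1\gamma - \tfrac{\ell}{2\gamma^2}\bigr)\|\nabla\psi(x)\|^2 \geq \inf_{w}\psi(w)$ for every $x \in \R^d$, and taking the infimum over $x$ on the left-hand side delivers the claim. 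The entire substance here is the choice of test point $y$; once the gradient step is inserted the conclusion is immediate, so I anticipate no real obstacle in this part.
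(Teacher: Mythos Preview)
Your argument is correct in both parts. The paper itself does not give a proof of this proposition; it simply refers to \cite[Proposition~1]{Bot-Nguyen}, so there is no in-paper proof to compare against, but your integral-remainder derivation of (i) and gradient-step substitution for (ii) are the standard route and would be entirely acceptable here.
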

The Descent Lemma, which says that for a Fr\'echet differentiable function $\psi \colon \sR^{d} \to \sR$ having a Lipschitz continuous gradient with constant $\ell > 0$ it holds
\begin{equation*} 
\psi \left( y \right) \leq \psi \left( x \right) + \left\langle \nabla \psi \left( x \right) , y - x \right\rangle + \dfrac{\ell}{2} \left\lVert y - x \right\rVert ^{2} \quad \forall x,y \in \sR^{d},
\end{equation*}
follows from \eqref{eq:semi-convex} for $z:=x$.

In addition, by taking in \eqref{eq:semi-convex} $z:=y$ we obtain
\begin{equation*}
\psi \left( x \right) \geq \psi \left( y \right) + \left\langle \nabla \psi \left( y \right) , x - y \right\rangle - \dfrac{\ell}{2} \left\lVert x - y \right\rVert ^{2} \quad \forall x,y \in \sR^{d}.
\end{equation*}
This is equivalent to the fact that $\psi + \dfrac{\ell}{2} \left\lVert \cdot \right\rVert ^{2}$ is a convex function, which is the same with $\psi$ is $\ell$-semiconvex  (\cite{Bolte-Daniilidis-Ley-Mazet}). In other words, a consequence of Proposition \eqref{prop:lips-semiconvex} is, that a Fr\'echet differentiable function with $\ell$-Lipschitz continuous gradient is $\ell$-semiconvex.

We close ths introductory section by presenting two convergence results for real sequences that will be used in the sequel in the convergence analysis. The following lemma is useful when proving convergence of numerical algorithms relying on Fej\'er monotonicity techniques (see, for instance, \cite[Lemma 2.2]{Bot-Csetnek}, \cite[Lemma 2]{Bot-Csetnek-Laszlo}).
\begin{lem}
	\label{lem:conv-pre}
	Let $\left\lbrace \xi_{n} \right\rbrace _{n \geq 0}$ be a sequence of real numbers and $\left\lbrace \omega_{n} \right\rbrace _{n \geq 0}$ a sequence of real nonnegative numbers. Assume that $\left\lbrace \xi_{n} \right\rbrace _{n \geq 0}$ is bounded from below and that for any $n \geq 0$
	\begin{equation*}
	\xi_{n+1} + \omega_{n} \leq \xi_{n}.
	\end{equation*}
	Then the following statements hold:
	\begin{enumerate}
		\item the sequence $\left\lbrace \omega_{n} \right\rbrace _{n \geq 0}$ is summable, namely $\mysum_{n \geq 0} \omega_{n} < + \infty$;
		\item the sequence $\left\lbrace \xi_{n} \right\rbrace _{n \geq 0}$ is monotonically decreasing and convergent.
	\end{enumerate}
\end{lem}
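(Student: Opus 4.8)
The plan is to treat both claims as consequences of a single telescoping argument, with the monotonicity in statement (ii) supplying the tool needed to close statement (i).

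First I would establish the monotonicity asserted in (ii). Since $\omega_{n} \geq 0$ for every $n$, the hypothesis $\xi_{n+1} + \omega_{n} \leq \xi_{n}$ immediately yields $\xi_{n+1} \leq \xi_{n}$, so $\left\lbrace \xi_{n} \right\rbrace_{n \geq 0}$ is nonincreasing. Being in addition bounded from below by assumption, it converges by the monotone convergence theorem; I would denote by $\underline{\xi} := \inf_{n \geq 0} \xi_{n} \in \sR$ its infimum, which coincides with its limit.

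Next I would turn to the summability claim (i). Rearranging the defining inequality gives $0 \leq \omega_{n} \leq \xi_{n} - \xi_{n+1}$ for every $n$, and summing over $n = 0 , \ldots , N$ produces a telescoping sum:
\begin{equation*}
\mysum_{n=0}^{N} \omega_{n} \leq \mysum_{n=0}^{N} \left( \xi_{n} - \xi_{n+1} \right) = \xi_{0} - \xi_{N+1} \leq \xi_{0} - \underline{\xi},
\end{equation*}
where the final inequality uses $\xi_{N+1} \geq \underline{\xi}$. Thus the partial sums of the nonnegative series $\mysum_{n \geq 0} \omega_{n}$ are bounded above by the finite constant $\xi_{0} - \underline{\xi}$, uniformly in $N$. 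Since a series of nonnegative terms converges if and only if its partial sums are bounded, letting $N \to + \infty$ yields $\mysum_{n \geq 0} \omega_{n} \leq \xi_{0} - \underline{\xi} < + \infty$, which is the asserted summability.

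There is no genuine obstacle in this argument; it is entirely elementary. The only point worth emphasising is that the single hypothesis of boundedness from below does double duty: it supplies the lower bound required for the monotone convergence theorem in (ii), and, through the infimum $\underline{\xi}$, it furnishes the uniform upper bound on the telescoped partial sums that makes the series in (i) summable. The nonnegativity of $\left\lbrace \omega_{n} \right\rbrace_{n \geq 0}$ is likewise used twice, first to deduce the monotonicity of $\left\lbrace \xi_{n} \right\rbrace_{n \geq 0}$ and then to pass from boundedness of the partial sums to convergence of the series.
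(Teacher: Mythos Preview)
Your proof is correct and is the standard elementary argument. The paper itself does not give a proof of this lemma but merely cites \cite[Lemma 2.2]{Bot-Csetnek} and \cite[Lemma 2]{Bot-Csetnek-Laszlo}, so there is nothing further to compare.
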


The following lemma can be found in \cite[Lemma 2.3]{Bot-Csetnek} (see, also \cite[Lemma 3]{Bot-Csetnek-Laszlo}).

\begin{lem}
	\label{lem:conv-ext}
	Let $\left\lbrace a_{n} \right\rbrace _{n \geq 0}$ and $\left\lbrace b_{n} \right\rbrace _{n \geq 1}$ be sequences of real nonnegative numbers such that for any $n \geq 1$
	\begin{equation}
	\label{sum:hypo}
	a_{n+1} \leq \chi_{0} a_{n} + \chi_{1} a_{n-1} + b_{n},
	\end{equation}
	where $\chi_{0} \in \sR$ and $\chi_{1} \geq 0$ fulfill $\chi_{0} + \chi_{1} < 1$, and $\mysum_{n \geq 1} b_{n} < + \infty$.
	Then $\mysum_{n \geq 0} a_{n} < + \infty.$
\end{lem}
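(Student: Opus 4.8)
The plan is to establish a uniform bound on the partial sums $S_{N} := \mysum_{n=0}^{N} a_{n}$ and then invoke monotonicity. Since every $a_{n}$ is nonnegative, $\left\lbrace S_{N} \right\rbrace_{N \geq 0}$ is nondecreasing, so it suffices to show that it is bounded above; the summability of $\left\lbrace a_{n} \right\rbrace_{n \geq 0}$ then follows at once.

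First I would sum the recurrence \eqref{sum:hypo} over $n = 1 , \ldots , N$ and re-index each of the three sums in terms of $S_{N}$. Writing $B := \mysum_{n \geq 1} b_{n} < + \infty$, the left-hand side becomes $\mysum_{n=2}^{N+1} a_{n} = S_{N+1} - a_{0} - a_{1}$, while $\chi_{0} \mysum_{n=1}^{N} a_{n} = \chi_{0} \left( S_{N} - a_{0} \right)$, $\chi_{1} \mysum_{n=1}^{N} a_{n-1} = \chi_{1} \left( S_{N} - a_{N} \right)$, and $\mysum_{n=1}^{N} b_{n} \leq B$ by nonnegativity of the $b_{n}$. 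Collecting terms yields
\begin{equation*}
S_{N+1} \leq \left( \chi_{0} + \chi_{1} \right) S_{N} + a_{0} \left( 1 - \chi_{0} \right) + a_{1} - \chi_{1} a_{N} + B.
\end{equation*}
Because $\chi_{1} \geq 0$ and $a_{N} \geq 0$, the term $- \chi_{1} a_{N}$ is nonpositive and may be discarded, so that $S_{N+1} \leq \left( \chi_{0} + \chi_{1} \right) S_{N} + C$ with the finite constant $C := a_{0} \left( 1 - \chi_{0} \right) + a_{1} + B$.

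The key point now is that one cannot bound the sequence termwise, since $\chi_{0}$ may be negative; it is only after summation that the favourable combined coefficient $\chi_{0} + \chi_{1} < 1$ appears. Using the monotonicity $S_{N} \leq S_{N+1}$ in the last inequality gives $S_{N} \leq \left( \chi_{0} + \chi_{1} \right) S_{N} + C$, that is,
\begin{equation*}
\left( 1 - \chi_{0} - \chi_{1} \right) S_{N} \leq C.
\end{equation*}
Since $1 - \chi_{0} - \chi_{1} > 0$ by hypothesis, we obtain $S_{N} \leq C / \left( 1 - \chi_{0} - \chi_{1} \right)$ for every $N$, a bound independent of $N$. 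Thus $\left\lbrace S_{N} \right\rbrace_{N \geq 0}$ is nondecreasing and bounded above, hence convergent, which is precisely $\mysum_{n \geq 0} a_{n} < + \infty$.

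I expect the only delicate part to be the careful bookkeeping of the boundary terms $a_{0}, a_{1}, a_{N}$ when re-indexing the three sums, together with the observation that the sign of $\chi_{0}$ is unrestricted, so the argument must be genuinely global (via partial sums) rather than a step-by-step comparison. A factorization of the recurrence through the roots of $t^{2} - \chi_{0} t - \chi_{1}$ is also conceivable, but the smaller root is nonpositive, which makes sign control awkward; the partial-sum route avoids this difficulty entirely.
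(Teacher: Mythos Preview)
Your proof is correct. The paper itself does not give a proof of this lemma; it simply cites \cite[Lemma 2.3]{Bot-Csetnek} and \cite[Lemma 3]{Bot-Csetnek-Laszlo}. Your partial-sum argument is a clean, self-contained alternative: summing \eqref{sum:hypo} from $n=1$ to $N$, re-indexing, and then using $S_N \leq S_{N+1}$ to close the inequality $(1-\chi_0-\chi_1)S_N \leq C$ is entirely valid, and you correctly observe that $\chi_1 \geq 0$ together with $\chi_0+\chi_1<1$ forces $\chi_0<1$, so the constant $C=a_0(1-\chi_0)+a_1+B$ is finite (in fact nonnegative). Your remark that the argument must be global because $\chi_0$ alone carries no sign restriction is also to the point.
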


\section{The algorithm}

The numerical algorithm we propose for solving \eqref{intro:pb} has the following formulation. 

\begin{algo}
	\label{algo:PALM}
	Let $\mu, \beta , \tau > 0$ and $0 < \sigma \leq 1$. For a given starting point $\left( x_{0} , y_{0} , z_{0} , u_{0} \right) \in \sR^{m} \times \sR^{q} \times \sR^{p} \times \sR^{p}$ generate the sequence $\left\lbrace \left( x_{n} , y_{n} , z_{n} , u_{n} \right) \right\rbrace _{n \geq 0}$ for  any $n \geq 0$ as follows
	\begin{subequations}
		\begin{align}
		\begin{split}
		\label{algo:PALM:y}
		y_{n+1} 	& \in \arg\min\limits_{y \in \sR^{q}} \left\lbrace G \left( y \right) + \left\langle \nabla_{y} H \left( x_{n} , y_{n} \right) , y \right\rangle + \dfrac{\mu}{2} \left\lVert y - y_{n} \right\rVert ^{2} \right\rbrace
		\end{split}
		\\
		\begin{split}
		\label{algo:PALM:z}
		z_{n+1} 	& \in \arg\min\limits_{z \in \sR^{p}} \left\lbrace F \left( z \right) + \left\langle u_{n} , Ax_{n} - z \right\rangle + \dfrac{\beta}{2} \left\lVert Ax_{n} - z \right\rVert ^{2} \right\rbrace
		\end{split}
		\\		
		\begin{split}
		\label{algo:PALM:x}
		x_{n+1} 	& := x_{n} - \tau^{-1} \left( \nabla_{x} H \left( x_{n} , y_{n+1} \right) + A^{T} u_{n} + \beta A^{T} \left( Ax_{n} - z_{n+1} \right) \right)
		\end{split}
		\\		
		\begin{split}
		\label{algo:PALM:u}
		u_{n+1} 	& := u_{n} + \sigma \beta \left( Ax_{n+1} - z_{n+1} \right).
		\end{split}		
		\end{align}
	\end{subequations}
\end{algo}
The proximal point operator with parameter $\gamma > 0$ (see \cite{Moreau}) of a proper and lower semicontinuous function $\psi \colon \sR^{d} \to \sR \cup \left\lbrace + \infty \right\rbrace$ is the set-valued operator defined as 
\begin{equation*}
\prox_{\gamma \psi} : \sR^{d} \rightarrow 2^{\sR^{d}}, \quad  \prox_{\gamma \psi} \left( x \right) = \arg\min\limits_{y \in \sR^{d}} \left\lbrace \psi \left( y \right) + \dfrac{1}{2 \gamma} \left\lVert x - y \right\rVert ^{2} \right\rbrace.
\end{equation*}
Exact formulas for the proximal operator are available not only for large classes of convex functions (\cite{bauschke-book,Beck,Combettes-Wajs}), but also for various nonconvex functions  (\cite{Attouch-Bolte-Redont-Soubeyran, Hare-Sagastizabal, Lewis-Malick}).
In view of the above definition, the iterative scheme \eqref{algo:PALM:y} - \eqref{algo:PALM:u} reads for
every $n \geq 0$
\begin{align*}
\begin{split}
y_{n+1}		& \in \prox_{\mu^{-1} G} \left( y_{n} - \mu^{-1} \nabla_{y} H \left( x_{n} , y_{n} \right) \right)  \\
z_{n+1} 	& \in \prox_{\beta^{-1} F} \left( Ax_{n} + \beta^{-1} u_{n} \right) \\
x_{n+1} 	& := x_{n} - \tau^{-1} \left( \nabla_{x} H \left( x_{n} , y_{n+1} \right) + A^{T} u_{n} + \beta A^{T} \left( Ax_{n} - z_{n+1} \right) \right)\\
u_{n+1} 	& := u_{n} + \sigma \beta \left( Ax_{n+1} - z_{n+1} \right).
\end{split}
\end{align*}
One can notice the full splitting character of Algorithm \ref{algo:PALM}  and also that the first two steps can be performed in parallel.

\begin{rmk}\label{remark1}
\begin{enumerate}
\item In case $G(y) = 0$ and $H(x,y) = H(x)$ for any $(x,y) \in \R^m \times \R^q$, where $H : \R^m \rightarrow \R$ is a Fr\'{e}chet differentiable function with Lipschitz continuous gradient, Algorithm \ref{algo:PALM} gives rise to an iterative scheme for solving \eqref{intro:pb:particular} (see also \cite{Bot-Nguyen}) that reads for any $n \geq 0$
\begin{align*}
			\begin{split}
			z_{n+1} 	& \in \prox_{\beta^{-1} F} \left( Ax_{n} + \beta^{-1} u_{n} \right)
			\end{split}
			\\		
			\begin{split}
			x_{n+1} 	& := x_{n} - \tau^{-1} \left( \nabla H \left( x_{n} \right) + A^{T} u_{n} + \beta A^{T} \left( Ax_{n} - z_{n+1} \right) \right)
			\end{split}
			\\		
			\begin{split}
			u_{n+1} 	& := u_{n} + \sigma \beta \left( Ax_{n+1} - z_{n+1} \right).
			\end{split}		
			\end{align*}

\item In case $m=p$ and $A=\Id$ is the identity operator on $\R^m$, Algorithm \ref{algo:PALM} gives rise to an iterative scheme for solving 
\begin{equation}\label{A:Id}
\min\limits_{\left( x , y \right) \in \sR^{m} \times \sR^{q}} \left\lbrace F \left(x \right) + G \left( y \right) + H \left( x , y \right) \right\rbrace,
\end{equation}
 which reads for any $n \geq 0$
\begin{align*}
y_{n+1} & \in \prox_{\mu^{-1} G} \left( y_{n} - \mu^{-1} \nabla_{y} H \left( x_{n} , y_{n} \right) \right) \\
z_{n+1} 	& \in \prox_{\beta^{-1} F} \left(x_{n} + \beta^{-1} u_{n} \right) \\
x_{n+1} 	& := x_{n} - \tau^{-1} \left( \nabla_{x} H \left( x_{n} , y_{n+1} \right) + u_{n} + \beta \left( x_{n} - z_{n+1} \right) \right)\\
u_{n+1} 	& := u_{n} + \sigma \beta \left( x_{n+1} - z_{n+1} \right).
\end{align*}
We notice that, similar to PALM (\cite{Bolte-Sabach-Teboulle:MP}), which is also designed to solve optimization problems of the form \eqref{A:Id}, the algorithm evaluates $F$ and $G$ by proximal steps, while $H$ is evaluated by gradient steps for each of the two blocks.

\item In case $m=p$, $A=\Id$, $F(x) = 0$ and $H(x,y) = H(y)$ for any $(x,y) \in \R^m \times \R^q$, where $H : \R^q \rightarrow \R$ is a Fr\'{e}chet differentiable function with Lipschitz continuous gradient, Algorithm \ref{algo:PALM} gives rise to an iterative scheme for solving
\begin{equation}\label{simple}
\min\limits_{y \in \sR^{q}} \left\lbrace G(y) + H \left( y \right) \right\rbrace,
\end{equation}
which reads for any $n \geq 0$
\begin{align*}
y_{n+1} & \in \prox_{\mu^{-1} G} \left( y_{n} - \mu^{-1} \nabla H(y_n) \right),
\end{align*}
and is nothing else than the proximal-gradient method. An inertial version of the proximal-gradient method for solving \eqref{simple} in the fully nonconvex setting has been considered in \cite{Bot-Csetnek-Laszlo}.
\end{enumerate}
\end{rmk}

\subsection{A descent inequality}

We will start with the convergence analysis of Algorithm \eqref{algo:PALM} by proving a descent inequality, which will play a fundamental role in our investigations. We will analyse Algorithm \eqref{algo:PALM} under the following assumptions, which we will be later even weakened.

\begin{assume}
	\label{as:bound}
	\begin{enumerate}
		\item
		\label{as:bound:i} the functions $F, G$ and $H$ are bounded from below;
		
		\item 
		\label{as:bound:iv}
		the linear operator $A$ is surjective;

		\item 
		\label{as:bound:ii}
		\begin{subequations}
			for any fixed $y \in \sR^{q}$ there exists $\ell_1(y) \geq 0$ such that
			\begin{equation}
			\label{as:lip-xx}
			\left\lVert \nabla_{x} H \left( x , y \right) - \nabla_{x} H \left( x' , y \right) \right\rVert \leq \ell_{1} \left( y \right) \left\lVert x - x' \right\rVert \qquad \forall x , x' \in \sR^{m},
			\end{equation}
			and for any fixed $x \in \sR^{m}$ there exist $\ell_2(x), \ell_3(x) \geq 0$ such that
			\begin{align}
			\begin{split}
			\label{as:lip-yy}
			\left\lVert \nabla_{y} H \left( x , y \right) - \nabla_{y} H \left( x , y' \right) \right\rVert & \leq \ell_{2} \left( x \right) \left\lVert y - y' \right\rVert \qquad \forall y , y' \in \sR^{q} ,
			\end{split}
			\\
			\begin{split}
			\label{as:lip-xy}
			\left\lVert \nabla_{x} H \left( x , y \right) - \nabla_{x} H \left( x , y' \right) \right\rVert & \leq \ell_{3} \left( x \right) \left\lVert y - y' \right\rVert \qquad \forall y , y' \in \sR^{q};
			\end{split}
			\end{align}
		\end{subequations}
		
		\item 
		\label{as:bound:iii}
		there exist $\ell_{i,+} > 0, i=1,2,3,$ such that
		\begin{equation}
		\label{as:sup}
		\sup\limits_{n \geq 0} \ell_{1} \left( y_{n} \right) \leq \ell_{1,+} , \qquad \sup\limits_{n \geq 0} \ell_{2} \left( x_{n} \right) \leq \ell_{2,+} , \qquad \sup\limits_{n \geq 0} \ell_{3} \left( x_{n} \right) \leq \ell_{3,+}.
		\end{equation}

	\end{enumerate}
\end{assume}

\begin{rmk}
	Some comments on Assumption \ref{as:bound} are in order.
	\begin{enumerate}
		\item
Assumption \ref{as:bound:i} ensures that the sequence generated by Algorithm \ref{algo:PALM} is well-defined. It has also as consequence that
		\begin{equation}
		\label{defi:Psi-inf}
		\underline{\Psi} := \inf\limits_{\left( x , y , z \right) \times \sR^{m} \times \sR^{q} \times \sR^{p}} \left\lbrace F \left( z \right) + G \left( y \right) + H \left( x , y \right) \right\rbrace > - \infty .
		\end{equation}
		
		\item
		Comparing the assumptions in (iii) and (iv) to the ones in  \cite{Bolte-Sabach-Teboulle:MP}, one can notice the presence of the additional condition \eqref{as:lip-xy}, which is essential in particular when proving the boundedness of the sequence of generated iterates. Notice that in iterative schemes of gradient type, proximal-gradient type or forward-backward-forward  type (see \cite{Bolte-Sabach-Teboulle:MP,Bot-Csetnek,Bot-Csetnek-Laszlo}) the boundedness of the iterates follow by combining a descent inequality expressed in terms of the objective function with coercivity assumptions on the later. In our setting this undertaken is less simple, since the descent inequality which we obtain below is in terms of the augmented Lagrangian associated with problem \eqref{intro:pb}.

\item 
		The linear operator $A$ is surjective if and only if its associated matrix has full row rank, which is the same with the fact that the matrix associated to $A A^{T}$ is positively definite. Since
		\begin{equation*}
		\lambda_{\min} \left( A A^{T} \right) \left\lVert z \right\rVert ^{2} \leq \langle A A^{T} z, z \rangle = \left\lVert A^{T} z \right\rVert ^{2} \ \forall z \in \sR^{p},
		\end{equation*}
		this is further equivalent to $\lambda_{\min} \left( A A^{T} \right) > 0$, where $\lambda_{\min} \left(M\right)$ denotes the minimal eigenvalue of a square matrix $M$.
		We also denote by $\kappa (M)$ the condition number of $M$, namely the ratio between the maximal eigenvalue $\lambda_{\max} (M)$ and the minimal eigenvalue of the square matrix $M$,
		\begin{equation*}
		\kappa \left( M \right) := \dfrac{\lambda_{\max} \left( M \right)}{\lambda_{\min} \left( M \right)} = \dfrac{\left\lVert M \right\rVert^{2}}{\lambda_{\min} \left( M\right)} \geq 1.
		\end{equation*}
Here, $\|M\|$ denotes the operator norm of $M$ induced by the Euclidean vector norm.
	\end{enumerate}
\end{rmk}

The convergence analysis will make use of the following regularized augmented Lagrangian function
\begin{equation*}
\Psi \colon \sR^{m} \times \sR^{q} \times \sR^{p} \times \sR^{p} \times \sR^{m} \times \sR^{p} \to \sR \cup \left\lbrace + \infty \right\rbrace,
\end{equation*}
defined as
\begin{align*}
\left( x , y , z , u , x' , u' \right) \mapsto & \ F \left( z \right) + G \left( y \right) + H \left( x , y \right) + \left\langle u , Ax - z \right\rangle + \dfrac{\beta}{2} \left\lVert Ax - z \right\rVert ^{2} \nonumber \\
& \ + \CPsiu \left\lVert A^{T} \left( u - u' \right) + \sigma B \left( x - x' \right) \right\rVert ^{2} + \CPsix \left\lVert x - x' \right\rVert ^{2}, 
\end{align*}
where
\begin{equation*}
B := \tau \Id - \beta A^{T} A , \qquad	\CPsiu := \dfrac{4 \left( 1 - \sigma \right)}{\sigma^{2} \beta \lambda_{\min} \left( AA^{T} \right)} \geq 0 \qquad \textrm{ and } \qquad \CPsix := \dfrac{8 \left( \sigma \tau + \ell_{1,+} \right) ^{2}}{\sigma \beta \lambda_{\min} \left( AA^{T} \right)} >0.
\end{equation*}
Notice that
\begin{align*}
\begin{split}
\left\lVert B \right\rVert \leq\tau,
\end{split}
\end{align*}
whenever $2 \tau \geq \beta \left\lVert A \right\rVert ^{2}$. Indeed, this is a consequence of the relation 
$$\|Bx\|^2=\tau^2\|x\|^2-2\tau\beta\|Ax\|^2+\beta^2\|A^TAx\|^2\leq \tau^2\|x\|^2+\beta(\beta\|A\|^2-2\tau)\|Ax\|^2  \ \forall x\in\R^m.$$
For simplification, we introduce the following notations
\begin{align*}
\bR 		& := \sR^{m} \times \sR^{q} \times \sR^{p} \times \sR^{p} \times \sR^{m} \times \sR^{p} \\
\X 			& := \left( x , y , z , u , x' , u' \right) \\
\X_{n} 		& := \left( x_{n} , y_{n} , z_{n} , u_{n} , x_{n-1} , u_{n-1} \right) \ \forall n \geq 1\\
\Psi_{n} 	& := \Psi \left( \X_{n} \right) \ \forall n \geq 1.
\end{align*}

The next result provides the announced descent inequality.

\begin{lem}
	\label{lem:dec}
Let Assumption \ref{as:bound} be satisfied,  $2 \tau \geq \beta \left\lVert A \right\rVert ^{2}$ and $\left\lbrace \left( x_{n} , y_{n} , z_{n} , u_{n} \right) \right\rbrace _{n \geq 0}$ be a sequence generated by Algorithm \ref{algo:PALM}. Then for any $n \geq 1$ it holds
	\begin{equation}
	\label{dec:inq}
	\Psi_{n+1} + \Cprex \left\lVert x_{n+1} - x_{n} \right\rVert ^{2} + \Cprey \left\lVert y_{n+1} - y_{n} \right\rVert ^{2} + \Cpreu \left\lVert u_{n+1} - u_{n} \right\rVert ^{2} \leq \Psi_{n},
	\end{equation}
	where
		\begin{align*}
		\begin{split}
		\Cprex 	& := \tau - \dfrac{\ell_{1,+} + \beta \left\lVert A \right\rVert ^{2}}{2} - \dfrac{4 \sigma \tau^{2}}{\beta \lambda_{\min} \left( AA^{T} \right)} - \dfrac{8 \left( \sigma \tau + \ell_{1,+} \right) ^{2}}{\sigma \beta \lambda_{\min} \left( AA^{T} \right)} ,		
		\end{split}
		\\
		\begin{split}
		\Cprey	& := \dfrac{\mu - \ell_{2,+}}{2} - \dfrac{8 \ell_{3,+}^{2}}{\sigma \beta \lambda_{\min} \left( AA^{T} \right)} ,
		\end{split}
		\\
		\begin{split}
		\Cpreu 	& := \dfrac{1}{\sigma \beta} .
		\end{split}
		\end{align*}
\end{lem}
\begin{proof}
	Let $n \geq 1$ be fixed. We will show first that
	\begin{align}
	& F \left( z_{n+1} \right) + G \left( y_{n+1} \right) + H \left( x_{n+1} , y_{n+1} \right) + \left\langle u_{n+1} , Ax_{n+1} - z_{n+1} \right\rangle + \dfrac{\beta}{2} \left\lVert Ax_{n+1} - z_{n+1} \right\rVert ^{2} \nonumber \\
	& + \left( \tau - \dfrac{\ell_{1,+} + \beta \left\lVert A \right\rVert ^{2}}{2} \right) \left\lVert x_{n+1} - x_{n} \right\rVert ^{2} + \dfrac{\mu - \ell_{2,+}}{2} \left\lVert y_{n+1} - y_{n} \right\rVert ^{2} +\dfrac{1}{\sigma \beta} \left\lVert u_{n+1} - u_{n} \right\rVert ^{2} \nonumber \\
	\leq \ & F \left( z_{n} \right) + G \left( y_{n} \right) + H \left( x_{n} , y_{n} \right) + \left\langle u_{n} , Ax_{n} - z_{n} \right\rangle + \dfrac{\beta}{2} \left\lVert Ax_{n} - z_{n} \right\rVert ^{2} + \dfrac{2}{\sigma \beta} \left\lVert u_{n+1} - u_{n} \right\rVert ^{2} 	\label{dec:pre-inq}
	\end{align}
and provide afterwards an upper estimate for the term $\left\lVert u_{n+1} - u_{n} \right\rVert ^{2}$ on the right-hand side of \eqref{dec:pre-inq}. 
	
From \eqref{algo:PALM:y} and \eqref{algo:PALM:z} we obtain
	\begin{align*}
	G \left( y_{n+1} \right) + \left\langle \nabla_{y} H \left( x_{n} , y_{n} \right) , y_{n+1} - y_{n} \right\rangle + \dfrac{\mu}{2} \left\lVert y_{n+1} - y_{n} \right\rVert ^{2} \leq G \left( y_{n} \right) 
	\end{align*}
	and
	\begin{equation*}
	F \left( z_{n+1} \right) + \left\langle u_{n} , Ax_{n} - z_{n+1} \right\rangle + \dfrac{\beta}{2} \left\lVert Ax_{n} - z_{n+1} \right\rVert ^{2} \leq F \left( z_{n} \right) + \left\langle u_{n} , Ax_{n} - z_{n} \right\rangle + \dfrac{\beta}{2} \left\lVert Ax_{n} - z_{n} \right\rVert ^{2}, 
	\end{equation*}
	respectively.
	Adding these two inequalities yields
	\begin{align}
	& \ F \left( z_{n+1} \right) + G \left( y_{n+1} \right) + \left\langle u_{n} , Ax_{n} - z_{n+1} \right\rangle + \dfrac{\beta}{2} \left\lVert Ax_{n} - z_{n+1} \right\rVert ^{2} +  \left\langle \nabla_{y} H \left( x_{n} , y_{n} \right) , y_{n+1} - y_{n} \right\rangle \nonumber\\ 
&  \  + \dfrac{\mu}{2} \left\lVert y_{n+1} - y_{n} \right\rVert ^{2} \nonumber \\
 \leq & \ F \left( z_{n} \right) + G \left( y_{n} \right) + \left\langle u_{n} , Ax_{n} - z_{n} \right\rangle + \dfrac{\beta}{2} \left\lVert Ax_{n} - z_{n} \right\rVert ^{2}. & \label{dec:1} 
	\end{align}
	On the other hand, according to the Descent Lemma we have
	\begin{align*}
	\begin{split}
	H \left( x_{n} , y_{n+1} \right) & \leq H \left( x_{n} , y_{n} \right) + \left\langle \nabla_{y} H \left( x_{n} , y_{n} \right) , y_{n+1} - y_{n} \right\rangle + \dfrac{\ell_{2} \left( x_{n} \right)}{2} \left\lVert y_{n+1} - y_{n} \right\rVert ^{2} \\
	& \leq H \left( x_{n} , y_{n} \right) + \left\langle \nabla_{y} H \left( x_{n} , y_{n} \right) , y_{n+1} - y_{n} \right\rangle + \dfrac{\ell_{2,+}}{2} \left\lVert y_{n+1} - y_{n} \right\rVert ^{2} 
	\end{split}
	\end{align*}	
	and, further, by taking into consideration \eqref{algo:PALM:x},
	\begin{align*}
	\begin{split}
	H \left( x_{n+1} , y_{n+1} \right) & \leq H \left( x_{n} , y_{n+1} \right) + \left\langle \nabla_{x} H \left( x_{n} , y_{n+1} \right) , x_{n+1} - x_{n} \right\rangle + \dfrac{\ell_{1} \left( y_{n+1} \right)}{2} \left\lVert x_{n+1} - x_{n} \right\rVert ^{2} \\
	& = H \left( x_{n} , y_{n+1} \right) - \left\langle u_{n} , Ax_{n+1} - Ax_{n} \right\rangle - \beta \left\langle Ax_{n} - z_{n+1} , Ax_{n+1} - Ax_{n} \right\rangle \\
	& \quad - \left( \tau - \dfrac{\ell_{1} \left( y_{n+1} \right)}{2} \right) \left\lVert x_{n+1} - x_{n} \right\rVert ^{2} \\
	& \leq H \left( x_{n} , y_{n+1} \right) - \left\langle u_{n} , Ax_{n+1} - Ax_{n} \right\rangle + \dfrac{\beta}{2} \left\lVert Ax_{n} - z_{n+1} \right\rVert ^{2} - \dfrac{\beta}{2} \left\lVert Ax_{n+1} - z_{n+1} \right\rVert ^{2} \\
	& \quad - \left( \tau - \dfrac{\ell_{1,+} + \beta \left\lVert A \right\rVert ^{2}}{2} \right) \left\lVert x_{n+1} - x_{n} \right\rVert ^{2} .
	\end{split}
	\end{align*}
Combining the two above estimates we get
	\begin{align}
	& \ H \left( x_{n+1} , y_{n+1} \right) + \left\langle u_{n} , Ax_{n+1} - Ax_{n} \right\rangle - \dfrac{\beta}{2} \left\lVert Ax_{n} - z_{n+1} \right\rVert ^{2} + \dfrac{\beta}{2} \left\lVert Ax_{n+1} - z_{n+1} \right\rVert ^{2} - \dfrac{\ell_{2,+}}{2} \left\lVert y_{n+1} - y_{n} \right\rVert ^{2} \nonumber \\
	& \ + \left( \tau - \dfrac{\ell_{1,+} + \beta \left\lVert A \right\rVert ^{2}}{2} \right) \left\lVert x_{n+1} - x_{n} \right\rVert ^{2} \nonumber\\
 \leq & \ H \left( x_{n} , y_{n} \right) + \left\langle \nabla_{y} H \left( x_{n} , y_{n} \right) , y_{n+1} - y_{n} \right\rangle .
	\label{dec:2}
	\end{align}		
We obtain \eqref{dec:pre-inq} after we sum up \eqref{dec:1} and \eqref{dec:2}, use \eqref{algo:PALM:u}, and add $\dfrac{2}{\sigma \beta} \left\lVert u_{n+1} - u_{n} \right\rVert^2$ to both sides of the resulting inequality.
	
Next we will focus on estimating  $\left\lVert u_{n+1} - u_{n} \right\rVert ^{2}$. 
We can rewrite \eqref{algo:PALM:x} as
\begin{align*}
\tau \left( x_{n} - x_{n+1} \right) & = \nabla_{x} H \left( x_{n} , y_{n+1} \right) + A^{T} u_{n} + \beta A^{T} \left( Ax_{n+1} - z_{n+1} \right) + \beta A^{T} A \left( x_{n} - x_{n+1} \right) \nonumber \\
& = \nabla_{x} H \left( x_{n} , y_{n+1} \right) + A^{T} u_{n} + \dfrac{1}{\sigma} A^{T} \left( u_{n+1} - u_{n} \right) + \beta A^{T} A \left( x_{n} - x_{n+1} \right),
\end{align*}
where the last equation is due to \eqref{algo:PALM:u}. After multiplying both sides by $\sigma$ and rearranging the terms, we get
\begin{equation*}
A^{T} u_{n+1} + \sigma B \left( x_{n+1} - x_{n} \right) = \left( 1 - \sigma \right) A^{T} u_{n} - \sigma \nabla_{x} H \left( x_{n} , y_{n+1} \right).
\end{equation*}
Since $n$ has been arbitrarily chosen, we also have
\begin{equation*}
A^{T} u_{n} + \sigma B \left( x_{n} - x_{n-1} \right) = \left( 1 - \sigma \right) A^{T} u_{n-1} - \sigma \nabla_{x} H \left( x_{n-1} , y_{n} \right).
\end{equation*}
Subtracting these relations and making use of the notations
	\begin{align*}
	\begin{split}
	w_{n}	& := A^{T} \left( u_{n} - u_{n-1} \right) + \sigma B \left( x_{n} - x_{n-1} \right) \\
	v_{n}	& := \sigma B \left( x_{n} - x_{n-1} \right) + \nabla_{x} H \left( x_{n-1} , y_{n} \right) - \nabla_{x} H \left( x_{n} , y_{n+1} \right) ,
	\end{split}
	\end{align*}
it yields
	\begin{equation*}
	w_{n+1} = \left( 1 - \sigma \right) w_{n} + \sigma v_{n} .
	\end{equation*}
The convexity of $\left\lVert \cdot \right\rVert ^{2}$ guarantees that (notice that $0< \sigma \leq 1$)
	\begin{equation}
	\label{dec:convex}
	\left\lVert w_{n+1} \right\rVert ^{2} \leq \left( 1 - \sigma \right) \left\lVert w_{n} \right\rVert ^{2} + \sigma \left\lVert v_{n} \right\rVert ^{2} .
	\end{equation}
In addition, from the definitions of $w_{n}$ and $v_{n}$, we obtain
	\begin{equation}
	\label{dec:pre-Wn}
	\left\lVert A^{T} \left( u_{n+1} - u_{n} \right) \right\rVert \leq \left\lVert w_{n+1} \right\rVert + \sigma \left\lVert B \right\rVert \left\lVert x_{n+1} - x_{n} \right\rVert \leq \left\lVert w_{n+1} \right\rVert + \sigma \tau \left\lVert x_{n+1} - x_{n} \right\rVert
	\end{equation}
	and
	\begin{align}
	\left\lVert v_{n} \right\rVert & \leq \sigma \left\lVert B \right\rVert \left\lVert x_{n} - x_{n-1} \right\rVert + \left\lVert \nabla_{x} H \left( x_{n-1} , y_{n} \right) - \nabla_{x} H \left( x_{n} , y_{n+1} \right) \right\rVert \nonumber \\
	& \leq \sigma \tau \left\lVert x_{n} - x_{n-1} \right\rVert + \left\lVert \nabla_{x} H \left( x_{n-1} , y_{n} \right) - \nabla_{x} H \left( x_{n} , y_{n} \right) \right\rVert + \left\lVert \nabla_{x} H \left( x_{n} , y_{n} \right) - \nabla_{x} H \left( x_{n} , y_{n+1} \right) \right\rVert \nonumber \\
	& \leq \left( \sigma \tau + \ell_{1,+} \right) \left\lVert x_{n} - x_{n-1} \right\rVert + \ell_{3,+} \left\lVert y_{n+1} - y_{n} \right\rVert 	\label{dec:pre-Vn}
	\end{align}
respectively.  Using the Cauchy-Schwarz inequality, \eqref{dec:pre-Wn} yields
	\begin{equation*}
	\dfrac{\lambda_{\min} \left( AA^{T} \right)}{2} \left\lVert u_{n+1} - u_{n} \right\rVert ^{2} \leq \dfrac{1}{2} \left\lVert A^{T} \left( u_{n+1} - u_{n} \right) \right\rVert ^{2} \leq \left\lVert w_{n+1} \right\rVert ^{2} + \sigma^{2} \tau^{2} \left\lVert x_{n+1} - x_{n} \right\rVert ^{2}
	\end{equation*}
	and  \eqref{dec:pre-Vn} yields
	\begin{equation*}
	\left\lVert v_{n} \right\rVert ^{2} \leq 2 \left( \sigma \tau + \ell_{1,+} \right) ^{2} \left\lVert x_{n} - x_{n-1} \right\rVert ^{2} + 2 \ell_{3,+}^{2} \left\lVert y_{n+1} - y_{n} \right\rVert ^{2}.
	\end{equation*}
After combining these two inequalities with \eqref{dec:convex}, we get
	\begin{align*}
	\begin{split}
	& \dfrac{\sigma \lambda_{\min} \left( AA^{T} \right)}{2} \left\lVert u_{n+1} - u_{n} \right\rVert ^{2} + \left( 1 - \sigma \right) \left\lVert w_{n+1} \right\rVert ^{2} \\
	\leq & \left( 1 - \sigma \right) \left\lVert w_{n} \right\rVert ^{2} + \sigma^{3} \tau^{2} \left\lVert x_{n+1} - x_{n} \right\rVert ^{2} + 2 \sigma \left( \sigma \tau + \ell_{1,+} \right) ^{2} \left\lVert x_{n} - x_{n-1} \right\rVert ^{2} + 2 \sigma \ell_{3,+}^{2} \left\lVert y_{n+1} - y_{n} \right\rVert ^{2} .
	\end{split}
	\end{align*}

After multiplying the above relation by $\dfrac{4}{\sigma^{2} \beta \lambda_{\min} \left( AA^{T} \right)} > 0$ and adding the resulting inequality to \eqref{dec:pre-inq} it yields
\begin{align*}\allowdisplaybreaks
	& F \left( z_{n+1} \right) + G \left( y_{n+1} \right) + H \left( x_{n+1} , y_{n+1} \right) + \left\langle u_{n+1} , Ax_{n+1} - z_{n+1} \right\rangle + \dfrac{\beta}{2} \left\lVert Ax_{n+1} - z_{n+1} \right\rVert ^{2} \nonumber \\
	& + \frac{4(1-\sigma)}{\sigma^2 \beta \lambda_{\min}(AA^T)}\|A^T(u_{n+1}-u_{n}) + \sigma B(x_{n+1}-x_n)\|^2 + \dfrac{8 \left( \sigma \tau + \ell_{1,+} \right) ^{2}}{\sigma \beta \lambda_{\min} \left( AA^{T} \right)} \|x_{n+1} - x_n\|^2 \nonumber \\
& + \!\left( \tau - \dfrac{\ell_{1,+} + \beta \left\lVert A \right\rVert ^{2}}{2}  -  \sigma^3\tau^2 - \dfrac{8 \left( \sigma \tau + \ell_{1,+} \right) ^{2}}{\sigma \beta \lambda_{\min} \left( AA^{T} \right)}  \right)\! \left\lVert x_{n+1} - x_{n} \right\rVert ^{2} \!\nonumber \\
& + \left(\dfrac{\mu - \ell_{2,+}}{2} - \dfrac{8 \ell_{3,+}^{2}}{\sigma \beta \lambda_{\min} \left( AA^{T} \right)}\right)\!\left\lVert y_{n+1} - y_{n} \right\rVert ^{2}  +\dfrac{1}{\sigma \beta} \left\lVert u_{n+1} - u_{n} \right\rVert ^{2} \nonumber\\
\leq \ & F \left( z_{n} \right) + G \left( y_{n} \right) + H \left( x_{n} , y_{n} \right) + \left\langle u_{n} , Ax_{n} - z_{n} \right\rangle + \dfrac{\beta}{2} \left\lVert Ax_{n} - z_{n} \right\rVert ^{2} \nonumber \\
& + \frac{4(1-\sigma)}{\sigma^2 \beta \lambda_{\min}(AA^T)}\|A^T(u_{n}-u_{n-1}) + \sigma B(x_{n}-x_{n-1})\|^2 + \dfrac{8 \left( \sigma \tau + \ell_{1,+} \right) ^{2}}{\sigma \beta \lambda_{\min} \left( AA^{T} \right)} \|x_{n} - x_{n-1}\|^2,
	\end{align*}
which is nothing else than \eqref{dec:inq}.
\end{proof}

The following result provides one possibility to choose the parameters in Algorithm \ref{algo:PALM}, such that all three constants $C_2, C_3$ and $C_4$ that appear in \eqref{dec:inq} are positive.

\begin{lem}
	\label{lem:pos}
	\begin{subequations}
		Let
		\begin{equation}
		\label{pos:sigma}
		0 < \sigma < \dfrac{1}{24 \kappa \left( AA^{T} \right)}
		\end{equation}
		\begin{equation}
		\label{pos:beta}
		\beta > \dfrac{\nu}{1 - 24 \sigma \kappa \left( AA^{T} \right)} \left( 4 + 3 \sigma +  \sqrt{24 + 24 \sigma + 9 \sigma^{2} - 192 \sigma \kappa \left( AA^{T} \right)} \right) > 0
		\end{equation}
		\begin{equation}
		\label{pos:tau}
		\max \left\lbrace \dfrac{\beta \left\lVert A \right\rVert ^{2}}{2} , \dfrac{\beta \lambda_{\min} \left( AA^{T} \right)}{24 \sigma} \left( 1 - \dfrac{4 \nu}{\beta} - \sqrt{\Delta_{\tau} '} \right) \right\rbrace < \tau < \dfrac{\beta \lambda_{\min} \left( AA^{T} \right)}{24 \sigma} \left( 1 - \dfrac{4 \nu}{\beta} + \sqrt{\Delta_{\tau} '} \right)
		\end{equation}
		\begin{equation}
		\label{pos:mu}
		\mu > \ell_{2,+} + \dfrac{16 \ell_{3,+}^{2}}{\sigma \beta \lambda_{\min} \left( AA^{T} \right)} > 0,
		\end{equation}
		where
		\begin{equation*}
		\nu 	:= \dfrac{4 \ell_{1,+}}{\lambda_{\min} \left( AA^{T} \right)} > 0 \qquad \textrm{ and } \qquad
		\Delta_{\tau} '  := 1 - \dfrac{8 \nu}{\beta} - \dfrac{8 \nu^{2}}{\beta^{2}} - \dfrac{6 \nu \sigma}{\beta} - 24 \sigma \kappa \left( AA^{T} \right) > 0 .
		\end{equation*}
		Then we have
		\begin{equation*}
		\min \left\lbrace \Cprex , \Cprey , \Cpreu \right\rbrace > 0 .
		\end{equation*}		
	\end{subequations}
Furthermore, there exist $\gamma_{1} , \gamma_{2} \in \sR \backslash \left\lbrace 0 \right\rbrace$ such that
\begin{equation}
\label{pos:gamma}
\dfrac{1}{\gamma_{1}} - \dfrac{\ell_{1,+}}{2 \gamma_{1}^{2}} = \dfrac{1}{\beta \lambda_{\min} \left( AA^{T} \right)} \qquad \textrm{ and } \qquad \dfrac{1}{\gamma_{2}} - \dfrac{\ell_{1,+}}{2 \gamma_{2}^{2}} = \dfrac{2}{\beta \lambda_{\min} \left( AA^{T} \right)} .
\end{equation}
\end{lem}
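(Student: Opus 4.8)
The plan is to handle the three constants separately; $\Cpreu$ and $\Cprey$ are essentially immediate, and the bulk of the work sits in $\Cprex$. Throughout I write $\lambda := \lambda_{\min}\left( AA^{T} \right) > 0$ and use $\left\lVert A \right\rVert^{2}/\lambda = \kappa\left( AA^{T} \right)$ together with $\nu = \ell_{1,+}/\lambda$. Since $\sigma, \beta > 0$, we get $\Cpreu = \frac{1}{\sigma\beta} > 0$ with no condition required. For $\Cprey$, the inequality $\Cprey > 0$ reads $\frac{\mu - \ell_{2,+}}{2} > \frac{8\ell_{3,+}^{2}}{\sigma\beta\lambda}$, which rearranges verbatim into the lower bound \eqref{pos:mu} on $\mu$; so this case follows by inspection.

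The heart of the matter is $\Cprex > 0$. First I would expand $\left( \sigma\tau + \ell_{1,+} \right)^{2}$ and collect $\Cprex$ as a quadratic polynomial in $\tau$,
\[
\Cprex = -\frac{12\sigma}{\beta\lambda}\tau^{2} + \left( 1 - \frac{16\nu}{\beta} \right)\tau - \left( \frac{\ell_{1,+}}{2} + \frac{\beta\left\lVert A \right\rVert^{2}}{2} + \frac{8\ell_{1,+}^{2}}{\sigma\beta\lambda} \right).
\]
Because the leading coefficient is negative, this is a concave parabola, positive precisely on the open interval between its two roots, provided the discriminant is positive. A direct computation of the discriminant $b^{2} - 4ac$, with $a = 12\sigma/\left( \beta\lambda \right)$, $b = 1 - 16\nu/\beta$ and $c$ the constant term above, using $\left\lVert A \right\rVert^{2}/\lambda = \kappa\left( AA^{T} \right)$ and $\ell_{1,+}/\lambda = \nu$, collapses exactly to $\Delta_{\tau}'$; the two roots are then $\tau_{\pm} = \frac{\beta\lambda}{24\sigma}\left[ \left( 1 - 16\nu/\beta \right) \pm \sqrt{\Delta_{\tau}'} \right]$, matching the endpoints displayed in \eqref{pos:tau}. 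Hence $\Cprex > 0$ for every $\tau \in \left( \tau_{-}, \tau_{+} \right)$.

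Two feasibility facts then remain. First, $\Delta_{\tau}' > 0$: viewing $\Delta_{\tau}'$ as a concave quadratic in $t := \nu/\beta$ and solving $\Delta_{\tau}' > 0$ for $t$ yields a positive threshold $t_{+}$, so that $\Delta_{\tau}' > 0 \iff t < t_{+} \iff \beta > \nu/t_{+}$; and $\nu/t_{+}$, after rationalizing the denominator and pulling the common factor $64$ out of the square root, coincides exactly with the right-hand side of \eqref{pos:beta}. Here \eqref{pos:sigma} is what guarantees $1 - 24\sigma\kappa\left( AA^{T} \right) > 0$ and that the radicand $24 + 24\sigma + 9\sigma^{2} - 192\sigma\kappa\left( AA^{T} \right)$ stays positive. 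Second, the prescribed interval \eqref{pos:tau} must be nonempty and contained in $\left( \tau_{-}, \tau_{+} \right)$: the inclusion is automatic since the lower endpoint is a maximum with $\tau_{-}$, and nonemptiness amounts to $\frac{\beta\left\lVert A \right\rVert^{2}}{2} < \tau_{+}$, i.e. $12\sigma\kappa\left( AA^{T} \right) < \left( 1 - 16\nu/\beta \right) + \sqrt{\Delta_{\tau}'}$. To close this I would use that \eqref{pos:sigma} forces $12\sigma\kappa\left( AA^{T} \right) < \tfrac12$, while \eqref{pos:beta} forces $\beta > 32\nu$ (because $4 + 3\sigma + \sqrt{\cdots} > 8$ and $1 - 24\sigma\kappa\left( AA^{T} \right) \leq 1$) and hence $1 - 16\nu/\beta > \tfrac12$; combining the two inequalities gives the required strict estimate.

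Finally, for the existence of $\gamma_{1}, \gamma_{2}$, I would substitute $s = 1/\gamma$ and observe that $\frac{1}{\gamma} - \frac{\ell_{1,+}}{2\gamma^{2}} = s - \frac{\ell_{1,+}}{2}s^{2}$ is a concave parabola in $s$ with maximum value $\frac{1}{2\ell_{1,+}}$ attained at $s = 1/\ell_{1,+}$. Each equation in \eqref{pos:gamma} is then solvable with $s \neq 0$, equivalently $\gamma \in \sR \backslash \left\lbrace 0 \right\rbrace$, as soon as its right-hand side does not exceed $\frac{1}{2\ell_{1,+}}$; the binding case is $\frac{2}{\beta\lambda} \leq \frac{1}{2\ell_{1,+}}$, i.e. $\beta \geq 4\nu$, which is comfortably implied by the bound $\beta > 32\nu$ already obtained. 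The main obstacle throughout is purely computational: confirming that the discriminant of the $\tau$-quadratic simplifies to precisely $\Delta_{\tau}'$ and that the threshold for $\Delta_{\tau}' > 0$ reproduces \eqref{pos:beta} verbatim, which demands careful algebraic bookkeeping with the substitutions $\nu$ and $\kappa\left( AA^{T} \right)$.
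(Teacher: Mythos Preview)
Your proposal is correct and follows essentially the same route as the paper: reduce $\Cprex>0$ to a quadratic inequality in $\tau$, identify its discriminant with $\Delta_\tau'$, read off the roots as the endpoints in \eqref{pos:tau}, and then check that the interval is nonempty. The only cosmetic difference is in the feasibility check $\tfrac{\beta\lVert A\rVert^{2}}{2}<\tau_{+}$: the paper deduces the stronger inequality $1-\tfrac{16\nu}{\beta}-12\sigma\kappa(AA^{T})>0$ directly from $\Delta_\tau'>0$, whereas you split it into $12\sigma\kappa(AA^{T})<\tfrac12$ and $1-\tfrac{16\nu}{\beta}>\tfrac12$ via $\beta>32\nu$; both arguments are equally elementary, and your treatment of the $\gamma_1,\gamma_2$ existence and of the $\beta$-threshold is in fact more explicit than the paper's.
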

\begin{proof}
We will prove first that $\Cprex > 0$ or, equivalently,
\begin{equation}
\label{pos:Cdecx}
-2 \Cprex = \dfrac{24 \sigma \tau^{2}}{\beta \lambda_{\min} \left( AA^{T} \right)} - 2 \left( 1 - \dfrac{16 \ell_{1,+}}{\beta \lambda_{\min} \left( AA^{T} \right)} \right) \tau + \dfrac{16 \ell_{1,+}^{2}}{\sigma \beta \lambda_{\min} \left( AA^{T} \right)} + \ell_{1,+} + \beta \left\lVert A \right\rVert ^{2} < 0 .
\end{equation}
The reduced discriminant of the quadratic function in $\tau$ in the above relation fulfils 
\begin{align}
\Delta_{\tau} ' & := \left( 1 - \dfrac{16 \ell_{1,+}}{\beta \lambda_{\min} \left( AA^{T} \right)} \right) ^{2} - \dfrac{384 \ell_{1,+}^{2}}{\beta^{2} \lambda_{\min}^{2} \left( AA^{T} \right)} - \dfrac{24 \ell_{1,+} \sigma}{\beta \lambda_{\min} \left( AA^{T} \right)} - 24 \sigma \kappa \left( AA^{T} \right) \nonumber \\
& = \left( 1 - \dfrac{4 \nu}{\beta} \right) ^{2} - \dfrac{24 \nu^{2}}{\beta^{2}} - \dfrac{6 \nu \sigma}{\beta} - 24 \sigma \kappa \left( AA^{T} \right) \nonumber \\
& = 1 - \dfrac{8 \nu}{\beta} - \dfrac{8 \nu^{2}}{\beta^{2}} - \dfrac{6 \nu \sigma}{\beta} - 24 \sigma \kappa \left( AA^{T} \right) > 0, \label{pos:dis}
\end{align}
if $\sigma$ and $\beta$ are being chosen as in \eqref{pos:sigma} and \eqref{pos:beta}, respectively.
Indeed, the inequality \eqref{pos:dis} is equivalent to
\begin{equation*}
\left( 1 - 24 \sigma \kappa \left( AA^{T} \right) \right) \beta^{2} - 2 \left( 4 + 3 \sigma \right) \nu \beta - 8 \nu^{2} > 0.
\end{equation*}
The reduced discriminant of the quadratic function in $\beta$ in the above relation  reads
\begin{equation*}
\Delta_{\beta} := \left[ \left( 4 + 3 \sigma \right) ^{2} + 8 \left( 1 - 24 \sigma \kappa \left( AA^{T} \right) \right) \right] \nu^{2} = \left[ 24 + 24 \sigma + 9 \sigma^{2} - 192 \sigma \kappa \left( AA^{T} \right) \right] \nu^{2} > 0
\end{equation*}
as $24 - 192 \sigma \kappa \left( AA^{T} \right) = 16 + 8 \left( 1 - 24 \sigma \kappa \left( AA^{T} \right) \right) > 0$ for every $\sigma$ that satisfies \eqref{pos:sigma}. Hence, for every $\sigma$ satisfying \eqref{pos:sigma} and every $\beta$ satisfying \eqref{pos:beta} it holds \eqref{pos:dis}.
Therefore, \eqref{pos:Cdecx} is satisfied for every
\begin{equation*}
\dfrac{\beta \lambda_{\min} \left( AA^{T} \right)}{24 \sigma} \left( 1 -\dfrac{4 \nu}{\beta} - \sqrt{\Delta_{\tau} '} \right) < \tau < \dfrac{\beta \lambda_{\min} \left( AA^{T} \right)}{24 \sigma} \left( 1 - \dfrac{4 \nu}{\beta} + \sqrt{\Delta_{\tau} '} \right).
\end{equation*}

It remains to verify the feasibility of $\tau$ in \eqref{pos:tau}, in other words, to prove that
\begin{equation*}
\dfrac{\beta \left\lVert A \right\rVert ^{2}}{2} < \dfrac{\beta \lambda_{\min} \left( AA^{T} \right)}{24 \sigma} \left( 1 - \dfrac{4 \nu}{\beta} + \sqrt{\Delta_{\tau} '} \right) .
\end{equation*}
This is easy to see, as, according to \eqref{pos:dis}, we have
\begin{equation*}
\dfrac{\beta \left\lVert A \right\rVert ^{2}}{2} < \dfrac{\beta \lambda_{\min} \left( AA^{T} \right)}{24 \sigma} \left( 1 - \dfrac{4 \nu}{\beta} \right) \Leftrightarrow 1 - \dfrac{4 \nu}{\beta} - 12 \sigma \kappa \left( AA^{T} \right) > 0.
\end{equation*}
The positivity of $\Cprey$ follows from the choice of $\mu$ in \eqref{pos:mu}, while, obviously,  $\Cpreu > 0$.

Finally, we notice that the reduced discriminants of the two quadratic equations in \eqref{pos:gamma} (in $\gamma_{1}$ and, respectively, $\gamma_{2}$) are
\begin{equation*}
\Delta_{\gamma_{1}} := 1 - \dfrac{2 \ell_{1,+}}{\beta \lambda_{\min} \left( AA^{T} \right)} = 1 - \dfrac{\nu}{2 \beta} \quad \textrm{ and, respectively, } \quad \Delta_{\gamma_{2}} := 1 - \dfrac{\ell_{1,+}}{\beta \lambda_{\min} \left( AA^{T} \right)} = 1 - \dfrac{\nu}{4 \beta}.
\end{equation*}
Since
\begin{equation*}
\beta > \dfrac{\nu}{1 - 24 \sigma \kappa \left( AA^{T} \right)} > \dfrac{\nu}{2},
\end{equation*} 
it follows that $\Delta_{\gamma_{1}}, \Delta_{\gamma_{2}} > 0$ and hence each of the two equations has a nonzero real solution.
\end{proof}

\begin{rmk}
Hong and Luo proved recently in \cite{Hong-Luo} linear convergence for the iterates generated by a Lagrangian-based algorithm in the convex setting, without any strong convexity assumption. To this end a certain error bound condition must hold true and the step size of the dual update, which is also assumed to depend on the error bound constants, must be taken small. It is also mentioned that  the dual step size may be cumbersome to compute unless the objective function is strongly convex. As one can see in \eqref{pos:sigma} and \eqref{pos:beta}, the step size of the dual update in our algorithm can be chosen only in dependence of the condition number of $AA^T$.
\end{rmk}

\begin{thm}
	\label{thm:lim}
Let Assumption \ref{as:bound} be satisfied and the parameters in Algorithm \ref{algo:PALM} be such that $2 \tau \geq \beta \left\lVert A \right\rVert ^{2}$ and the constants defined in Lemma \ref{lem:dec} fulfil $\min\{C_2, C_3, C_4\} >0$. If $\left\lbrace \left( x_{n} , y_{n} , z_{n} , u_{n} \right) \right\rbrace _{n \geq 0}$ is a sequence generated by Algorithm \ref{algo:PALM}, then the following statements are true:
	\begin{enumerate}
		\item 
		\label{lim:i}
		the sequence $\left\lbrace \Psi_{n} \right\rbrace_{n \geq 1}$ is bounded from below and convergent;		
		\item
		\label{lim:ii}
		\begin{equation}
		\label{lim:vanish}
		x_{n+1} - x_{n} \to 0 , \ y_{n+1} - y_{n} \to 0 , \ z_{n+1} - z_{n} \to 0 \textrm{ and } \ u_{n+1} - u_{n} \to 0 \textrm{ as } \ n \to + \infty .
		\end{equation}
	\end{enumerate}
\end{thm}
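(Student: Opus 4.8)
The whole argument is driven by the descent inequality \eqref{dec:inq} of Lemma \ref{lem:dec}. Writing
$\omega_{n} := C_{2} \lVert x_{n+1} - x_{n} \rVert^{2} + C_{3} \lVert y_{n+1} - y_{n} \rVert^{2} + C_{4} \lVert u_{n+1} - u_{n} \rVert^{2}$,
the hypothesis $\min\{C_{2},C_{3},C_{4}\} > 0$ makes $\omega_{n} \geq 0$, so \eqref{dec:inq} reads $\Psi_{n+1} + \omega_{n} \leq \Psi_{n}$ for every $n \geq 1$ and $\{\Psi_{n}\}_{n \geq 1}$ is already nonincreasing. The plan is therefore to verify that it is bounded from below, and then to apply Lemma \ref{lem:conv-pre} with $\xi_{n} := \Psi_{n}$: this will deliver at once the convergence asserted in \ref{lim:i} together with the summability $\sum_{n \geq 1} \omega_{n} < +\infty$ that drives \ref{lim:ii}.

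The real work lies in the lower boundedness of $\{\Psi_{n}\}_{n \geq 1}$. After discarding the two nonnegative regularizing summands and invoking that $F, G, H$ are bounded from below (Assumption \ref{as:bound:i}, recall \eqref{defi:Psi-inf}), the only unsigned contribution left in $\Psi_{n}$ is the coupling $\langle u_{n}, Ax_{n} - z_{n}\rangle$; completing the square shows $\langle u_{n}, Ax_{n} - z_{n}\rangle + \tfrac{\beta}{2}\lVert Ax_{n} - z_{n}\rVert^{2} \geq -\tfrac{1}{2\beta}\lVert u_{n}\rVert^{2}$, so everything reduces to bounding $\lVert u_{n}\rVert^{2}$ from above. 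I would do this in three steps: first, use the surjectivity of $A$ (Assumption \ref{as:bound:iv}) in the form $\lVert u_{n}\rVert^{2} \leq \lambda_{\min}(AA^{T})^{-1}\lVert A^{T} u_{n}\rVert^{2}$; second, exploit the identity obtained by combining \eqref{algo:PALM:x} and \eqref{algo:PALM:u} exactly as in the proof of Lemma \ref{lem:dec}, which expresses $A^{T} u_{n}$ through the quantity $w_{n} = A^{T}(u_{n} - u_{n-1}) + \sigma B(x_{n} - x_{n-1})$ (already present, squared and scaled by $C_{0}$, inside $\Psi_{n}$), the increment $x_{n} - x_{n-1}$, and a gradient $\nabla_{x} H$; third, absorb the resulting $\lVert \nabla_{x} H\rVert^{2}$ into the value of $H$ via Proposition \ref{prop:lips-semiconvex}, item \ref{prop:lips}, through \eqref{eq:bounded-below}. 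The two constants $\gamma_{1}, \gamma_{2}$ manufactured in \eqref{pos:gamma} are tailored precisely so that the coefficients $\tfrac{1}{\beta \lambda_{\min}(AA^{T})}$ and $\tfrac{2}{\beta \lambda_{\min}(AA^{T})}$ match those in front of the two gradient terms, while the uniform Lipschitz bounds \eqref{as:sup} allow replacing $\ell_{1}(\cdot)$ by $\ell_{1,+}$. Chaining these three steps will bound $\lVert u_{n}\rVert^{2}$ by the positive terms already carried in $\Psi_{n}$ plus a fixed constant, yielding $\Psi_{n} \geq \underline{\Psi} - (\text{const})$.

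Once lower boundedness is secured, Lemma \ref{lem:conv-pre} gives \ref{lim:i} and $\omega_{n} \to 0$; since $\min\{C_{2},C_{3},C_{4}\} > 0$, this forces $x_{n+1} - x_{n} \to 0$, $y_{n+1} - y_{n} \to 0$ and $u_{n+1} - u_{n} \to 0$. The last vanishing, $z_{n+1} - z_{n} \to 0$, is then purely algebraic: the dual step \eqref{algo:PALM:u} yields $Ax_{n+1} - z_{n+1} = \tfrac{1}{\sigma\beta}(u_{n+1} - u_{n}) \to 0$, and the decomposition $z_{n+1} - z_{n} = A(x_{n+1} - x_{n}) - (Ax_{n+1} - z_{n+1}) + (Ax_{n} - z_{n})$ forces $z_{n+1} - z_{n} \to 0$, each of its three summands tending to $0$. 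This completes \ref{lim:ii}.

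The hard part is entirely confined to the second paragraph. The identity for $A^{T} u_{n}$ naturally produces $\nabla_{x} H$ and $H$ evaluated at neighbouring iterates rather than exactly at $(x_{n}, y_{n})$, and reconciling these shifted indices with the precise shape of $\Psi_{n}$ and with the tuned constants $C_{0}, C_{1}, \gamma_{1}, \gamma_{2}$ so that all indefinite contributions are genuinely dominated is where the delicate bookkeeping sits; the surrounding steps are routine once it is carried out.
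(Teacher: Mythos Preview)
Your overall skeleton is right: use the descent inequality \eqref{dec:inq}, establish lower boundedness of $\{\Psi_n\}$, invoke Lemma~\ref{lem:conv-pre}, and then recover $z_{n+1}-z_n\to 0$ algebraically from \eqref{algo:PALM:u}. The last paragraph is fine and matches the paper.

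The gap is in your lower-boundedness argument. You complete the square and reduce to controlling $\lVert u_n\rVert^2$, then propose to absorb $\lVert\nabla_x H\rVert^2$ into $H$ via Proposition~\ref{prop:lips-semiconvex}\ref{prop:lips} using the constants $\gamma_1,\gamma_2$ from \eqref{pos:gamma}. But the existence of such $\gamma_1,\gamma_2$ is \emph{not} part of the hypotheses of Theorem~\ref{thm:lim}; it is an extra condition (equivalent to $\beta$ being large enough relative to $\ell_{1,+}/\lambda_{\min}(AA^T)$) that the paper adds only later, in Theorem~\ref{thm:bound}, precisely because the boundedness argument there genuinely needs it. Theorem~\ref{thm:lim} assumes only Assumption~\ref{as:bound}, $2\tau\ge\beta\lVert A\rVert^2$, and $\min\{C_2,C_3,C_4\}>0$, and it is not clear these alone force the discriminant conditions behind \eqref{pos:gamma}. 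On top of this, the index mismatch you flag (the identity produces $\nabla_x H(x_{n-1},y_n)$ while $\Psi_n$ carries $H(x_n,y_n)$) is a real obstruction to absorbing the gradient term into the regularizers $C_0\lVert w_n\rVert^2$ and $C_1\lVert x_n-x_{n-1}\rVert^2$ with the constants as given; you acknowledge this is ``the hard part'' but do not carry it out, and it is not obvious it can be done without further parameter restrictions.

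The paper's route to lower boundedness is entirely different and much lighter. It does \emph{not} try to bound $\lVert u_n\rVert^2$ pointwise. Instead it observes from \eqref{algo:PALM:u} that $Ax_n-z_n=\tfrac{1}{\sigma\beta}(u_n-u_{n-1})$, so the only unsigned piece becomes
\[
\langle u_n, Ax_n-z_n\rangle=\tfrac{1}{\sigma\beta}\langle u_n,u_n-u_{n-1}\rangle=\tfrac{1}{2\sigma\beta}\bigl(\lVert u_n\rVert^2+\lVert u_n-u_{n-1}\rVert^2-\lVert u_{n-1}\rVert^2\bigr),
\]
and hence $\Psi_n-\underline{\Psi}\ge \tfrac{1}{2\sigma\beta}(\lVert u_n\rVert^2-\lVert u_{n-1}\rVert^2)$. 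Summing telescopes, giving $\sum_{n=1}^N(\Psi_n-\underline{\Psi})\ge -\tfrac{1}{2\sigma\beta}\lVert u_0\rVert^2$ for all $N$. One then argues by contradiction: if some $\Psi_{n_0}<\underline{\Psi}$, monotonicity of $\{\Psi_n\}$ would force the partial sums to $-\infty$. This uses neither the surjectivity of $A$, nor Proposition~\ref{prop:lips-semiconvex}, nor any $\gamma_1,\gamma_2$; that is why Theorem~\ref{thm:lim} can be stated without those extra hypotheses. I would replace your second paragraph with this telescoping/contradiction argument.
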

\begin{proof}
First, we show that $\underline{\Psi}$ defined in \eqref{defi:Psi-inf} is a lower bound of $\left\lbrace \Psi_{n} \right\rbrace_{n \geq 2}$. Suppose the contrary, namely that there exists $n_ {0} \geq 2$ such that $\Psi_{n_{0}} - \underline{\Psi} < 0$. According to Lemma \ref{lem:dec}, $\left\lbrace \Psi_{n} \right\rbrace_{n \geq 1}$ is a nonincreasing sequence and thus for any $N \geq n_{0}$
	\begin{align*}
	\mysum_{n=1}^{N} \left( \Psi_{n} - \underline{\Psi} \right) \leq \mysum_{n=1}^{n_{0}-1} \left( \Psi_{n} - \underline{\Psi} \right) + \left( N - n_{0} + 1 \right) \left( \Psi_{n_{0}} - \underline{\Psi} \right) ,
	\end{align*}
	which implies that
	\begin{equation*}
	\lim\limits_{N \to + \infty} \mysum_{n=1}^{N} \left( \Psi_{n} - \underline{\Psi} \right) = - \infty .
	\end{equation*}
	On the other hand, for any $n \geq 1$ it holds
	\begin{align*}
	\begin{split}
	\Psi_{n} - \underline{\Psi} & \geq F \left( z_{n} \right) + G \left( y_{n} \right) + H \left( x_{n} , y_{n} \right) + \left\langle u_{n} , Ax_{n} - z_{n} \right\rangle - \underline{\Psi} \\
	& \geq \left\langle u_{n} , Ax_{n} - z_{n} \right\rangle = \dfrac{1}{\sigma \beta} \left\langle u_{n} , u_{n} - u_{n-1} \right\rangle = \dfrac{1}{2 \sigma \beta} \left\lVert u_{n} \right\rVert ^{2} + \dfrac{1}{2 \sigma \beta} \left\lVert u_{n} - u_{n-1} \right\rVert ^{2} - \dfrac{1}{2 \sigma \beta} \left\lVert u_{n-1} \right\rVert ^{2} .
	\end{split}
	\end{align*}
	Therefore, for any $N \geq 1$, we have
	\begin{equation*}
	\mysum_{n=1}^{N} \left( \Psi_{n} - \underline{\Psi} \right) \geq \dfrac{1}{2 \sigma \beta} \mysum_{n=1}^{N} \left\lVert u_{n} - u_{n-1} \right\rVert ^{2} + \dfrac{1}{2 \sigma \beta} \left\lVert u_{N} \right\rVert ^{2} - \dfrac{1}{2 \sigma \beta} \left\lVert u_{0} \right\rVert ^{2} \geq - \dfrac{1}{2 \sigma \beta} \left\lVert u_{0} \right\rVert ^{2} ,
	\end{equation*}
	which leads to a contradiction.
	As $\left\lbrace \Psi_{n} \right\rbrace_{n \geq 1}$ is bounded from below, we obtain from Lemma \ref{lem:conv-pre}  statement \ref{lim:i} and also that
	\begin{equation*}
	x_{n+1} - x_{n} \to 0 , \ y_{n+1} - y_{n} \to 0 \textrm{ and } \ u_{n+1} - u_{n} \to 0 \textrm{ as } \ n \to + \infty.
	\end{equation*}
Since for any $n \geq 1$ it holds
	\begin{align}	
	\begin{split}
	\label{lim:z}
	\left\lVert z_{n+1} - z_{n} \right\rVert & \leq \left\lVert A \right\rVert \left\lVert x_{n+1} - x_{n} \right\rVert + \left\lVert Ax_{n+1} - z_{n+1} \right\rVert + \left\lVert Ax_{n} - z_{n} \right\rVert \\
	& = \left\lVert A \right\rVert \left\lVert x_{n+1} - x_{n} \right\rVert + \dfrac{1}{\sigma \beta} \left\lVert u_{n+1} - u_{n} \right\rVert + \dfrac{1}{\sigma \beta} \left\lVert u_{n} - u_{n-1} \right\rVert,
	\end{split}
	\end{align}
it follows that $z_{n+1} - z_n \to 0$ as $n \to +\infty$.
\end{proof}

\begin{rmk}
Usually, for nonconvex algorithms, the fact that the sequences of differences of consecutive iterates converge to zero is shown by assuming that the generated sequences are bounded (see \cite{Bot-Nguyen,Li-Pong,Yang-Pong-Chen}). In our analysis the only ingredients for obtaining statement (ii) in Theorem \ref{thm:lim} are the descent property and Lemma \ref{lem:conv-pre}. 

As one can notice, the assumption that $\min\{C_2, C_3, C_4\} >0$ plays an essential role in our analysis. In Lemma \ref{lem:pos} we provide possible choices of the algorithm parameters, which lead to the fulfillment of this assumption. 
However, these choices depend on $\ell_{+,1}$, which, at is turn, is defined as being a finite upper bound for the sequence of Lipschitz constants $(\ell_1(y_n))_{n \geq 0}$ (see \eqref{as:sup}). This condition is definitely fulfilled when $\ell_1$ is globally bounded. 
This is for instance the case when $H$ depends only on $x$ and has a Lipschitz continous gradient (see Remark \ref{remark1}(i)), but also when $H$ depends only on $y$.
\end{rmk}

\subsection{General conditions for the boundedness of $\left\lbrace \left( x_{n} , y_{n} , z_{n} , u_{n} \right) \right\rbrace _{n \geq 0}$}

In the following we will formulate general conditions in terms of the input data of the optimization problem \eqref{intro:pb} which guarantee the boundedness of the sequence $\left\lbrace \left( x_{n} , y_{n} , z_{n} , u_{n} \right) \right\rbrace _{n \geq 0}$. Working in the setting of Theorem \ref{thm:lim},
thanks to \eqref{lim:vanish}, we have that the sequences $\left\lbrace x_{n+1} - x_{n} \right\rbrace _{n \geq 0}$, $\left\lbrace y_{n+1} - y_{n} \right\rbrace _{n \geq 0}$, $\left\lbrace z_{n+1} - z_{n} \right\rbrace _{n \geq 0}$ and $\left\lbrace u_{n+1} - u_{n} \right\rbrace _{n \geq 0}$ are bounded.  Denote
	\begin{equation*}
	s_{*} := \sup\limits_{n \geq 0} \left\lbrace \left\lVert x_{n+1} - x_{n} \right\rVert , \left\lVert y_{n+1} - y_{n} \right\rVert , \left\lVert z_{n+1} - z_{n} \right\rVert , \left\lVert u_{n+1} - u_{n} \right\rVert \right\rbrace < + \infty .
	\end{equation*}
Even though this observation does not imply immediately that $\left\lbrace \left( x_{n} , y_{n} , z_{n} , u_{n} \right) \right\rbrace _{n \geq 0}$ is bounded, this will follow under standard coercivity assumptions. Recall that a function $\psi : \sR^{d} \to \sR \cup \left\lbrace + \infty \right\rbrace$ is called coercive, if $\lim_{\left\lVert x \right\rVert \to +\infty} \psi \left( x \right) = + \infty$. 

\begin{thm}
	\label{thm:bound}
Let Assumption \ref{as:bound} be satisfied and the parameters in Algorithm \ref{algo:PALM} be such that $2 \tau \geq \beta \left\lVert A \right\rVert ^{2}$, the constants defined in Lemma \ref{lem:dec} fulfil $\min\{C_2, C_3, C_4\} >0$ and there exist $\gamma_1, \gamma_2 \in \R \setminus \{0\}$ such that \eqref{pos:gamma} holds. Suppose that one of the following conditions hold:
	\begin{enumerate}
		\item the function $H$ is coercive;
		\item the operator $A$ is invertible, and $F$ and $G$ are coercive.		
	\end{enumerate}
Then every sequence $\left\lbrace \left( x_{n} , y_{n} , z_{n} , u_{n} \right) \right\rbrace _{n \geq 0}$ generated by Algorithm \ref{algo:PALM} is bounded.
\end{thm}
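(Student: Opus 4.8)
The plan is to leverage the two conclusions already secured in Theorem \ref{thm:lim}: the sequence $\{\Psi_n\}_{n\ge 1}$ converges, hence is bounded above by some $M$, and all the increments $x_{n+1}-x_n$, $y_{n+1}-y_n$, $z_{n+1}-z_n$, $u_{n+1}-u_n$ are bounded by $s_*$. Writing out $\Psi_n \le M$ and discarding the terms that are manifestly bounded in $n$ — namely $\tfrac{\beta}{2}\|Ax_n-z_n\|^2$ (since $Ax_n-z_n=\tfrac{1}{\sigma\beta}(u_n-u_{n-1})$ by \eqref{algo:PALM:u}), $\CPsix\|x_n-x_{n-1}\|^2$, and $\CPsiu\|A^T(u_n-u_{n-1})+\sigma B(x_n-x_{n-1})\|^2$ (here the bound $\|B\|\le\tau$ is used) — the estimate reduces to
\begin{equation*}
F(z_n)+G(y_n)+H(x_n,y_n)+\langle u_n,Ax_n-z_n\rangle \le M'
\end{equation*}
for a constant $M'$ and all $n$. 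Since $F$ and $G$ are bounded from below, everything hinges on controlling the coupling term $\langle u_n,Ax_n-z_n\rangle$, and in particular on bounding the dual iterate $u_n$.

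The main obstacle, and the heart of the argument, is the boundedness of $\{u_n\}$. I would extract it from the $x$-update \eqref{algo:PALM:x}, which rearranges to
\begin{equation*}
A^T u_n = -\tau(x_{n+1}-x_n)-\nabla_x H(x_n,y_{n+1})-\beta A^T(Ax_n-z_{n+1}).
\end{equation*}
Because $A$ is surjective, $\lambda_{\min}(AA^T)\|u_n\|^2 \le \|A^T u_n\|^2$, so it suffices to bound the right-hand side. The first and third summands are controlled through $s_*$ (for the third, write $Ax_n-z_{n+1}=\tfrac{1}{\sigma\beta}(u_{n+1}-u_n)-A(x_{n+1}-x_n)$ via \eqref{algo:PALM:u}). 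The delicate summand is $\nabla_x H(x_n,y_{n+1})$, and this is exactly where the additional Lipschitz condition \eqref{as:lip-xy} enters: it gives $\|\nabla_x H(x_n,y_{n+1})\|\le\|\nabla_x H(x_n,y_n)\|+\ell_{3,+}\|y_{n+1}-y_n\|$, so that up to the bounded term $\ell_{3,+}s_*$ the task reduces to bounding $\|\nabla_x H(x_n,y_n)\|$.

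To bound the latter I would invoke Proposition \ref{prop:lips-semiconvex}\eqref{prop:lips} applied to the function $H(\cdot,y_n)$, whose gradient is $\ell_{1,+}$-Lipschitz by \eqref{as:lip-xx} and \eqref{as:sup}. Denoting by $\underline{H}$ a lower bound for $H$ (Assumption \ref{as:bound:i}), so that $\inf_x H(x,y_n)\ge\underline{H}$, and using the exponents $\gamma_1,\gamma_2$ from \eqref{pos:gamma}, inequality \eqref{eq:bounded-below} yields a bound of the form $\|\nabla_x H(x_n,y_n)\|^2 \le c_1\big(H(x_n,y_n)-\underline{H}\big)$. Combining the three estimates produces
\begin{equation*}
\|u_n\|^2 \le a\,\big(H(x_n,y_n)-\underline{H}\big)+b
\end{equation*}
for constants $a,b>0$. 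I expect this to be the crux of the proof, as it is the only place forcing simultaneously the surjectivity of $A$ and the conditions \eqref{as:lip-xy} and \eqref{pos:gamma}.

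The remainder is a bootstrap argument. Using $\|Ax_n-z_n\|\le s_*/(\sigma\beta)$ and the Cauchy--Schwarz inequality one has $\langle u_n,Ax_n-z_n\rangle\ge-\tfrac{s_*}{\sigma\beta}\|u_n\|$; substituting the bound on $\|u_n\|$ into the reduced inequality and using the lower bounds on $F$ and $G$ gives, in the single unknown $t_n:=\sqrt{H(x_n,y_n)-\underline{H}}\ge 0$, a relation of the form $t_n^2-c\,t_n\le d$. A quadratic inequality of this type forces $t_n$, hence $H(x_n,y_n)$ and in turn $\|u_n\|$, to be bounded. Finally I would split into the two cases. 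If $H$ is coercive, boundedness of $H(x_n,y_n)$ gives boundedness of $(x_n,y_n)$ directly, and then $z_n=Ax_n-\tfrac{1}{\sigma\beta}(u_n-u_{n-1})$ is bounded. If instead $A$ is invertible and $F,G$ are coercive, then returning to $\Psi_n\le M$ with $H(x_n,y_n)$ and $u_n$ now bounded shows that $F(z_n)+G(y_n)$ is bounded above; coercivity of $F$ and $G$ bounds $z_n$ and $y_n$, and invertibility of $A$ together with boundedness of $Ax_n=z_n+(Ax_n-z_n)$ bounds $x_n=A^{-1}(Ax_n)$.
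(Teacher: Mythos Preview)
Your argument is correct and proceeds along the same skeleton as the paper's: bound $\Psi_n$ from above via the descent lemma, express $A^T u$ through the $x$-update, invoke surjectivity of $A$, and absorb the $\|\nabla_x H\|^2$ contribution through Proposition~\ref{prop:lips-semiconvex}\ref{prop:lips}. The difference lies in how the coupling term $\langle u_n, Ax_n-z_n\rangle$ is handled. The paper completes the square,
\[
\langle u,Ax-z\rangle+\tfrac{\beta}{2}\|Ax-z\|^2=-\tfrac{1}{2\beta}\|u\|^2+\tfrac{\beta}{2}\|Ax-z+\tfrac{1}{\beta}u\|^2,
\]
so the bound on $\|u_{n+1}\|^2$ produces directly a term $H(x_{n+1},y_{n+1})-c\|\nabla_x H(x_{n+1},y_{n+1})\|^2$ that Proposition~\ref{prop:lips-semiconvex}\ref{prop:lips} bounds from below in one stroke; no bootstrap is needed, and the surviving $\tfrac{\beta}{2}\|Ax-z+\tfrac{1}{\beta}u\|^2$ term is what later gives boundedness of $\{u_n\}$ in case~(ii). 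You instead discard $\tfrac{\beta}{2}\|Ax_n-z_n\|^2$ as bounded, control $\langle u_n,Ax_n-z_n\rangle$ by Cauchy--Schwarz, and close the loop via a quadratic inequality in $t_n=\sqrt{H(x_n,y_n)-\underline{H}}$; this is slightly longer but perfectly valid. A second minor difference is the index alignment: the paper shifts $\nabla_x H(x_n,y_{n+1})$ to $\nabla_x H(x_{n+1},y_{n+1})$ via \eqref{as:lip-xx}, whereas you shift to $\nabla_x H(x_n,y_n)$ via \eqref{as:lip-xy}; either works.
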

\begin{proof}
	Let $n \geq 1$ be fixed. According to Lemma \ref{lem:dec} we have that 
	\begin{align}
\Psi_{1} \geq  & \ldots \geq \Psi_{n} \geq \Psi_{n+1} \nonumber \\
  \geq & \ F \left( z_{n+1} \right) + G \left( y_{n+1} \right) + H \left( x_{n+1} , y_{n+1} \right) - \dfrac{1}{2 \beta} \left\lVert u_{n+1} \right\rVert ^{2} + \dfrac{\beta}{2} \left\lVert Ax_{n+1} - z_{n+1} + \dfrac{1}{\beta} u_{n+1} \right\rVert ^{2}. \label{bound:seq}
	\end{align}
After multiplying \eqref{algo:PALM:x} by $-\tau$ and using \eqref{algo:PALM:u} it yields
	\begin{align}
	A^{T} u_{n+1} & = A^Tu_n + \sigma \beta A^T(Ax_{n+1} - z_{n+1}) = A^Tu_n + (\sigma-1) \beta A^T(Ax_{n+1}-z_{n+1}) + \beta A^T(Ax_{n+1}-z_{n+1})\nonumber \\
& = \left( 1 - \dfrac{1}{\sigma} \right) A^{T} \left( u_{n+1} - u_{n} \right) + A^Tu_n + \beta A^T(Ax_n - z_{n+1}) + \beta A^TA(x_{n+1}-x_n)\nonumber \\
& =  \left( 1 - \dfrac{1}{\sigma} \right) A^{T} \left( u_{n+1} - u_{n} \right) + (\tau \Id - \beta A^TA)(x_{n}-x_{n+1}) - \nabla_{x} H \left( x_{n} , y_{n+1} \right)\nonumber \\
&=
 \left( 1 - \dfrac{1}{\sigma} \right) A^{T} \left( u_{n+1} - u_{n} \right) + B \left( x_{n} - x_{n+1} \right) \nonumber \\
	& \qquad + \nabla_{x} H \left( x_{n+1} , y_{n+1} \right) - \nabla_{x} H \left( x_{n} , y_{n+1} \right) - \nabla_{x} H \left( x_{n+1} , y_{n+1} \right).	\label{bound:u}
	\end{align}
This implies
	\begin{align*}
	\begin{split}
	\left\lVert A^{T} u_{n+1} \right\rVert & \leq \left(\dfrac{1}{\sigma} - 1 \right) \left\lVert A \right\rVert \left\lVert u_{n+1} - u_{n} \right\rVert + \left( \tau + \ell_{1,+} \right) \left\lVert x_{n+1} - x_{n} \right\rVert + \left\lVert \nabla_{x} H \left( x_{n+1} , y_{n+1} \right) \right\rVert \\
	& \leq \left( \left( \dfrac{1}{\sigma} - 1 \right) \left\lVert A \right\rVert + \tau + \ell_{1,+} \right) s_{*} + \left\lVert \nabla_{x} H \left( x_{n+1} , y_{n+1} \right) \right\rVert .
	\end{split}
	\end{align*}
By using the Cauchy-Schwarz inequality we further obtain
	\begin{equation*}
	\lambda_{\min} \left( AA^{T} \right) \left\lVert u_{n+1} \right\rVert ^{2} \leq \left\lVert A^{T} u_{n+1} \right\rVert ^{2} \leq 2 \left( \left(\dfrac{1}{\sigma} - 1 \right) \left\lVert A \right\rVert + \tau + \ell_{1,+} \right) ^{2} s_{*}^{2} + 2 \left\lVert \nabla_{x} H \left( x_{n+1} , y_{n+1} \right) \right\rVert ^{2} .
	\end{equation*}
Multiplying the above relation by $\dfrac{1}{2 \beta \lambda_{\min} \left( AA^{T} \right)}$ and  combining it with \eqref{bound:seq}, we get
	\begin{align}
	\Psi_{1} & \geq F \left( z_{n+1} \right) + G \left( y_{n+1} \right) + H \left( x_{n+1} , y_{n+1} \right) - \dfrac{1}{\beta \lambda_{\min} \left( AA^{T} \right)} \left\lVert \nabla_{x} H \left( x_{n+1} , y_{n+1} \right) \right\rVert ^{2}  \nonumber \\
	& \qquad -  \dfrac{1}{\beta \lambda_{\min} \left( AA^{T} \right)}  \left( \left(\dfrac{1}{\sigma} - 1 \right) \left\lVert A \right\rVert + \tau + \ell_{1,+} \right) ^{2} s_{*}^{2} +  \dfrac{\beta}{2} \left\lVert Ax_{n+1} - z_{n+1} + \dfrac{1}{\beta} u_{n+1} \right\rVert ^{2}. \label{bound:use}
	\end{align}
We will prove the boundedness of $\left\lbrace \left( x_{n} , y_{n} , z_{n} , u_{n} \right) \right\rbrace _{n \geq 0}$ in each of the two scenarios.	
	\begin{enumerate}
		\item 
According to \eqref{bound:use} and Proposition \ref{prop:lips-semiconvex}, we have that for any $n \geq 1$
		\begin{align*}
		& \dfrac{1}{2} H \left( x_{n+1} , y_{n+1} \right) + \dfrac{\beta}{2} \left\lVert Ax_{n+1} - z_{n+1} + \dfrac{1}{\beta} u_{n+1} \right\rVert ^{2} \\
		\leq & \ \Psi_{1} +\dfrac{1}{\beta \lambda_{\min} \left( AA^{T} \right)}  \left( \left( \dfrac{1}{\sigma} - 1 \right) \left\lVert A \right\rVert + \tau + \ell_{1,+} \right) ^{2} s_{*}^{2} - \inf\limits_{z \in \sR^{p}} F \left( z \right) - \inf\limits_{y \in \sR^{m}} G \left( y \right)\\
& \ - \dfrac{1}{2} \inf\limits_{n \geq 1} \left\lbrace H \left( x_{n+1} , y_{n+1} \right) - \left (\dfrac{1}{\gamma_2} - \dfrac{\ell_{1,+}}{2 \gamma_2^2} \right) \left\lVert \nabla_{x} H \left( x_{n+1} , y_{n+1} \right) \right\rVert ^{2} \right\rbrace \\
\leq & \ \Psi_{1} +\dfrac{1}{\beta \lambda_{\min} \left( AA^{T} \right)}  \left( \left(\dfrac{1}{\sigma} - 1 \right) \left\lVert A \right\rVert + \tau + \ell_{1,+} \right) ^{2} s_{*}^{2} - \inf\limits_{z \in \sR^{p}} F \left( z \right) - \inf\limits_{y \in \sR^{q}} G \left( y \right) - \inf\limits_{(x,y) \in \sR^{m} \times \R^q} H \left(x, y \right)\\
 < & \ + \infty.
		\end{align*}
Since $H$ is coercive and bounded from below, it follows that $\left\lbrace \left( x_{n} , y_{n} \right) \right\rbrace _{n \geq 0}$ and 
		$\left\lbrace Ax_{n} - z_{n} + \dfrac{1}{\beta} u_{n} \right\rbrace_{n \geq 0}$ are bounded. 
As, according to \eqref{algo:PALM:u}, $\left\lbrace Ax_{n} - z_{n} \right\rbrace _{n \geq 0}$ is bounded, it follows that $\left\lbrace u_{n} \right\rbrace _{n \geq 0}$ and $\left\lbrace z_{n} \right\rbrace _{n \geq 0}$ are also bounded.
		
\item According to \eqref{bound:use} and Proposition \ref{prop:lips-semiconvex}, we have this time that for any $n \geq 1$
		\begin{align*}
		&  \ F \left( z_{n+1} \right) + G \left( y_{n+1} \right) + \dfrac{\beta}{2} \left\lVert Ax_{n+1} - z_{n+1} + \dfrac{1}{\beta} u_{n+1} \right\rVert ^{2} \\
		\leq & \  \Psi_{1} + \dfrac{1}{\beta \lambda_{\min} \left( AA^{T} \right)}  \left( \left( \dfrac{1}{\sigma} - 1 \right) \left\lVert A \right\rVert + \tau + \ell_{1,+} \right) ^{2} s_{*}^{2}\\
&  \ - \inf_{n \geq 1} \left\lbrace H \left( x_{n+1} , y_{n+1} \right) -\left (\dfrac{1}{\gamma_1} - \dfrac{\ell_{1,+}}{2 \gamma_1^2} \right) \left\lVert \nabla_{x} H \left( x_{n+1} , y_{n+1} \right) \right\rVert ^{2} \right\rbrace\\
\leq &   \  \Psi_{1} + \dfrac{1}{\beta \lambda_{\min} \left( AA^{T} \right)}  \left( \left(\dfrac{1}{\sigma} - 1 \right) \left\lVert A \right\rVert + \tau + \ell_{1,+} \right) ^{2} s_{*}^{2} - \inf\limits_{(x,y) \in \sR^{m} \times \R^q} H \left(x, y \right) < + \infty. 
		\end{align*}
		Since $F$ and $G$ are coercive and bounded from below, it follows that the sequences $\left\lbrace \left( y_{n} , z_{n} \right) \right\rbrace _{n \geq 0}$ and $\left\lbrace Ax_{n} - z_{n} + \dfrac{1}{\beta} u_{n} \right\rbrace_{n \geq 0}$ are bounded. As, according to \eqref{algo:PALM:u}, $\left\lbrace Ax_{n} - z_{n} \right\rbrace _{n \geq 0}$ is bounded, it follows that $\left\lbrace u_{n} \right\rbrace _{n \geq 0}$ and $\left\lbrace Ax_{n} \right\rbrace _{n \geq 0}$ are bounded. The fact that $A$ is invertible implies that $\left\lbrace x_{n} \right\rbrace _{n \geq 0}$ is bounded.
		\qedhere
	\end{enumerate}	
\end{proof}

\subsection{The cluster points of $\left\lbrace \left( x_{n} , y_{n} , z_{n} , u_{n} \right) \right\rbrace _{n \geq 0}$ are KKT points}

We will close this section dedicated to the convergence analysis of the sequence generated by  Algorithm \ref{algo:PALM} in a general framework by proving that any cluster point of $\left\lbrace \left( x_{n} , y_{n} , z_{n} , u_{n} \right) \right\rbrace _{n \geq 0}$ is a KKT point of the optimization problem \eqref{intro:pb}. We provided above general conditions which guarantee both the descent inequality \eqref{dec:inq}, with positive constants  $\Cprex, \Cprey$ and $\Cpreu$, and the boundedness of the generated iterates. Lemma \ref{lem:pos} and Theorem \ref{thm:bound} provide one possible setting that ensures these two fundamental properties of the convergence analysis. We do not want to restrict ourselves to this particular setting and, therefore, we will work, from now on, under the following assumptions.

\begin{assume}
	\label{as}
	\begin{enumerate}
		\item
		\label{as:i}
		the functions $F, G$ and $H$ are bounded from below;

\item 
		\label{as:iii}
		the linear operator $A$ is surjective;

\item
		\label{as:iv}
		every sequence $\left\lbrace \left( x_{n} , y_{n} , z_{n} , u_{n} \right) \right\rbrace _{n \geq 0}$ generated by the Algorithm \ref{algo:PALM} is bounded:
		
		\item
		\label{as:ii}
		$\nabla H$ is Lipschitz continuous with constant $L>0$ on a convex bounded subset $B_{1} \times B_{2} \subseteq \sR^{m} \times \sR^{q}$ containing $\left\lbrace \left( x_{n} , y_{n}  \right) \right\rbrace _{n \geq 0}$. In other words, for any $\left( x , y \right), \left( x' , y' \right) \in B_{1} \times B_{2}$ it holds
		\begin{equation}
		\label{as:lip}
		\opnorm{\left( \nabla_{x} H \left( x , y \right) - \nabla_{x} H \left( x' , y' \right) , \nabla_{y} H \left( x , y \right) - \nabla_{y} H \left( x' , y' \right) \right)} \leq L \opnorm{\left( x , y \right) - \left( x' , y' \right)} ;
		\end{equation}
		
		\item
		\label{as:v} the parameters $\mu, \beta, \tau >0$ and $0 < \sigma \leq 1$ are such that $2 \tau \geq \beta \|A\|^2$ and $\min\{C_2, C_3, C_4\} >0$, where
		\begin{align*}
		\begin{split}
		\Cdecx 	& := \tau - \dfrac{L \sqrt{2} + \beta \left\lVert A \right\rVert ^{2}}{2} - \dfrac{4 \sigma \tau^{2}}{\beta \lambda_{\min} \left( AA^{T} \right)} - \dfrac{8 \left( \sigma \tau + L \sqrt{2} \right) ^{2}}{\sigma \beta \lambda_{\min} \left( AA^{T} \right)} ,		
		\end{split}
		\\
		\begin{split}
		\Cdecy	& := \dfrac{\mu - L \sqrt{2}}{2} - \dfrac{16L^{2}}{\sigma \beta \lambda_{\min} \left( AA^{T} \right)} ,
		\end{split}
		\\
		\begin{split}
		\Cdecu 	& := \dfrac{1}{\sigma \beta}.
		\end{split}
		\end{align*}
	\end{enumerate}
\end{assume}

\begin{rmk}
Being facilitated by the boundedness of the generated sequence, Assumption \ref{as} \ref{as:ii} 
not only guarantee the fulfilment of Assumption \ref{as:bound} \ref{as:bound:ii} and \ref{as:bound:iii} on a convex bounded set, but it also arises in a more natural way (see also \cite{Bolte-Sabach-Teboulle:MP}). Assumption \ref{as} \ref{as:ii} holds, for instance, if $H$ is twice continuously differentiable. In addition, as \eqref{as:lip} implies for any $\left( x , y \right), \left( x' , y' \right) \in B_{1} \times B_{2}$ that
	\begin{equation*}
	\left\lVert \nabla_{x} H \left( x , y \right) - \nabla_{x} H \left( x' , y' \right) \right\rVert + \left\lVert \nabla_{y} H \left( x , y \right) - \nabla_{y} H \left( x' , y' \right) \right\rVert \leq L \sqrt{2} \left( \left\lVert x - x' \right\rVert + \left\lVert y - y' \right\rVert \right),
	\end{equation*}
we can take
	\begin{equation}
	\label{const:new-lip}
	\ell_{1,+} = \ell_{2,+} = \ell_{3,+} := L \sqrt{2} .
	\end{equation}
As \eqref{as:lip-xx} - \eqref{as:lip-xy} are valid also on a convex bounded set, the descent inequality 
	\begin{equation}
	\label{dec:new-inq}
	\Psi_{n+1} + \Cdecx \left\lVert x_{n+1} - x_{n} \right\rVert ^{2} + \Cdecy \left\lVert y_{n+1} - y_{n} \right\rVert ^{2} + \Cdecu \left\lVert u_{n+1} - u_{n} \right\rVert ^{2} \leq \Psi_{n} \ \forall n \geq 1
	\end{equation}
remains true, for constants $\Cprex, \Cprey, \Cpreu$  taken as in Lemma \ref{lem:dec} and by taking into consideration \eqref{const:new-lip}. A possible choice of the parameters of the algorithm such that $\min \left\lbrace \Cprex , \Cprey , \Cpreu \right\rbrace > 0$ can be obtained also from Lemma \ref{lem:pos}.
\end{rmk}

The next result provide upper estimates for the limiting subgradients of the regularized function $\Psi$ at $\left( x_{n} , y_{n} , z_{n} , u_{n} \right)$ for every $n \geq 1$.
\begin{lem}
	\label{lem:sgra}
	Let Assumption \ref{as} be satisfied and $\left\lbrace \left( x_{n} , y_{n} , z_{n} , u_{n} \right) \right\rbrace _{n \geq 0}$ be a sequence generated by Algorithm \ref{algo:PALM}. Then for any $n \geq 1$ it holds
	\begin{equation}
	\label{sgr:Psi}
	D_{n} := \left( d_{x}^{n} , d_{y}^{n} , d_{z}^{n} , d_{u}^{n} , d_{x'}^{n} , d_{u'}^{n} \right) \in \partial \Psi \left( \X_{n} \right) ,	
	\end{equation}
	where
	\begin{subequations}
		\label{sgr:all}
		\begin{align}
		\begin{split}
		\label{sgr:x}
		d_{x}^{n} 	& := \nabla_{x} H \left( x_{n} , y_{n} \right) + A^{T} u_{n} + \beta A^{T} \left( Ax_{n} - z_{n} \right) + 2 \CPsix \left( x_{n} - x_{n-1} \right) \\
		& \qquad + 2\sigma \CPsiu B^{T} \left( A^{T} \left( u_{n} - u_{n-1} \right) + \sigma B \left( x_{n} - x_{n-1} \right) \right) ,
		\end{split}
		\\
		\begin{split}
		\label{sgr:y}
		d_{y}^{n} 	& := \nabla_{y} H \left( x_{n} , y_{n} \right) - \nabla_{y} H \left( x_{n-1} , y_{n-1} \right) + \mu \left( y_{n-1} - y_{n} \right) , 
		\end{split}
		\\
		\begin{split}
		\label{sgr:z}
		d_{z}^{n} 	& := u_{n-1} - u_{n} + \beta A \left( x_{n-1} - x_{n} \right) ,
		\end{split}		
		\\
		\begin{split}
		\label{sgr:u}
		d_{u}^{n} 	& := Ax_{n} - z_{n} + 2 \CPsiu A \left( A^{T} \left( u_{n} - u_{n-1} \right) + \sigma B \left( x_{n} - x_{n-1} \right) \right) , 
		\end{split}
		\\
		\begin{split}
		\label{sgr:x'}
		d_{x'}^{n} 	& := - 2\sigma \CPsiu B^{T} \left( A^{T} \left( u_{n} - u_{n-1} \right) + \sigma B \left( x_{n} - x_{n-1} \right) \right) - 2 \CPsix \left( x_{n} - x_{n-1} \right) , 
		\end{split}
		\\
		\begin{split}
		\label{sgr:u'}
		d_{u'}^{n} 	& := - 2 \CPsiu A \left( A^{T} \left( u_{n} - u_{n-1} \right) + \sigma B \left( x_{n} - x_{n-1} \right) \right) .
		\end{split}
		\end{align}
	\end{subequations}
In addition, for any $n \geq 1$ it holds
\begin{equation}
\label{sgr:inq}
\opnorm{D_{n}} \leq \Csgrx \left\lVert x_{n} - x_{n-1} \right\rVert + \Csgry \left\lVert y_{n} - y_{n-1} \right\rVert + \Csgru \left\lVert u_{n} - u_{n-1} \right\rVert,
\end{equation}
where
	\begin{align*}
	\begin{split}
	\Csgrx 	& := 2 \sqrt{2} \cdot L + \tau + \beta \left\lVert A \right\rVert + 4 \left( \sigma \tau + \left\lVert A \right\rVert \right) \sigma \tau \CPsiu + 4 \CPsix , 
	\end{split}
	\\
	\begin{split}	
	\Csgry 	& := L \sqrt{2} + \mu ,
	\end{split}
	\\
	\begin{split}
	\Csgru 	& := 1 + \dfrac{1}{\sigma \beta} + \left( \dfrac{2}{\sigma} - 1 \right) \left\lVert A \right\rVert + 4 \left( \sigma \tau + \left\lVert A \right\rVert \right) \CPsiu \left\lVert A \right\rVert .
	\end{split}
	\end{align*}
\end{lem}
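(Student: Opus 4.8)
The plan is to prove the two assertions separately, starting with the membership \eqref{sgr:Psi}. I would decompose $\Psi = \Phi + \Theta$, where $\Theta(x,y,z,u,x',u') := F(z) + G(y)$ gathers the two nonsmooth summands and $\Phi$ is the remainder, which is continuously differentiable on all of $\bR$ (its $H$-part is $C^1$ because $\nabla H$ is Lipschitz, and the rest is polynomial). Because $F$ sees only the block $z$ and $G$ only the block $y$, the limiting subdifferential of the separable sum factorizes as $\partial\Theta(\X_n) = \{0\}\times\partial G(y_n)\times\partial F(z_n)\times\{0\}\times\{0\}\times\{0\}$, and the sum rule for a $C^1$ perturbation quoted in the preliminaries gives $\partial\Psi(\X_n) = \nabla\Phi(\X_n) + \partial\Theta(\X_n)$. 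I would then compute the six partial gradients of $\Phi$ at $\X_n$; writing $P_n := A^T(u_n - u_{n-1}) + \sigma B(x_n - x_{n-1})$, these reproduce verbatim the vectors $d_x^n$, $d_u^n$, $d_{x'}^n$, $d_{u'}^n$ and the $H$-terms of $d_y^n$, $d_z^n$.

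To supply the $\partial G(y_n)$- and $\partial F(z_n)$-parts I would write the Fermat conditions of the proximal subproblems \eqref{algo:PALM:y} and \eqref{algo:PALM:z} with the index shifted down by one, namely $-\nabla_y H(x_{n-1},y_{n-1}) + \mu(y_{n-1}-y_n)\in\partial G(y_n)$ and $u_{n-1} + \beta(Ax_{n-1}-z_n)\in\partial F(z_n)$, and add these to the $y$- and $z$-components of $\nabla\Phi(\X_n)$, which are $\nabla_y H(x_n,y_n)$ and $-u_n-\beta(Ax_n-z_n)$. A one-line cancellation then yields exactly $d_y^n$ and $d_z^n$ as in \eqref{sgr:y}, \eqref{sgr:z}, which proves \eqref{sgr:Psi}.

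For the estimate \eqref{sgr:inq}, by \eqref{intro:norm-inq} we have $\opnorm{D_n}\le \|d_x^n\| + \|d_y^n\| + \|d_z^n\| + \|d_u^n\| + \|d_{x'}^n\| + \|d_{u'}^n\|$, so it suffices to bound the right-hand side and to read off the aggregate coefficients of $\|x_n-x_{n-1}\|$, $\|y_n-y_{n-1}\|$, $\|u_n-u_{n-1}\|$. Four of the components already stand in terms of iterate differences: $d_{x'}^n$, $d_{u'}^n$ are multiples of $P_n$ and of $x_n-x_{n-1}$, $d_z^n$ is immediate, and in $d_u^n$ I would replace $Ax_n-z_n$ by $\tfrac{1}{\sigma\beta}(u_n-u_{n-1})$ using the dual update \eqref{algo:PALM:u}; each is then controlled through $\|B\|\le\tau$ (legitimate since $2\tau\ge\beta\|A\|^2$), $\|A^TA\|=\|A\|^2$ and $\|P_n\|\le\|A\|\,\|u_n-u_{n-1}\| + \sigma\tau\,\|x_n-x_{n-1}\|$. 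For $d_y^n$ I would insert the intermediate point $(x_{n-1},y_n)$ and invoke the Lipschitz continuity of $\nabla H$ on $B_1\times B_2$ (Assumption \ref{as} \ref{as:ii}) with the constants normalized as in \eqref{const:new-lip}, splitting $\nabla_y H(x_n,y_n)-\nabla_y H(x_{n-1},y_{n-1})$ into an $x$- and a $y$-difference.

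The only term that is not visibly a combination of iterate differences is the leading block $\nabla_x H(x_n,y_n) + A^T u_n + \beta A^T(Ax_n-z_n)$ of $d_x^n$, and eliminating the stand-alone factor $A^T u_n$ is the crux. I would proceed as in the derivation of \eqref{bound:u}: combining the primal update \eqref{algo:PALM:x} for the step $x_{n-1}\mapsto x_n$ with the dual update \eqref{algo:PALM:u} and using $B=\tau\Id-\beta A^TA$ produces $A^T u_n = -\nabla_x H(x_{n-1},y_n) + (1-\tfrac{1}{\sigma})A^T(u_n-u_{n-1}) + B(x_{n-1}-x_n)$. Substituting this, together with $\beta A^T(Ax_n-z_n)=\tfrac{1}{\sigma}A^T(u_n-u_{n-1})$, rewrites the leading block as $[\nabla_x H(x_n,y_n)-\nabla_x H(x_{n-1},y_n)] + B(x_{n-1}-x_n)$ plus two separate multiples of $A^T(u_n-u_{n-1})$ with coefficients $1-\tfrac{1}{\sigma}$ and $\tfrac{1}{\sigma}$; since $0<\sigma\le1$, bounding these apart gives the factor $|1-\tfrac{1}{\sigma}|+\tfrac{1}{\sigma}=\tfrac{2}{\sigma}-1$, which is exactly the $(\tfrac{2}{\sigma}-1)\|A\|$ occurring in $\Csgru$. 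Bounding the $H$-difference by $L\sqrt{2}\,\|x_n-x_{n-1}\|$ and $B$ by $\tau$, and adding the contributions of the other five components, I would finally assemble the three coefficients $\Csgrx$, $\Csgry$, $\Csgru$. I expect the main difficulty to be purely organizational: performing the $A^T u_n$ elimination without sign errors and tracking every contribution to each of the three differences so that the constants collapse into precisely \eqref{sgr:inq}.
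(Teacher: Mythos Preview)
Your proposal is correct and follows essentially the same route as the paper: the same subdifferential decomposition via the $C^1$ sum rule and the separable structure in $(y,z)$, the same use of the shifted optimality conditions from \eqref{algo:PALM:y}--\eqref{algo:PALM:z} to produce $d_y^n$ and $d_z^n$, and the same elimination of $A^Tu_n$ in $d_x^n$ via the identity behind \eqref{bound:u} together with $Ax_n-z_n=\tfrac{1}{\sigma\beta}(u_n-u_{n-1})$. The only cosmetic difference is that the paper bounds $\beta\|A\|\|Ax_n-z_n\|$ separately rather than first substituting $\beta A^T(Ax_n-z_n)=\tfrac{1}{\sigma}A^T(u_n-u_{n-1})$, but this yields the identical contribution $(\tfrac{2}{\sigma}-1)\|A\|$ to $\Csgru$.
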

\begin{proof}
Let $n \geq 1$ be fixed. Applying the calculus rules of the limiting subdifferential we get
	\begin{subequations}
		\label{sgr:Psi-all}
		\begin{align}
		\begin{split}
		\label{sgr:Psi-x}
		\nabla_{x} \Psi \left( \X_{n} \right) 		& = \nabla_{x} H \left( x_{n} , y_{n} \right) + A^{T} u_{n} + \beta A^{T} \left( Ax_{n} - z_{n} \right) + 2 \CPsix \left( x_{n} - x_{n-1} \right) \\
		& \qquad + 2 \sigma \CPsiu B^{T} \left( A^{T} \left( u_{n} - u_{n-1} \right) + \sigma B \left( x_{n} - x_{n-1} \right) \right) ,
		\end{split}		
		\\
		\begin{split}
		\label{sgr:Psi-y}
		\partial_{y} \Psi \left( \X_{n} \right) 	& = \partial G \left( y_{n} \right) + \nabla_{y} H \left( x_{n} , y_{n} \right) , 
		\end{split}
		\\
		\begin{split}
		\label{sgr:Psi-z}
		\partial_{z} \Psi \left( \X_{n} \right) 	& = \partial F \left( z_{n} \right) - u_{n} - \beta \left( Ax_{n} - z_{n} \right) ,
		\end{split}
		\\		
		\begin{split}
		\label{sgr:Psi-u}
		\nabla_{u} \Psi \left( \X_{n} \right) 		& = Ax_{n} - z_{n} + 2 \CPsiu A \left( A^{T} \left( u_{n} - u_{n-1} \right) + \sigma B \left( x_{n} - x_{n-1} \right) \right) , 
		\end{split}
		\\
		\begin{split}
		\label{sgr:Psi-x'}
		\nabla_{x'} \Psi \left( \X_{n} \right) 	& = - 2 \sigma \CPsiu B^{T} \left( A^{T} \left( u_{n} - u_{n-1} \right) + \sigma B \left( x_{n} - x_{n-1} \right) \right) - 2 \CPsix \left( x_{n} - x_{n-1} \right) , 
		\end{split}
		\\
		\begin{split}
		\label{sgr:Psi-u'}
		\nabla_{u'} \Psi \left( \X_{n} \right) 	& = - 2 \CPsiu A \left( A^{T} \left( u_{n} - u_{n-1} \right) + \sigma B \left( x_{n} - x_{n-1} \right) \right) .
		\end{split}
		\end{align}
	\end{subequations}
Then \eqref{sgr:x} and \eqref{sgr:u} - \eqref{sgr:u'} follow directly from \eqref{sgr:Psi-x} and \eqref{sgr:Psi-u} - \eqref{sgr:Psi-u'}, respectively.
By combining \eqref{sgr:Psi-y} with the optimality criterion for \eqref{algo:PALM:y}
\begin{equation*}
0 \in \partial G \left( y_{n} \right) + \nabla_{y} H \left( x_{n-1} , y_{n-1} \right) + \mu \left( y_{n} - y_{n-1} \right) ,
\end{equation*}
we obtain \eqref{sgr:y}.
Similarly, by combining \eqref{sgr:Psi-z} with the optimality criterion for \eqref{algo:PALM:z}
\begin{equation*}
0 \in \partial F \left( z_{n} \right) - u_{n-1} - \beta \left( Ax_{n-1} - z_{n} \right),
\end{equation*}
we get \eqref{sgr:z}.

In the following we will derive the upper estimates for the components of the limiting subgradient.
From \eqref{bound:u} it follows
\begin{align*}
\left\lVert d_{x}^{n} \right\rVert \leq & \left\lVert \nabla_{x} H \left( x_{n} , y_{n} \right) + A^{T} u_{n} \right\rVert + \beta \left\lVert A \right\rVert \left\lVert Ax_{n} - z_{n} \right\rVert + 2 \left( \CPsix + \sigma^{2} \tau^{2} \CPsiu \right) \left\lVert x_{n} - x_{n-1} \right\rVert \\
&  + 2 \sigma \tau \CPsiu \left\lVert A \right\rVert \left\lVert u_{n} - u_{n-1} \right\rVert \\
\leq & \left( L \sqrt{2} + \tau + 2 \CPsix + 2 \sigma^{2} \tau^{2} \CPsiu \right) \left\lVert x_{n} - x_{n-1} \right\rVert + \left( \dfrac{2}{\sigma} - 1 + 2 \sigma \tau \CPsiu \right) \left\lVert A \right\rVert \left\lVert u_{n} - u_{n-1} \right\rVert.
\end{align*}
In addition, we have
\begin{align*}
\begin{split}
\left\lVert d_{y}^{n} \right\rVert 	& \leq L \sqrt{2} \left\lVert x_{n} - x_{n-1} \right\rVert + \left( L \sqrt{2} + \mu \right) \left\lVert y_{n} - y_{n-1} \right\rVert , \\
\left\lVert d_{z}^{n} \right\rVert 	& \leq \beta \left\lVert A \right\rVert \left\lVert x_{n} - x_{n-1} \right\rVert + \left\lVert u_{n} - u_{n-1} \right\rVert , \\
\left\lVert d_{u}^{n} \right\rVert 	& \leq 2 \sigma \tau \CPsiu \left\lVert A \right\rVert \left\lVert x_{n} - x_{n-1} \right\rVert + \left( \dfrac{1}{\sigma \beta} + 2 \CPsiu \left\lVert A \right\rVert ^{2} \right) \left\lVert u_{n} - u_{n-1} \right\rVert , \\
\left\lVert d_{x'}^{n} \right\rVert & \leq 2 \left( \sigma^{2} \tau^{2} \CPsiu + \CPsix \right) \left\lVert x_{n} - x_{n-1} \right\rVert + 2 \sigma \tau \CPsiu \left\lVert A \right\rVert \left\lVert u_{n} - u_{n-1} \right\rVert , \\
\left\lVert d_{u'}^{n} \right\rVert & \leq 2 \sigma \tau \CPsiu \left\lVert A \right\rVert \left\lVert x_{n} - x_{n-1} \right\rVert + 2 \CPsiu \left\lVert A \right\rVert ^{2} \left\lVert u_{n} - u_{n-1} \right\rVert .
\end{split}
\end{align*}
The inequality \eqref{sgr:inq} follows by combining the above relations with \eqref{intro:norm-inq}.
\end{proof}

We denote by $\Omega := \Omega \left( \left\lbrace \X_{n} \right\rbrace _{n \geq 1} \right)$ the set of cluster points of the sequence $\left\lbrace \X_{n} \right\rbrace _{n \geq 1} \subseteq \bR$, which is nonempty thanks to the boundedness of $\left\lbrace \X_{n} \right\rbrace _{n \geq 1}$.
The distance function of the set $\Omega$ is defined for any $\X \in \bR$ by $\dist \left( \X , \Omega \right) := \inf \left\lbrace \opnorm{ \X - \Y } \colon \Y \in \Omega \right\rbrace$. The main result of this section follows.
\begin{thm}
	\label{lem:clus}
	Let Assumption \ref{as} be satisfied and $\left\lbrace \left( x_{n} , y_{n} , z_{n} , u_{n} \right) \right\rbrace _{n \geq 0}$ be a sequence generated by Algorithm \ref{algo:PALM}.  The following statements are true:
	\begin{enumerate}
		\item
		\label{lem:clus:i}
		if $\left\lbrace \left( x_{n_{k}} , y_{n_{k}} , z_{n_{k}} , u_{n_{k}} \right) \right\rbrace _{k \geq 0}$ is a subsequence of $\left\lbrace \left( x_{n} , y_{n} , z_{n} , u_{n} \right) \right\rbrace _{n \geq 0}$ which converges to $\left( x_{*} , y_{*} , z_{*} , u_{*} \right)$  as $k \to +\infty$, then
		\begin{equation*}
		\lim\limits_{k \to + \infty} \Psi_{n_{k}} = \Psi \left( x_{*} , y_{*} , z_{*} , u_{*} , x_{*} , u_{*} \right);
		\end{equation*}	
		\item 
		\label{lem:clus:ii}
		it holds
\begin{align}\label{clus:KKT}
\Omega & \subseteq \crit \left( \Psi \right) \nonumber \\
& \subseteq \{\X_{*} \in \bR: - A^{T} u_{*} = \nabla_{x} H \left( x_{*} , y_{*} \right), 0 \in \partial G \left( y_{*} \right) + \nabla_{y} H \left( x_{*} , y_{*} \right), u_{*} \in \partial F \left( z_{*} \right) , z_{*} = A x_{*}\},
\end{align}
where $\X_{*} := \left( x_{*}, y_{*}, z_{*}, u_{*} , x_{*}, u_{*} \right)$;
		\item
		\label{lem:clus:iii}
		it holds $\lim\limits_{n \to +\infty} \dist \left( \X_{n} , \Omega \right) = 0$;
		\item
		\label{lem:clus:iv}
		the set $\Omega$ is nonempty, connected and compact;		
		\item
		\label{lem:clus:v}
		the function $\Psi$ takes on $\Omega$ the value $\Psi_{*} 
		= \lim\limits_{n \to +\infty} \Psi_{n}=\lim\limits_{n \to +\infty} \left\{F \left( z_{n} \right) + G \left( y_{n} \right) + H \left( x_{n} , y_{n}\right)\right\}$.
	\end{enumerate}
\end{thm}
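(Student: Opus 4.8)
The plan is to lean on two facts already in hand. First, Theorem \ref{thm:lim} gives that all consecutive differences vanish, so that $\opnorm{\X_{n+1} - \X_n} \to 0$ and, along any convergent subsequence, $x_{n-1}$ and $u_{n-1}$ share the limits of $x_n$ and $u_n$. Second, Lemma \ref{lem:sgra} together with \eqref{lim:vanish} yields $D_n \to 0$ with $D_n \in \partial \Psi(\X_n)$. Everything else is assembled from these.

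For statement \ref{lem:clus:i}, I would start from a convergent subsequence $(x_{n_k}, y_{n_k}, z_{n_k}, u_{n_k}) \to (x_*, y_*, z_*, u_*)$; since the differences vanish, $\X_{n_k} \to \X_* := (x_*, y_*, z_*, u_*, x_*, u_*)$. The continuous parts of $\Psi$ (the term $H$, the coupling inner product, the quadratic penalty, and the two regularizers, the latter two vanishing in the limit) pass to the limit directly. The only delicate point, and the main obstacle, is that $F$ and $G$ are merely lower semicontinuous, so a priori one has only $\liminf_k F(z_{n_k}) \geq F(z_*)$ and similarly for $G$. To upgrade this to convergence I would use the optimality of the proximal subproblems: testing the minimization \eqref{algo:PALM:z} defining $z_{n_k}$ against the competitor $z_*$ and passing to $\limsup$ gives $\limsup_k F(z_{n_k}) \leq F(z_*)$, and testing \eqref{algo:PALM:y} defining $y_{n_k}$ against $y_*$ gives $\limsup_k G(y_{n_k}) \leq G(y_*)$; combined with lower semicontinuity these force $F(z_{n_k}) \to F(z_*)$ and $G(y_{n_k}) \to G(y_*)$, whence $\Psi_{n_k} \to \Psi(\X_*)$.

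For statement \ref{lem:clus:ii}, the inclusion $\Omega \subseteq \crit(\Psi)$ follows by taking $\X_* \in \Omega$ with $\X_{n_k} \to \X_*$: since $D_{n_k} \in \partial \Psi(\X_{n_k})$, $D_{n_k} \to 0$, and $\Psi_{n_k} \to \Psi(\X_*)$ by \ref{lem:clus:i}, the closedness criterion for the limiting subdifferential gives $0 \in \partial \Psi(\X_*)$. For the second inclusion I would compute $\partial \Psi$ componentwise via the sum rule (smooth part plus the separable $F$ in $z$ and $G$ in $y$): at a point with $x' = x_*$ and $u' = u_*$ the regularizer arguments $A^T(u - u') + \sigma B(x - x')$ and $x - x'$ vanish, so the $x'$- and $u'$-components are identically zero, while the remaining four stationarity conditions collapse, using the $u$-component to read off $z_* = Ax_*$, precisely into the four KKT relations in \eqref{clus:KKT}.

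Statements \ref{lem:clus:iii} and \ref{lem:clus:iv} are the standard cluster-point facts for a bounded sequence with $\X_{n+1} - \X_n \to 0$. I would prove \ref{lem:clus:iii} by contradiction: a subsequence bounded away from $\Omega$ would, by boundedness, admit a further convergent sub-subsequence whose limit lies in $\Omega$, a contradiction. Nonemptiness and compactness of $\Omega$ in \ref{lem:clus:iv} are immediate (Bolzano–Weierstrass, plus the fact that a set of cluster points is closed and here bounded), and connectedness is the classical argument using $\opnorm{\X_{n+1} - \X_n} \to 0$ to exclude a separation of $\Omega$ into two disjoint compact pieces. Finally, for \ref{lem:clus:v}, convergence of $\{\Psi_n\}$ from Theorem \ref{thm:lim} together with \ref{lem:clus:i} forces $\Psi \equiv \Psi_* := \lim_n \Psi_n$ on $\Omega$; and since \eqref{algo:PALM:u} gives $Ax_n - z_n = (\sigma \beta)^{-1}(u_n - u_{n-1}) \to 0$ while $\{u_n\}$ is bounded and the two regularizers vanish, the difference $\Psi_n - \{F(z_n) + G(y_n) + H(x_n, y_n)\}$ tends to zero, yielding the final identity.
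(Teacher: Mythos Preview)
Your proposal is correct and follows essentially the same route as the paper: for \ref{lem:clus:i} both test the proximal subproblems \eqref{algo:PALM:y} and \eqref{algo:PALM:z} against $y_*$ and $z_*$ to obtain the $\limsup$ inequalities and combine with lower semicontinuity; for \ref{lem:clus:ii} both use the subgradient $D_n$ from Lemma \ref{lem:sgra} together with $D_n\to 0$, $\Psi_{n_k}\to\Psi(\X_*)$, and the closedness criterion, then read off the KKT system from $0\in\partial\Psi(\X_*)$; and \ref{lem:clus:v} is handled identically. The only difference is cosmetic: for \ref{lem:clus:iii}--\ref{lem:clus:iv} the paper simply invokes \cite[Theorem 5 and Remark 5]{Bolte-Sabach-Teboulle:MP}, whereas you sketch the standard argument directly.
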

\begin{proof}
	Let $\left( x_{*} , y_{*} , z_{*} , u_{*} \right) \in \sR^{m} \times \sR^{q} \times \sR^{p} \times \sR^{p}$ be such that the subsequence
	\begin{equation*}
	\left\lbrace \X_{n_{k}} := \left( x_{n_{k}} , y_{n_{k}} , z_{n_{k}} , u_{n_{k}} , x_{n_{k}-1} , u_{n_{k}-1} \right) \right\rbrace _{k \geq 1}
	\end{equation*}
of $\left\lbrace \X_{n} \right\rbrace _{n \geq 1}$ converges to $\X_{*} := \left( x_{*} , y_{*} , z_{*} , u_{*} , x_{*} , u_{*} \right)$.
	
	\item[(i)]
	From \eqref{algo:PALM:y} and \eqref{algo:PALM:z} we have for any $k \geq 1$
	\begin{align*}
	& G \left( y_{n_{k}} \right) + \left\langle \nabla_{y} H \left( x_{n_{k}-1} , y_{n_{k}-1} \right) , y_{n_{k}} - y_{n_{k}-1} \right\rangle + \dfrac{\mu}{2} \left\lVert y_{n_{k}} - y_{n_{k}-1} \right\rVert ^{2} \\
	\leq & \ G \left( y_{*} \right) + \left\langle \nabla_{y} H \left( x_{n_{k}-1} , y_{n_{k}-1} \right) , y_{*} - y_{n_{k}-1} \right\rangle + \dfrac{\mu}{2} \left\lVert y_{*} - y_{n_{k}-1} \right\rVert ^{2}
	\end{align*}
	and
	\begin{align*}
	& F \left( z_{n_{k}} \right) + \left\langle u_{n_{k}-1} , Ax_{n_{k}-1} - z_{n_{k}} \right\rangle + \dfrac{\beta}{2} \left\lVert Ax_{n_{k}-1} - z_{n_{k}} \right\rVert ^{2} \\
	\leq & \ F \left( z_{*} \right) + \left\langle u_{n_{k}-1} , Ax_{n_{k}-1} - z_{*} \right\rangle + \dfrac{\beta}{2} \left\lVert Ax_{n_{k-1}} - z_{*} \right\rVert ^{2} ,
	\end{align*}
	respectively. From \eqref{algo:PALM:u} and Theorem \ref{thm:lim} follows $Ax^*=z^*$. 
	Taking the limit superior as $k \to +\infty$ on both sides of the above inequalities, we get
	\begin{equation*}
	\limsup\limits_{k \to + \infty} F \left( z_{n_{k}} \right) \leq F \left( z_{*} \right) \qquad \textrm{ and } \qquad \limsup\limits_{k \to + \infty} G \left( y_{n_{k}} \right) \leq G \left( y_{*} \right)
	\end{equation*}
	which, combined with the lower semicontinuity of $F$ and $G$, lead to
	\begin{equation*}
	\lim\limits_{k \to + \infty} F \left( z_{n_{k}} \right) = F \left( z_{*} \right) \qquad \textrm{ and } \qquad \lim\limits_{k \to + \infty} G \left( y_{n_{k}} \right) = G \left( y_{*} \right) .
	\end{equation*}
	The desired statement follows thanks to the continuity of $H$.
	
	\item[(ii)]
	For the sequence $\left\lbrace D_{n} \right\rbrace _{n \geq 0}$ defined in \eqref{sgr:Psi} - \eqref{sgr:all}, we have that $D_{n_{k}} \in \partial \Psi \left( \X_{n_{k}} \right)$ for any $k \geq 1$ and $D_{n_{k}}  \to 0$ as $k \to +\infty$, while $\X_{n_{k}} \to \X_{*}$ and $\Psi_{n_{k}} \to \Psi ( \X_{*} )$ as $k \to +\infty$. The closedness criterion of the limiting subdifferential guarantees that $0 \in \partial \Psi ( \X_{*} )$ or, in other words, $\X_{*} \in \crit \left( \Psi \right)$.
	
	Choosing now an element $\X_{*} \in \crit \left( \Psi \right)$, it holds
	\begin{equation*}
	\begin{cases}
	0 	& = \nabla_{x} H \left( x_{*}, y_{*} \right) + A^{T} u_{*} + \beta A^{T} \left( A x_{*} - z_{*} \right) , \\
	0 	& \in \partial G \left( y_{*} \right) + \nabla_{y} H \left( x_{*}, y_{*} \right) , \\
	0 	& \in \partial F \left( z_{*} \right) - u_{*} - \beta \left( A x_{*} - z_{*} \right) , \\
	0 	& = A x_{*} - z_{*} , \\
	\end{cases}
	\end{equation*}
	which is further equivalent to \eqref{clus:KKT}.
	
	\item[(iii)-(iv)]
	The proof follows in the lines of the proof of Theorem 5 (ii)-(iii) in \cite{Bolte-Sabach-Teboulle:MP}, also by taking into consideration \cite[Remark 5]{Bolte-Sabach-Teboulle:MP}, according to which the properties in (iii) and (iv) are generic for sequences satisfying $\X_{n} - \X_{n-1} \to 0$ as $n \to +\infty$, which is indeed the case due to \eqref{lim:vanish}.
	
	\item[(v)]
Due to \eqref{lim:vanish} and the fact that $\left\lbrace u_{n} \right\rbrace _{n \geq 0}$ is bounded, the sequences $\left\lbrace F \left( z_{n} \right) + G \left( y_{n} \right) + H \left( x_{n} , y_{n} \right) \right\rbrace _{n \geq 0}$ and $\left\lbrace \Psi_{n} \right\rbrace _{n \geq 0}$ have the same limit
	\begin{equation*}
	\Psi_{*} = \lim\limits_{n \to +\infty} \Psi_{n} = \lim\limits_{n \to +\infty} \left\lbrace F \left( z_{n} \right) + G \left( y_{n} \right) + H \left( x_{n} , y_{n} \right) \right\rbrace .
	\end{equation*}
	The conclusion follows by taking into consideration the first two statements of this theorem.
\end{proof}

\begin{rmk}
	An element $\left( x_{*}, y_{*}, z_{*}, u_{*} \right)$ fulfilling \eqref{clus:KKT}
	is a so-called KKT point of the optimization problem \eqref{intro:pb}. Such a KKT point obviously fulfils 
	\begin{equation}
	\label{KKT:conv-sub1}
	0 \in A^{T} \partial F \left( A x_{*} \right) + \nabla_{x} H \left( x_{*} , y_{*} \right) , \qquad 0 \in \partial G \left( y_{*} \right) + \nabla_{y} H \left( x_{*} , y_{*} \right) .
	\end{equation}
If $A$ is injective, then this system of inclusions is further equivalent to
	\begin{align}
	\label{KKT:conv-sub2}
	0 & \in \partial \left( F \circ A \right) \left( x_{*} \right) + \nabla_{x} H \left( x_{*} , y_{*} \right) = \partial_{x} \left( F \circ A + H \right),  \nonumber\\
	0 & \in \partial G \left( y_{*} \right) + \nabla_{y} H \left( x_{*} , y_{*} \right) = \partial_{y} \left( G + H \right) ,
	\end{align}
	in other words, $\left( x_{*} , y_{*} \right)$ is a critical point of the optimization problem \eqref{intro:pb}.
On the other hand, if the functions $F, G$ and $H$ are convex, then, even without asking $A$ to be injective, \eqref{KKT:conv-sub1} and \eqref{KKT:conv-sub2} are equivalent, which means that $\left( x_{*} , y_{*} \right)$ is a global minimum of the optimization problem \eqref{intro:pb}.
\end{rmk}

\section{Global convergence and rates}
\label{subsec:conv-KL}

In this section we will prove global convergence for the sequence $\left\lbrace \left( x_{n} , y_{n} , z_{n} , u_{n} \right) \right\rbrace _{n \geq 0}$ generated by Algorithm \ref{algo:PALM} in the context of the Kurdyka-\Loja property and provide convergence rates for it in the context of the \Loja property.

\subsection{Global convergence under Kurdyka-\L ojasiewicz assumptions}

The origins of this notion go back to the pioneering work of Kurdyka who introduced in \cite{Kurdyka} a general form of the \Loja inequality \cite{Lojasiewicz}. An  extension to the nonsmooth setting has been proposed and studied in \cite{Bolte-Daniilidis-Lewis, Bolte-Daniilidis-Lewis-Shiota, Bolte-Daniilidis-Ley-Mazet}.

\begin{defi}
	\label{defi:Class-Phi}
	Let $\eta \in \left( 0 , + \infty \right]$. We denote by $\Phi_{\eta}$ the set of all concave and continuous functions $\varphi \colon \left[ 0 , \eta \right) \to [0,+\infty)$ which satisfy the following conditions:
	\begin{enumerate}
		\item $\varphi \left( 0 \right) = 0$;
		\item $\varphi$ is $\sC^{1}$ on $\left( 0 , \eta \right)$ and continuous at $0$;
		\item for any $s \in \left( 0 , \eta \right): \varphi ' \left( s \right) > 0$.
	\end{enumerate}
\end{defi}
\begin{defi}	
	\label{defi:KL}
	Let $\Psi \colon \sR^{d} \to \sR \cup \left\lbrace + \infty \right\rbrace$ be proper and lower 
	semicontinuous.
	\begin{enumerate}
		\item The function $\Psi$ is said to have the Kurdyka-\Loja (\KL) property at a point $\widehat{v} \in \dom \partial \Psi := \left\lbrace v \in \sR^{d} \colon \partial \Psi \left( v \right) \neq \emptyset \right\rbrace$, if there exists $\eta \in \left( 0 , + \infty \right]$, a neighborhood $V$ of $\widehat{v}$ and a function $\varphi \in \Phi_{\eta}$ such that for any
		\begin{equation*}
		\label{intro:intersection}
		v \in V \cap \left\{v \in \sR^d: \Psi \left( \widehat{v} \right) < \Psi \left( v\right) < \Psi \left( \widehat{v} \right) + \eta \right\} 
		\end{equation*}
		the following inequality holds
		\begin{equation*}
		\label{intro:KL-inequality-0}
		\varphi ' \left( \Psi \left( v \right) - \Psi \left( \widehat{v} \right) \right) \cdot \dist \left( \0 , \partial \Psi \left( v \right) \right) \geq 1 .
		\end{equation*}		
		\item If $\Psi$ satisfies the \KL \ property at each point of $\dom \partial \Psi$, then $\Psi$ is called  \KL \ function.
	\end{enumerate}
\end{defi}
The functions $\varphi$ belonging to the set $\Phi_{\eta}$ for $\eta \in \left(0 , + \infty \right]$ are called desingularization functions. The \KL \ property reveals the possibility to reparametrize the values of $\Psi$ in order to avoid flatness around the critical points. To the class of \KL \ functions belong semialgebraic, real subanalytic, uniformly convex functions and convex functions satisfying a growth condition. We refer to \cite{Attouch-Bolte,Attouch-Bolte-Redont-Soubeyran, Attouch-Bolte-Svaiter, Bolte-Daniilidis-Lewis, Bolte-Daniilidis-Lewis-Shiota, Bolte-Daniilidis-Ley-Mazet, Bolte-Sabach-Teboulle:MP} for more properties of \KL \ functions and illustrating examples.

The following result, the proof of which can be found in \cite[Lemma 6]{Bolte-Sabach-Teboulle:MP}, will play an essential role in our convergence analysis.

\begin{lem} \textbf{(Uniformized \KL \ property)}
	\label{lem:uniformized}
	Let $\Omega$ be a compact set and $\Psi \colon \sR^{d} \to \sR \cup \left\lbrace + \infty \right\rbrace$ be a proper and lower semicontinuous function. Assume that $\Psi$ is constant on $\Omega$ and satisfies the \KL \ property at each point of $\Omega$. Then there exist $\varepsilon > 0, \eta > 0$ 
	and $\varphi \in \Phi_{\eta}$ such that for any $\widehat{v} \in \Omega$ and  every element $u$ in the intersection
	\begin{equation*}
	\label{intro:intersection-all}
	\left\lbrace v \in \sR^{d} \colon \dist \left( v , \Omega \right) < \varepsilon \right\rbrace \cap  \left\{v \in \sR^d: \Psi \left( \widehat{v} \right) < \Psi \left( v\right) < \Psi \left( \widehat{v} \right) + \eta \right\} 
	\end{equation*}
	it holds
	\begin{equation*}
	\label{intro:KL-inequality}
	\varphi ' \left( \Psi \left( v \right) - \Psi \left( \widehat{v} \right) \right) \cdot \dist \left( \0 , \partial \Psi \left( v \right) \right) \geq 1 .
	\end{equation*}
\end{lem}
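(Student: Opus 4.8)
The plan is to upgrade the pointwise \KL property at the individual points of $\Omega$ into a single uniform estimate by exploiting the compactness of $\Omega$ together with the hypothesis that $\Psi$ is constant on it. Write $\zeta$ for the common value of $\Psi$ on $\Omega$. The role of this constancy is decisive: it guarantees that the threshold $\Psi(\widehat{v})$ entering the \KL inequality equals $\zeta$ for every $\widehat{v} \in \Omega$, so that the sublevel conditions $[\Psi(\widehat{v}) < \Psi(v) < \Psi(\widehat{v}) + \eta]$ do not depend on the chosen base point and can be merged.

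First I would, for each $\bar{v} \in \Omega$, invoke Definition \ref{defi:KL} to produce $\eta_{\bar{v}} > 0$, a desingularization function $\varphi_{\bar{v}} \in \Phi_{\eta_{\bar{v}}}$ and a neighborhood of $\bar{v}$ which, after shrinking, may be taken to be an open ball $B(\bar{v}, \rho_{\bar{v}})$, such that
\begin{equation*}
\varphi_{\bar{v}}' \left( \Psi(v) - \zeta \right) \cdot \dist \left( \0 , \partial \Psi(v) \right) \geq 1
\end{equation*}
on $B(\bar{v}, \rho_{\bar{v}}) \cap [\zeta < \Psi(v) < \zeta + \eta_{\bar{v}}]$. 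The balls $\{B(\bar{v}, \rho_{\bar{v}})\}_{\bar{v} \in \Omega}$ form an open cover of the compact set $\Omega$, so a finite subcover $B(v_1, \rho_1), \ldots, B(v_N, \rho_N)$ can be extracted. Since $\Omega$ is compact and contained in the open set $U := \bigcup_{i=1}^N B(v_i, \rho_i)$, a routine tube argument yields $\varepsilon > 0$ with $\{v : \dist(v, \Omega) < \varepsilon\} \subseteq U$. I would then set $\eta := \min_{1 \le i \le N} \eta_i > 0$, so that each $\varphi_i := \varphi_{v_i}$ restricts to a function on $[0, \eta)$.

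The final, and most delicate, step is to fuse the finitely many desingularizers into one admissible $\varphi \in \Phi_{\eta}$. I would take $\varphi := \sum_{i=1}^N \varphi_i$; being a finite sum of concave, continuous functions vanishing at $0$ and being $\sC^1$ with strictly positive derivative on $(0,\eta)$, it again lies in $\Phi_{\eta}$. Now for any $\widehat{v} \in \Omega$ and any $v$ in the prescribed intersection, pick $i$ with $v \in B(v_i, \rho_i)$; because $\eta \le \eta_i$ and $\Psi(\widehat{v}) = \zeta = \Psi(v_i)$, the point $v$ lies in the local region where the $\varphi_i$-inequality is valid, whence $\varphi_i'(\Psi(v) - \zeta) \dist(\0, \partial \Psi(v)) \geq 1$. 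Since all summands have nonnegative derivative, $\varphi'(\Psi(v) - \zeta) = \sum_j \varphi_j'(\Psi(v) - \zeta) \geq \varphi_i'(\Psi(v) - \zeta)$, and the uniform inequality follows. The main obstacle I anticipate is precisely this combination step: one must verify that summation preserves membership in $\Phi_{\eta}$ and that the monotone comparison $\varphi' \geq \varphi_i'$, together with the uniform value-interval (which rests entirely on $\Psi \equiv \zeta$ on $\Omega$), transfers the single-ball estimate to the whole tube; the geometric tube-in-cover argument is the other point that needs a little care.
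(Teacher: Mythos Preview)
Your argument is correct and is precisely the standard compactness-plus-summation proof of the uniformized \KL\ property. Note, however, that the paper does not supply its own proof of this lemma: it simply quotes the result and refers to \cite[Lemma~6]{Bolte-Sabach-Teboulle:MP} for the proof, and your outline coincides with the argument given there.
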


From now on we will use the following notations
\begin{equation*}
\Cmin := \dfrac{1}{\min \left\lbrace \Cdecx , \Cdecy , \Cdecu \right\rbrace} , \qquad \Cmax := \max \left\lbrace \Csgrx , \Csgry , \Csgru \right\rbrace \qquad \textrm{ and } \qquad \E_{n} := \Psi_{n} - \Psi_{*} \ \forall n \geq 1,
\end{equation*}
where $\Psi_{*} = \lim\limits_{n \to +\infty} \Psi_{n}$. 

The next result shows that if $\Psi$ is a \KL \ function, then the sequence 
$\left\lbrace \left( x_{n} , y_{n} , z_{n} , u_{n} \right) \right\rbrace _{n \geq 0}$ converges to a KKT point of the optimization 
problem \eqref{intro:pb}. This hypothesis is fulfilled if, for instance, $F, G$ and $H$ are semi-algebraic functions.

\begin{thm}
	\label{thm:conv}
	Let Assumption \ref{as} be satisfied and $\left\lbrace \left( x_{n} , y_{n} , z_{n} , u_{n} \right) \right\rbrace _{n \geq 0}$ be a sequence generated by Algorithm \ref{algo:PALM}.  If $\Psi$ is a \KL \ function, then the following statements are true: 
	\begin{enumerate}
		\item the sequence $\left\lbrace \left( x_{n} , y_{n} , z_{n} , u_{n} \right) \right\rbrace _{n \geq 0}$ has finite length, namely,
		\begin{equation}
		\label{conv:Cauchy}
		\mysum_{n \geq 0} \left\lVert x_{n+1} - x_{n} \right\rVert < + \infty , \mysum_{n \geq 0} \left\lVert y_{n+1} - y_{n} \right\rVert < + \infty , \mysum_{n \geq 0} \left\lVert z_{n+1} - z_{n} \right\rVert < + \infty , \mysum_{n \geq 0} \left\lVert u_{n+1} - u_{n} \right\rVert < + \infty;
		\end{equation}
		\item the sequence$\left\lbrace \left( x_{n} , y_{n} , z_{n} , u_{n} \right) \right\rbrace _{n \geq 0}$ converges to a KKT point of the optimization problem \eqref{intro:pb}.
	\end{enumerate}
\end{thm}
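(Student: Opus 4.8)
The plan is to run the standard Kurdyka--\Loja descent argument, for which all three ingredients are already in hand: the sufficient-decrease inequality \eqref{dec:new-inq} with positive constants $\Cdecx,\Cdecy,\Cdecu$, the relative-error (subgradient) bound \eqref{sgr:inq}, and the cluster-point analysis of Theorem \ref{lem:clus}, which supplies that $\Omega$ is nonempty and compact, that $\Psi\equiv\Psi_*$ on $\Omega$, and that $\dist(\X_n,\Omega)\to0$. First I would dispose of the trivial case: if $\Psi_{n_0}=\Psi_*$ for some $n_0$, then \eqref{dec:new-inq} forces $x_{n+1}=x_n$, $y_{n+1}=y_n$, $u_{n+1}=u_n$ for all $n\ge n_0$, and \eqref{lim:z} then forces $z_{n+1}=z_n$ as well, so the iterates are eventually stationary and both claims are immediate. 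Hence assume $\E_n:=\Psi_n-\Psi_*>0$ for all $n$; recall $\{\E_n\}$ is nonincreasing with $\E_n\to0$. Applying the uniformized \KL\ property (Lemma \ref{lem:uniformized}) on the compact set $\Omega$ yields $\varepsilon,\eta>0$ and $\varphi\in\Phi_\eta$; since $\dist(\X_n,\Omega)\to0$ and $0<\E_n\to0$, there is an $N$ so that for all $n\ge N$ the point $\X_n$ lies in the admissible intersection and therefore
\[
\varphi'(\E_n)\cdot\dist\left(\0,\partial\Psi(\X_n)\right)\ge1.
\]

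For the core estimate, set $a_n:=\left\lVert x_n-x_{n-1}\right\rVert+\left\lVert y_n-y_{n-1}\right\rVert+\left\lVert u_n-u_{n-1}\right\rVert$ and observe that $D_n\in\partial\Psi(\X_n)$, so $\dist(\0,\partial\Psi(\X_n))\le\opnorm{D_n}\le\Cmax\,a_n$ by \eqref{sgr:inq}; together with the \KL\ inequality this gives $\varphi'(\E_n)\ge 1/(\Cmax\,a_n)$ for $n\ge N$. Concavity of $\varphi$ yields
\[
\varphi(\E_n)-\varphi(\E_{n+1})\ge\varphi'(\E_n)\,(\E_n-\E_{n+1})=\varphi'(\E_n)\,(\Psi_n-\Psi_{n+1}),
\]
while \eqref{dec:new-inq} combined with the left inequality in \eqref{intro:norm-inq} (for $k=3$) gives $\Psi_n-\Psi_{n+1}\ge\frac{1}{3\Cmin}a_{n+1}^2$. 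Writing $\Delta_{n+1}:=\varphi(\E_n)-\varphi(\E_{n+1})$ and chaining these bounds produces the key recursion
\[
a_{n+1}^2\le 3\,\Cmin\Cmax\,\Delta_{n+1}\,a_n\qquad(n\ge N).
\]
The feature to watch is the \emph{index shift}: the decrease controls the step $a_{n+1}$ whereas the subgradient bound is expressed through $a_n$, which is precisely why one cannot sum directly and must pass through the product $\Delta_{n+1}a_n$.

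To extract summability I would take square roots and apply $2\sqrt{uv}\le u+v$ with $u=3\Cmin\Cmax\Delta_{n+1}$ and $v=a_n$, obtaining
\[
2a_{n+1}\le 3\,\Cmin\Cmax\,\Delta_{n+1}+a_n\qquad(n\ge N).
\]
Summing from $N$ to any $M$, telescoping the $\Delta$-terms and using $\varphi\ge0$, one is left with $\sum_{n=N}^{M}a_n\le 2a_N+3\,\Cmin\Cmax\,\varphi(\E_N)$, a bound uniform in $M$; letting $M\to\infty$ shows $\sum_n a_n<\infty$, i.e. $\sum_n\left\lVert x_{n+1}-x_n\right\rVert$, $\sum_n\left\lVert y_{n+1}-y_n\right\rVert$ and $\sum_n\left\lVert u_{n+1}-u_n\right\rVert$ are all finite. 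Finiteness of $\sum_n\left\lVert z_{n+1}-z_n\right\rVert$ then follows by summing \eqref{lim:z}, whose right-hand side is a fixed linear combination of the already-summable $x$- and $u$-increments. This establishes \eqref{conv:Cauchy}, statement (i).

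Statement (ii) is then immediate: a sequence whose consecutive increments are summable is Cauchy, so $\left\lbrace(x_n,y_n,z_n,u_n)\right\rbrace$ converges to a single limit $(x_*,y_*,z_*,u_*)$. This limit is in particular a cluster point of the sequence, hence by statement \ref{lem:clus:ii} of Theorem \ref{lem:clus} it satisfies \eqref{clus:KKT}, i.e. it is a KKT point of \eqref{intro:pb}. I expect the only genuinely delicate point to be the bookkeeping around the index shift described above (and the separate, easy treatment of the $z$-component, which never enters the descent inequality); everything else amounts to assembling estimates that the preceding lemmas have already prepared.
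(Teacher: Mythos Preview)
Your argument is correct and follows essentially the same route as the paper: the same two-case split, the same use of the uniformized \KL\ lemma on $\Omega$, the same chaining of the descent inequality \eqref{dec:new-inq} with the subgradient bound \eqref{sgr:inq} to arrive at $a_{n+1}^2 \le C\,\Delta_{n,n+1}\,a_n$, and the same finish for the $z$-increments via \eqref{lim:z}. The only cosmetic difference is the final summability step: the paper introduces a free parameter $\alpha$ in the AM--GM bound to get $a_{n+1}\le \alpha\Cmax a_n + \tfrac{3\Cmin}{4\alpha}\Delta_{n,n+1}$ and then invokes Lemma \ref{lem:conv-ext}, whereas you fix $\alpha$ implicitly via $2\sqrt{uv}\le u+v$ and telescope by hand---both yield the same conclusion.
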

\begin{proof}
	Let be $\X_{*} \in \Omega$, thus $\Psi \left( \X_{*} \right) = \Psi_{*}$. 
	Recall that $\left\lbrace \E_{n} \right\rbrace _{n \geq 1}$ is monotonically decreasing and converges to $0$ as $n \to + \infty$. We consider two cases.
	
	\item[Case 1.] Assume that there exists an integer $n' \geq 1$ such that $\E_{n'} = 0$ or, equivalently, $\Psi_{n'} = \Psi_{*}$. Due to the monotonicity of $\left\lbrace \E_{n} \right\rbrace _{n \geq 1}$, it follows that $\E_{n} = 0$ or, equivalently, $\Psi_{n} = \Psi_{*}$ for any $n \geq n'$. The inequality \eqref{dec:new-inq} yields for any $n \geq n'+1$
	\begin{equation*}
	x_{n+1} - x_{n} = 0, \ y_{n+1} - y_{n} = 0 \ \mbox{and} \ u_{n+1} - u_{n} = 0 .
	\end{equation*}
	The inequality \eqref{lim:z} gives us further $z_{n+1} - z_{n} = 0$ for any $n \geq n'+2$. This proves \eqref{conv:Cauchy}. 
	
	\item[Case 2.]
	Consider now the case when $\E_{n} > 0$ or, equivalently, $\Psi_{n} > \Psi_{*}$ for any $n \geq 1$. According to Lemma \ref{lem:uniformized}, there exist $\varepsilon > 0$, $\eta > 0$ and a desingularization function $\varphi$ such that for any element $\X$ in the intersection
	\begin{equation}
	\label{conv:intersection}
	\left\lbrace \Z \in \bR \colon \dist \left( \Z , \Omega \right) < \varepsilon \right\rbrace \cap
	\left\lbrace \Z \in \bR \colon \Psi_{*} < \Psi \left( \Z \right) < \Psi_{*} + \eta \right\rbrace
	\end{equation}
	it holds
	\begin{equation*}
	\varphi' \left( \Psi \left( \X \right) - \Psi_{*} \right) \cdot \dist \left( \0 , \partial \Psi \left( \X \right) \right) \geq 1.
	\end{equation*}
	Let be $n_{1} \geq 1$ such that for any $n \geq n_{1}$
	\begin{equation*}
	\Psi_{*} < \Psi_{n} < \Psi_{*} + \eta .
	\end{equation*}	
	Since $\lim\limits_{n \to +\infty} \dist \left( \X_{n} , \Omega \right) = 0$ (see Lemma \ref{lem:clus} \ref{lem:clus:iii}), there exists $n_{2} \geq 1$ such that for any $n \geq n_{2}$
	\begin{equation*}
	\dist \left( \X_{n} , \Omega \right) < \varepsilon .
	\end{equation*}
Consequently, $\X_n = \left( x_{n} , y_{n} , z_{n} , u_{n} , x_{n-1} , u_{n-1} \right)$ belongs to the intersection in \eqref{conv:intersection} for any $n \geq n_{0} := \max \left\lbrace n_{1} , n_{2} \right\rbrace$, which further implies
	\begin{equation}
	\label{conv:KL-property}
	\varphi ' \left( \Psi_{n} - \Psi_{*} \right) \cdot \dist \left( \0 , \partial \Psi \left( \X_{n} \right) \right) = \varphi ' \left( \E_{n} \right) \cdot \dist \left( \0 , \partial \Psi \left( \X_{n} \right) \right) \geq 1 .
	\end{equation}	
Define for two arbitrary nonnegative integers $i$ and $j$
	\begin{equation*}
	\Delta_{i,j} := \varphi \left( \Psi_{i} - \Psi_{*} \right) - \varphi \left( \Psi_{j} - \Psi_{*} \right) = \varphi \left( \E_{i} \right) - \varphi \left( \E_{j} \right) .
	\end{equation*}	
	The monotonicity of the sequence $\left\lbrace \Psi_{n} \right\rbrace _{n \geq 0}$ and of the function  $\varphi$ implies that $\Delta_{i,j} \geq 0$ for any $1 \leq i \leq j$.
	In addition, for any $N \geq n_{0} \geq 1$ it holds
	\begin{equation*}
	\mysum_{n = n_{0}}^{N} \Delta_{n,n+1} = \Delta_{n_{0},N+1} = \varphi \left( \E_{n_{0}} \right) - \varphi \left( \E_{N+1} \right) \leq  \varphi \left( \E_{n_{0}} \right),
	\end{equation*}
	from which we get $\mysum_{n \geq 1} \Delta_{n,n+1} < + \infty$.	
	
	By combining Lemma \ref{lem:dec} with the concavity of $\varphi$ we obtain for any $n \geq 1$
	\begin{align*}
	\begin{split}
	\Delta_{n,n+1} & = \varphi \left( \E_{n} \right) - \varphi \left( \E_{n+1} \right) \geq \varphi ' \left( \E_{n} \right) \left (\E_{n} - \E_{n+1} \right) = \varphi ' \left( \E_{n} \right) \left(\Psi_{n} - \Psi_{n+1} \right) \\
	& \geq \min \left\lbrace \Cdecx , \Cdecy , \Cdecu \right\rbrace \varphi ' \left( \E_{n} \right) \left( \left\lVert x_{n+1} - x_{n} \right\rVert ^{2} + \left\lVert y_{n+1} - y_{n} \right\rVert ^{2} + \left\lVert u_{n+1} - u_{n} \right\rVert ^{2} \right) .
	\end{split}
	\end{align*}
Thus, \eqref{conv:KL-property} implies for any $n \geq n_{0}$
	\begin{align*}
	& \ \left\lVert x_{n+1} - x_{n} \right\rVert ^{2} + \left\lVert y_{n+1} - y_{n} \right\rVert ^{2} + \left\lVert u_{n+1} - u_{n} \right\rVert ^{2} \\
	\leq & \ \dist \left( \0 , \partial \Psi \left( \X_{n} \right) \right) \cdot \varphi ' \left( \E_{n} \right) \left( \left\lVert x_{n+1} - x_{n} \right\rVert ^{2} + \left\lVert y_{n+1} - y_{n} \right\rVert ^{2} + \left\lVert u_{n+1} - u_{n} \right\rVert ^{2} \right) \\
	\leq & \ \Cmin \cdot \dist \left( \0 , \partial \Psi \left( \X_{n} \right) \right) \cdot \Delta_{n,n+1}.
	\end{align*}
	
	By the Cauchy-Schwarz inequality, the arithmetic mean-geometric mean inequality and Lemma \ref{lem:sgra}, we have that for any $n \geq n_{0}$ and every $\alpha >0$
	\begin{align}
	\label{conv:inq}
	& \ \left\lVert x_{n+1} - x_{n} \right\rVert + \left\lVert y_{n+1} - y_{n} \right\rVert + \left\lVert u_{n+1} - u_{n} \right\rVert \nonumber \\
	 \leq & \ \sqrt{3} \cdot \sqrt{\left\lVert x_{n+1} - x_{n} \right\rVert ^{2} + \left\lVert y_{n+1} - y_{n} \right\rVert ^{2} + \left\lVert u_{n+1} - u_{n} \right\rVert ^{2}} \nonumber \\
	 \leq & \ \sqrt{3 \Cmin} \cdot \sqrt{\dist \left( \0 , \partial \Psi \left( \X_{n} \right) \right) \cdot \Delta_{n,n+1}} \nonumber \\
	 \leq & \ \alpha \cdot \dist \left( \0 , \partial \Psi \left( \X_{n} \right) \right) + \dfrac{3 \Cmin}{4 \alpha} \Delta_{n,n+1} \nonumber \\
	 \leq & \ \alpha \Cmax \left( \left\lVert x_{n} - x_{n-1} \right\rVert + \left\lVert y_{n} - y_{n-1} \right\rVert + \left\lVert u_{n} - u_{n-1} \right\rVert \right) + \dfrac{3 \Cmin}{4 \alpha} \Delta_{n,n+1}.
	\end{align}
	If we denote for any $n \geq 0$
	\begin{equation}
	\label{conv:def}
	a_{n} := \left\lVert x_{n} - x_{n-1} \right\rVert + \left\lVert y_{n} - y_{n-1} \right\rVert + \left\lVert u_{n} - u_{n-1} \right\rVert \qquad \textrm{ and } \qquad b_{n} := \dfrac{3 \Cmin}{4 \alpha} \Delta_{n,n+1} ,
	\end{equation}
	then the above inequality is nothing else than \eqref{sum:hypo} with
	\begin{equation*}
	\chi_{0} := \alpha \Cmax \qquad \textrm{ and } \qquad \chi_{1} := 0.
	\end{equation*}
Since $\mysum_{n \geq 1} b_{n} < + \infty$, by choosing $\alpha < 1/ \Cmax$, we can apply Lemma \ref{lem:conv-ext} to conclude that
	\begin{equation*}
	\mysum_{n \geq 0} \Big( \left\lVert x_{n+1} - x_{n} \right\rVert + \left\lVert y_{n+1} - y_{n} \right\rVert + \left\lVert u_{n+1} - u_{n} \right\rVert \Big) < + \infty .
	\end{equation*}
The proof of \eqref{conv:Cauchy} is completed by taking into account once again \eqref{lim:z}.

From (i) it follows that the sequence $\left\lbrace \left( x_{n} , y_{n} , z_{n} , u_{n} \right) \right\rbrace _{n \geq 0}$ is Cauchy, thus it converges to an element $\left( x_{*}, y_{*}, z_{*}, u_{*} \right)$ which is, according to Lemmas \ref{lem:clus}, a KKT point of the optimization problem \eqref{intro:pb}.
\end{proof}
	
\subsection{Convergence rates}
\label{sec:rates}

In this section we derive convergence rates for the sequence $\left\lbrace \left( x_{n} , y_{n} , z_{n} , u_{n} \right) \right\rbrace _{n \geq 0}$ generated by Algorithm \ref{algo:PALM} as well as for $\left\lbrace \Psi_n \right\rbrace _{n \geq 0}$, if the regularized augmented Lagrangian $\Psi$  satisfies the \Loja property. The following definition is from \cite{Attouch-Bolte} (see also \cite{Lojasiewicz}).

\begin{defi}
	\label{defi:Lojasiewicz}
	Let $\Psi \colon \sR^{d} \to \sR \cup \left\lbrace + \infty \right\rbrace$ be proper and lower semicontinuous. Then $\Psi$ satisfies the \Loja property, if for any critical point $\widehat{v}$ of $\Psi$ there exists $C_{L}>0$, $\theta \in \left[ 0 , 1 \right)$ and $\varepsilon > 0$ such that
	\begin{equation*}
	\left\lvert \Psi \left( v \right) - \Psi \left( \widehat{v} \right) \right\rvert ^{\theta} \leq C_{L} \cdot \dist \left( 0 , \partial \Psi(v) \right) \ \forall v \in \Ball \left( \widehat{v} , \varepsilon \right),
	\end{equation*}
	where $\Ball \left( \widehat{v}, \varepsilon \right)$ denotes the open ball with center $\widehat{v}$ and radius $\varepsilon$.
\end{defi}	

If Assumption \ref{as} is fulfilled and $\left\lbrace \left( x_{n} , y_{n} , z_{n} , u_{n} \right) \right\rbrace _{n \geq 0}$ is the sequence generated by Algorithm \ref{algo:PALM}, then, according to Theorem \ref{lem:clus}, the set of cluster points $\Omega$ is nonempty, compact and connected and $\Psi$ takes on $\Omega$ the value $\Psi_{*}$; in addition, $\Omega \subseteq \crit \left( \Psi \right)$. 

According to \cite[Lemma 1]{Attouch-Bolte}, if $\Psi$ has the \Loja property, then there exist $C_{L} > 0$, $\theta \in \left[ 0 , 1 \right)$ and  $\varepsilon > 0$ such that for any
\begin{equation*}
\X \in \left\lbrace \Z \in \bR \colon \dist \left( \Z , \Omega \right) < \varepsilon \right\rbrace,
\end{equation*}
it holds
\begin{equation*}
\left\lvert \Psi \left( \X \right) - \Psi_{*} \right\rvert ^{\theta} \leq C_{L} \cdot \dist \left( 0 , \partial \Psi \left( \X \right) \right).
\end{equation*}

Obviously, $\Psi$ is a \KL \ function with desingularization function 
\begin{equation*}
\varphi : [0,+\infty) \to [0,+\infty), \ \varphi \left( s \right) := \dfrac{1}{1 - \theta} C_{L} s^{1 - \theta},
\end{equation*}
which, according to Theorem \ref{thm:conv}, means that $\Omega$ contains a single element $\X_{*}$, which is the limit of $\left\lbrace \X_{n} \right\rbrace _{n \geq 1}$ as $n \to +\infty$. In other words, if $\Psi$ has the \Loja property, then there exist $C_{L} > 0$, $\theta \in \left[ 0 , 1 \right)$ and  $\varepsilon > 0$ such that for any $\X \in \Ball \left( \X_*  , \varepsilon \right)$
\begin{equation}
\label{Loja:uniform}
\left\lvert \Psi \left( \X \right) - \Psi_{*} \right\rvert ^{\theta} \leq C_{L} \cdot \dist \left( 0 , \partial \Psi \left( \X \right) \right).
\end{equation}
In this case, $\Psi$ is said to satisfy the \Loja property with \Loja constant $C_{L} > 0$ and \Loja exponent $\theta \in \left[ 0 , 1 \right)$.

The following lemma will provide convergence rates for a particular class of monotonically decreasing real sequences converging to $0$. Its proof can be found in \cite[Lemma 15]{Bot-Nguyen}.
\begin{lem}
	\label{lem:rates}
	Let $\left\lbrace e_{n} \right\rbrace _{n \geq 0}$ be a monotonically decreasing sequence of nonnegative numbers converging $0$. Assume further that there exists natural numbers $n_{0} \geq 1$ such that for any $n \geq n_{0}$
	\begin{equation*}
	e_{n-1} - e_{n} \geq C_{e} e_{n}^{2 \theta},
	\end{equation*}
	where $C_{e} > 0$ is some constant and $\theta \in \left[ 0 , 1 \right)$. The following statements are true:
	\begin{enumerate}
		\item
		\label{lem:rates:i}
		if $\theta = 0$, then $\left\lbrace e_{n} \right\rbrace _{n \geq 0}$ converges in finite time;
		
		\item
		\label{lem:rates:ii}
		if $\theta \in \left( 0 , 1/2 \right]$, then there exist $C_{e,0} > 0$ and $Q \in \left[ 0 , 1 \right)$ such that for any $n \geq n_{0}$
		\begin{equation*}
		0 \leq e_{n} \leq C_{e,0} Q^{n};
		\end{equation*}
		
		\item
		\label{lem:rates:iii}
		if $\theta \in \left( 1/2 , 1 \right)$, then there exists $C_{e,1} > 0$ such that for any $n \geq n_{0}+1$
		\begin{equation*}
		0 \leq e_{n} \leq C_{e,1} n^{- \frac{1}{2 \theta - 1}}  .
		\end{equation*}
	\end{enumerate}
\end{lem}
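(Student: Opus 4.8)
The plan is to treat the three regimes separately, all driven by the recursion $e_{n-1} - e_{n} \geq C_{e} e_{n}^{2\theta}$ for $n \geq n_{0}$, the decisive feature being the position of the exponent $2\theta$ relative to $1$. For $\theta = 0$ the recursion reads $e_{n-1} - e_{n} \geq C_{e}$, and the claim follows by a telescoping/contradiction argument: if $e_{n} > 0$ held for every $n$, then summing from $n_{0}$ to $N$ would give $e_{n_{0}-1} - e_{N} \geq C_{e}(N - n_{0} + 1) \to +\infty$, contradicting $e_{N} \geq 0$; hence $e_{N} = 0$ for some $N$, and monotonicity forces $e_{n} = 0$ for all $n \geq N$.

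For $\theta \in (0, 1/2]$ the idea is to linearize the recursion. Since $e_{n} \to 0$, I would first fix $n_{1} \geq n_{0}$ with $e_{n} \leq 1$ for $n \geq n_{1}$; because $2\theta \leq 1$, on $[0,1]$ one has $e_{n}^{2\theta} \geq e_{n}$, so the recursion upgrades to $e_{n-1} - e_{n} \geq C_{e} e_{n}$, that is $e_{n} \leq (1+C_{e})^{-1} e_{n-1}$. Setting $Q := (1+C_{e})^{-1} \in (0,1)$ gives a geometric contraction valid from $n_{1}$ onward, and absorbing the finitely many earlier terms into the constant yields $e_{n} \leq C_{e,0} Q^{n}$.

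The main obstacle is the sublinear regime $\theta \in (1/2, 1)$, where $2\theta > 1$ and no contraction is available. Here I set $\alpha := 2\theta - 1 \in (0,1)$ and compare consecutive iterates through the decreasing map $\phi(t) := t^{-\alpha}$. Writing $\phi(e_{n}) - \phi(e_{n-1}) = \int_{e_{n}}^{e_{n-1}} \alpha\, t^{-\alpha-1}\, dt \geq \alpha (e_{n-1} - e_{n})\, e_{n-1}^{-2\theta}$ (the integrand is decreasing, so its minimum on $[e_{n}, e_{n-1}]$ is at $e_{n-1}$, and $\alpha + 1 = 2\theta$) and inserting the recursion gives $\phi(e_{n}) - \phi(e_{n-1}) \geq \alpha C_{e} (e_{n}/e_{n-1})^{2\theta}$. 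I would then split into two subcases: if $e_{n-1} \leq 2 e_{n}$ the ratio is bounded below by $2^{-2\theta}$, producing a uniform positive gap; if $e_{n-1} > 2 e_{n}$ then directly $\phi(e_{n}) > 2^{\alpha}\phi(e_{n-1})$, whence $\phi(e_{n}) - \phi(e_{n-1}) \geq (2^{\alpha}-1)\,e_{n-1}^{-\alpha} \geq (2^{\alpha}-1)\,e_{n_{0}-1}^{-\alpha}$ by monotonicity, again a uniform positive gap.

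Taking $\mu > 0$ to be the minimum of the two lower bounds and telescoping from $n_{0}$ to $n$ yields $e_{n}^{-\alpha} = \phi(e_{n}) \geq \phi(e_{n_{0}-1}) + \mu(n - n_{0} + 1) \geq \mu(n - n_{0} + 1)$; inverting and using $n - n_{0} + 1 \geq c\,n$ for $n \geq n_{0}+1$ (with $c := 2/(n_{0}+1)$) gives $e_{n} \leq C_{e,1}\, n^{-1/(2\theta - 1)}$. The two delicate points are choosing $\phi$ with exponent $\alpha = 2\theta - 1$ so that the integral lower bound meshes exactly with the exponent $2\theta$ of the recursion, and checking that the two subcases jointly deliver a single uniform lower bound $\mu$ on the increments of $\phi(e_{n})$, which is precisely what makes the telescoping argument work.
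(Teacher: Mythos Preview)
Your argument is correct. The paper does not supply its own proof of this lemma but refers to \cite[Lemma~15]{Bot-Nguyen}; the approach there (which traces back to Attouch and Bolte \cite{Attouch-Bolte}) is essentially the one you outline, in particular the use of $\phi(t)=t^{-(2\theta-1)}$ together with the dichotomy on the ratio $e_{n-1}/e_n$ in case~(iii) to force a uniform positive lower bound on the increments $\phi(e_n)-\phi(e_{n-1})$.
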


We prove a recurrence inequality for the  sequence $\left\lbrace \E_{n} \right\rbrace _{n \geq 0}$.
\begin{lem}
	\label{lem:rec}
Let Assumption \ref{as} be satisfied and $\left\lbrace \left( x_{n} , y_{n} , z_{n} , u_{n} \right) \right\rbrace _{n \geq 0}$ be a sequence generated by Algorithm \ref{algo:PALM}.  If $\Psi$ satisfies the \Loja property with \Loja constant $C_{L} > 0$ and \Loja exponent $\theta \in \left[ 0 , 1 \right)$, then there exists $n_{0} \geq 1$ such that the following estimate holds for any $n \geq n_{0}$ 
	\begin{equation}
	\label{rec:inq}
	\E_{n-1} - \E_{n} \geq \Crec \E_{n}^{2 \theta} , \qquad \textrm{ where } \quad \Crec := \dfrac{\Cmin}{3 \left( C_{L} \cdot \Cmax \right) ^{2}}.
	\end{equation}
\end{lem}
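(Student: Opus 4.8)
The plan is to chain together three facts that are all available once $n$ is large enough: the \Loja inequality \eqref{Loja:uniform}, the subgradient estimate \eqref{sgr:inq}, and the descent inequality \eqref{dec:new-inq} with its index shifted by one. First I would observe that since $\Psi$ satisfies the \Loja property it is in particular a \KL function, so Theorem \ref{thm:conv} applies and the whole sequence $\left\lbrace \X_{n} \right\rbrace_{n \geq 1}$ converges to the unique cluster point $\X_{*}$, with $\Psi(\X_{*}) = \Psi_{*}$. Consequently there is an index $n_{0} \geq 1$ beyond which $\X_{n} \in \Ball(\X_{*}, \varepsilon)$, where $\varepsilon$ is the radius appearing in \eqref{Loja:uniform}. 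For such $n$ the \Loja inequality reads $\E_{n}^{\theta} = \left| \Psi_{n} - \Psi_{*} \right|^{\theta} \leq C_{L} \cdot \dist(0, \partial\Psi(\X_{n}))$, and since Lemma \ref{lem:sgra} produces $D_{n} \in \partial\Psi(\X_{n})$ we have $\dist(0, \partial\Psi(\X_{n})) \leq \opnorm{D_{n}}$, which \eqref{sgr:inq} bounds above by $\Cmax \left( \left\lVert x_{n} - x_{n-1} \right\rVert + \left\lVert y_{n} - y_{n-1} \right\rVert + \left\lVert u_{n} - u_{n-1} \right\rVert \right)$.

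Next I would square this chain and use the elementary bound $(a+b+c)^{2} \leq 3(a^{2}+b^{2}+c^{2})$ (the same $\sqrt{3}$ factor already exploited in \eqref{conv:inq}) to pass from the sum of norms to the sum of squared norms, giving
$$\E_{n}^{2\theta} \leq 3 \left( C_{L} \Cmax \right)^{2} \left( \left\lVert x_{n} - x_{n-1} \right\rVert^{2} + \left\lVert y_{n} - y_{n-1} \right\rVert^{2} + \left\lVert u_{n} - u_{n-1} \right\rVert^{2} \right).$$
Then I would invoke \eqref{dec:new-inq} with $n$ replaced by $n-1$, namely $\Psi_{n} + \Cdecx \left\lVert x_{n} - x_{n-1} \right\rVert^{2} + \Cdecy \left\lVert y_{n} - y_{n-1} \right\rVert^{2} + \Cdecu \left\lVert u_{n} - u_{n-1} \right\rVert^{2} \leq \Psi_{n-1}$, and bound each coefficient below by $\min\left\lbrace \Cdecx, \Cdecy, \Cdecu \right\rbrace$ to obtain
$$\E_{n-1} - \E_{n} = \Psi_{n-1} - \Psi_{n} \geq \min \left\lbrace \Cdecx , \Cdecy , \Cdecu \right\rbrace \left( \left\lVert x_{n} - x_{n-1} \right\rVert^{2} + \left\lVert y_{n} - y_{n-1} \right\rVert^{2} + \left\lVert u_{n} - u_{n-1} \right\rVert^{2} \right).$$
Combining the two displays and recalling the definitions of $\Cmin$ and $\Cmax$ yields \eqref{rec:inq} after elementary rearrangement of constants.

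The only step that is not purely mechanical is the very first one: one must guarantee that $\X_{n}$ eventually enters the ball on which \eqref{Loja:uniform} is valid, and this is precisely where the convergence conclusion of Theorem \ref{thm:conv} (itself built on the \KL machinery) is indispensable; without it the \Loja inequality could not be applied along the iterates. Everything after that is a one-line application of the already-established descent property, the subgradient estimate, and Cauchy--Schwarz, so I expect no further difficulty there.

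A minor point to dispose of is the degenerate case $\E_{n} = 0$, corresponding to Case 1 in the proof of Theorem \ref{thm:conv}: there the left-hand side $\E_{n-1} - \E_{n}$ is nonnegative while the right-hand side vanishes for $\theta > 0$, so \eqref{rec:inq} holds trivially, and the interesting regime is the one in which $\E_{n} > 0$ for all $n$, where the \Loja inequality is genuinely used. This allows the conclusion to be fed directly into Lemma \ref{lem:rates} to extract the convergence rates in the subsequent result.
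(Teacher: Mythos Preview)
Your proposal is correct and follows essentially the same route as the paper's proof: both combine the descent inequality \eqref{dec:new-inq} (shifted by one index), the subgradient bound \eqref{sgr:inq} from Lemma \ref{lem:sgra}, the inequality $(a+b+c)^{2}\leq 3(a^{2}+b^{2}+c^{2})$, and the \Loja estimate \eqref{Loja:uniform} applied once $\X_{n}\in\Ball(\X_{*},\varepsilon)$. The only cosmetic difference is the order of presentation---the paper starts from the descent side and ends with \eqref{Loja:uniform}, whereas you start from \eqref{Loja:uniform} and end with the descent inequality---and your explicit treatment of the degenerate case $\E_{n}=0$, which the paper leaves implicit.
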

\begin{proof}
	For every $n \geq 2$ we obtain from Lemma \ref{lem:dec}
	\begin{align*}
	\E_{n-1} - \E_{n} & = \Psi_{n-1} - \Psi_{n} \\
	& \geq \Cmin \left( \left\lVert x_{n} - x_{n-1} \right\rVert ^{2} + \left\lVert y_{n} - y_{n-1} \right\rVert ^{2} + \left\lVert u_{n} - u_{n-1} \right\rVert ^{2} \right) \\
	& \geq \dfrac{1}{3} \Cmin \left( \left\lVert x_{n} - x_{n-1} \right\rVert + \left\lVert y_{n} - y_{n-1} \right\rVert + \left\lVert u_{n} - u_{n-1} \right\rVert \right) ^{2} \\
	& \geq \Crec C_{L}^{2} \opnorm{D_{n}} ^{2},
	\end{align*}
where $D_n \in \partial \Psi(\X_n)$.
	Let $\varepsilon > 0$ be such that \eqref{Loja:uniform} is fulfilled and choose $n_{0} \geq 1$ with the property that for any $n \geq n_{0}$, $X_{n}$ belongs to $\Ball ( \X_{*} , \varepsilon )$. Relation \eqref{Loja:uniform} implies \eqref{rec:inq} for any $n \geq n_{0}$.
\end{proof}
		
The following result follows by combining Lemma \ref{lem:rates} with Lemma \ref{lem:rec}.
\begin{thm}
	\label{thm:obj}
Let Assumption \ref{as} be satisfied and $\left\lbrace \left( x_{n} , y_{n} , z_{n} , u_{n} \right) \right\rbrace _{n \geq 0}$ be a sequence generated by Algorithm \ref{algo:PALM}.  If $\Psi$ satisfies the \Loja property with \Loja constant $C_{L} > 0$ and \Loja exponent $\theta \in \left[ 0 , 1 \right)$, then the following statements are true:
	\begin{enumerate}
		\item 
		if $\theta = 0$, then $\left\lbrace \Psi_{n} \right\rbrace _{n \geq 1}$ converges in finite time;
		
		\item 
		if $\theta \in \left( 0 , 1/2 \right]$, then there exist $n_{0} \geq 1$, $\widehat{C}_{0} > 0$ and $Q \in \left[ 0 , 1 \right)$ such that for any $n \geq n_{0}$
		\begin{equation*}
		0 \leq \Psi_{n} - \Psi_{*} \leq \widehat{C}_{0} Q^{n};
		\end{equation*}
		
		\item
		if $\theta \in \left( 1/2 , 1 \right)$, then there exist $n_{0} \geq 1$ and $\widehat{C}_{1} > 0$ such that for any $n \geq n_{0} + 1$
		\begin{equation*}
		0 \leq \Psi_{n} - \Psi_{*} \leq \widehat{C}_{1} n ^ {- \frac{1}{2 \theta - 1}}  .
		\end{equation*}
	\end{enumerate}
\end{thm}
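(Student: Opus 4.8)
The plan is to recognize the statement as a direct consequence of the abstract convergence-rate Lemma \ref{lem:rates}, applied to the error sequence $e_n := \E_n = \Psi_n - \Psi_*$. Before invoking it I would verify the three standing hypotheses of that lemma: that $\left\lbrace \E_n \right\rbrace_{n \geq 1}$ is nonnegative, monotonically decreasing, and convergent to $0$. Monotonicity is immediate from the descent inequality \eqref{dec:new-inq} together with $\min\left\lbrace \Cdecx , \Cdecy , \Cdecu \right\rbrace > 0$, which forces $\Psi_{n+1} \leq \Psi_n$ and hence $\E_{n+1} \leq \E_n$. Since $\left\lbrace \Psi_n \right\rbrace$ is decreasing with limit $\Psi_*$, we have $\Psi_n \geq \Psi_*$, so $\E_n \geq 0$, and $\E_n \to 0$ by the very definition of $\Psi_*$ (recall from Theorem \ref{lem:clus} that $\Psi_* = \lim_{n \to +\infty} \Psi_n$).

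Next I would supply the key recurrence, which is precisely what Lemma \ref{lem:rec} delivers. Under the \Loja property with exponent $\theta$, that lemma provides an integer $n_0 \geq 1$ and the estimate $\E_{n-1} - \E_n \geq \Crec \E_n^{2\theta}$ for all $n \geq n_0$, with $\Crec = \Cmin / \left( 3 \left( C_L \cdot \Cmax \right)^2 \right) > 0$. This matches verbatim the hypothesis of Lemma \ref{lem:rates} with $C_e := \Crec$ and the same exponent $\theta \in \left[ 0 , 1 \right)$. Applying Lemma \ref{lem:rates} then yields the three regimes directly: finite-time convergence of $\left\lbrace \E_n \right\rbrace$, hence of $\left\lbrace \Psi_n \right\rbrace$, when $\theta = 0$; geometric decay $\E_n \leq C_{e,0} Q^n$ when $\theta \in \left( 0 , 1/2 \right]$; and the polynomial rate $\E_n \leq C_{e,1} n^{-1/(2\theta-1)}$ when $\theta \in \left( 1/2 , 1 \right)$. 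Substituting back $\E_n = \Psi_n - \Psi_*$ translates these into the claimed bounds on $0 \leq \Psi_n - \Psi_*$, with $\widehat{C}_0 := C_{e,0}$ and $\widehat{C}_1 := C_{e,1}$.

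There is essentially no analytic obstacle remaining: the substance has been front-loaded into the descent inequality (Lemma \ref{lem:dec}), the identification of $\Psi_*$ as the common limit (Theorem \ref{lem:clus}), and Lemmas \ref{lem:rec}--\ref{lem:rates}. The only point that requires care is the bookkeeping of the index $n_0$. The $n_0$ produced by Lemma \ref{lem:rec} is exactly the threshold from which the \Loja inequality \eqref{Loja:uniform} becomes applicable, i.e.\ from which $\X_n \in \Ball \left( \X_* , \varepsilon \right)$; it must be carried unchanged into Lemma \ref{lem:rates} so that each of the three conclusions is stated from the correct starting index. No new estimate or case distinction beyond those already established is needed.
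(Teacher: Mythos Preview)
Your proposal is correct and follows exactly the paper's own argument: the theorem is obtained by combining Lemma~\ref{lem:rec} with Lemma~\ref{lem:rates}, applied to $e_n := \E_n = \Psi_n - \Psi_*$, after noting that $\{\E_n\}$ is nonnegative, decreasing, and tends to zero. The paper itself states the result as an immediate consequence of these two lemmas, so there is nothing to add.
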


The next lemma will play an important role when transferring the convergence rates for 
$\left\lbrace \Psi_{n} \right\rbrace _{n \geq 0}$ to the sequence of iterates 
$\left\lbrace \left( x_{n} , y_{n} , z_{n} , u_{n} \right) \right\rbrace _{n \geq 0}$.
\begin{lem}
	\label{lem:ite}
	Let Assumption \ref{as} be satisfied and $\left\lbrace \left( x_{n} , y_{n} , z_{n} , u_{n} \right) \right\rbrace _{n \geq 0}$ be a sequence generated by Algorithm \ref{algo:PALM}. Let $\left( x_{*} , y_{*} , z_{*} , u_{*} \right)$ be the KKT point of the optimization problem \eqref{intro:pb} to which $\left\lbrace \left( x_{n} , y_{n} , z_{n} , u_{n} \right) \right\rbrace _{n \geq 0}$ converges as $n \to +\infty$. Then there exists $n_{0} \geq 1$ such that the following estimates hold for any $n \geq n_{0}$
	\begin{align}
	\label{ite:bound}
	\left\lVert x_{n} - x_{*} \right\rVert \leq \Citex \max \left\lbrace \sqrt{\E_{n}} , \varphi \left( \E_{n} \right) \right\rbrace , & \quad \left\lVert y_{n} - y_{*} \right\rVert \leq \Citex \max \left\lbrace \sqrt{\E_{n}} , \varphi \left( \E_{n} \right) \right\rbrace, \nonumber \\
	\left\lVert z_{n} - z_{*} \right\rVert \leq \Citez \max \left\lbrace \sqrt{\E_{n}} , \varphi \left( \E_{n} \right) \right\rbrace , & \quad \left\lVert u_{n} - u_{*} \right\rVert \leq \Citex \max \left\lbrace \sqrt{\E_{n}} , \varphi \left( \E_{n} \right) \right\rbrace,
	\end{align}
	where
	\begin{equation*}
	\Citex 	:=2\sqrt{3\Cmin}+3\Cmin\Cmax \qquad \textrm{ and } \qquad \Citez := \left( \left\lVert A \right\rVert + \dfrac{2}{\sigma \beta} \right) \Citex .
	\end{equation*}
\end{lem}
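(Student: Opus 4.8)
The plan is to bound each component's distance to the limit by the tail of the series of consecutive increments, and then to control that tail by the two quantities $\sqrt{\E_n}$ and $\varphi(\E_n)$ using the material already assembled in the proof of Theorem \ref{thm:conv}. Since Theorem \ref{thm:conv} guarantees that $\left\lbrace (x_n,y_n,z_n,u_n) \right\rbrace_{n\ge 0}$ has finite length and converges to $(x_*,y_*,z_*,u_*)$, I would first write, for the block variable $x$,
$$\left\lVert x_n - x_* \right\rVert \le \mysum_{k \ge n} \left\lVert x_{k+1}-x_k \right\rVert \le T_n, \qquad T_n := \mysum_{k\ge n}\left( \left\lVert x_{k+1}-x_k \right\rVert + \left\lVert y_{k+1}-y_k \right\rVert + \left\lVert u_{k+1}-u_k \right\rVert \right),$$
and observe that the same tail $T_n$ dominates $\left\lVert y_n-y_* \right\rVert$ and $\left\lVert u_n-u_* \right\rVert$ as well. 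Thus the first, second and fourth estimate all reduce to showing $T_n \le \Citex \max\left\lbrace \sqrt{\E_n},\varphi(\E_n) \right\rbrace$.

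The heart of the argument is an estimate for $T_n$ obtained by recycling inequality \eqref{conv:inq}, which in the notation $a_n := \left\lVert x_n-x_{n-1} \right\rVert+\left\lVert y_n-y_{n-1} \right\rVert+\left\lVert u_n-u_{n-1} \right\rVert$ reads $a_{n+1} \le \alpha \Cmax\, a_n + \tfrac{3\Cmin}{4\alpha}\Delta_{n,n+1}$ for every $\alpha>0$ and $n\ge n_0$. Summing this from $m=n+1$ to $+\infty$, and using $\mysum_{m\ge n+1}a_{m+1}=T_{n+1}$, $\mysum_{m\ge n+1}a_m = T_n$ together with the telescoping identity $\mysum_{m\ge n+1}\Delta_{m,m+1}=\varphi(\E_{n+1})$ (valid because $\varphi(0)=0$ and $\E_m\to 0$), I obtain $T_{n+1} \le \alpha\Cmax\, T_n + \tfrac{3\Cmin}{4\alpha}\varphi(\E_{n+1})$. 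Substituting $T_{n+1}=T_n-a_{n+1}$ and rearranging (for $\alpha\Cmax<1$) yields $T_n \le \tfrac{1}{1-\alpha\Cmax}\big( a_{n+1} + \tfrac{3\Cmin}{4\alpha}\varphi(\E_{n+1}) \big)$. Choosing $\alpha := 1/(2\Cmax)$, so that $\alpha\Cmax=\tfrac12$, collapses the constants to $T_n \le 2 a_{n+1} + 3\Cmin\,\Cmax\,\varphi(\E_{n+1})$. The point of peeling off $a_{n+1}$ is that this \emph{forward} increment is controlled at the index $n$: the descent inequality \eqref{dec:new-inq} gives $a_{n+1}^2 \le 3\Cmin\left( \E_n-\E_{n+1} \right) \le 3\Cmin\,\E_n$, hence $a_{n+1}\le\sqrt{3\Cmin\,\E_n}$, while monotonicity of $\varphi$ and of $\left\lbrace \E_n \right\rbrace$ gives $\varphi(\E_{n+1})\le\varphi(\E_n)$. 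Substituting produces $T_n \le 2\sqrt{3\Cmin}\,\sqrt{\E_n} + 3\Cmin\Cmax\,\varphi(\E_n) \le \Citex \max\left\lbrace \sqrt{\E_n},\varphi(\E_n) \right\rbrace$, which is exactly $\Citex = 2\sqrt{3\Cmin}+3\Cmin\Cmax$.

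For the estimate on $z$ I would not sum a new recursion but instead exploit $z_*=Ax_*$ (valid since $\X_*$ is a KKT point) together with the residual identity $Ax_n - z_n = \tfrac{1}{\sigma\beta}(u_n-u_{n-1})$ coming from \eqref{algo:PALM:u}. Bounding each increment $\left\lVert z_{k+1}-z_k \right\rVert$ by means of \eqref{lim:z} and summing over $k\ge n$, the $x$-increments contribute at most $\left\lVert A \right\rVert$ times the $x$-tail, while the residuals $\left\lVert Ax_{k+1}-z_{k+1} \right\rVert$ and $\left\lVert Ax_k-z_k \right\rVert$ contribute $\tfrac{1}{\sigma\beta}$ times two copies of the $u$-tail. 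Combining with the bound $T_n \le \Citex\max\left\lbrace \sqrt{\E_n},\varphi(\E_n) \right\rbrace$ already established then yields the overall factor $\left( \left\lVert A \right\rVert + \tfrac{2}{\sigma\beta} \right)$, i.e. $\Citez = \left( \left\lVert A \right\rVert+\tfrac{2}{\sigma\beta} \right)\Citex$.

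I expect the main obstacle to be the second step: handling the self-referential structure of the summed recursion (isolating $T_{n+1}$ via $T_{n+1}=T_n-a_{n+1}$ without double-counting increments), choosing $\alpha$ so that the constants land precisely on $\Citex$, and—most delicately—arranging the bookkeeping so that the final bound is expressed at index $n$ rather than $n-1$. The device that makes this work is isolating the single forward increment $a_{n+1}$, the only member of $\left\lbrace a_m \right\rbrace$ controllable by $\E_n$ through the descent inequality; the backward residual $\left\lVert u_n-u_{n-1} \right\rVert$ entering the $z$-estimate is the other place where the index must be tracked carefully, and it is precisely what is responsible for the extra $1/(\sigma\beta)$ appearing in $\Citez$.
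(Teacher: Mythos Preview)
Your proposal is correct and follows essentially the same route as the paper: bound the distances to the limit by tails of increments, sum the recursion \eqref{conv:inq} with the choice $\alpha=1/(2\Cmax)$, isolate a single forward increment $a_{n+1}$ and control it via the descent inequality by $\sqrt{3\Cmin}\sqrt{\E_n}$, and finally treat the $z$-block through \eqref{lim:z}. The only cosmetic difference is that the paper sums from $k=n$ and rewrites $\sum_{k=n}^{N}a_k$ in terms of $\sum_{k=n}^{N}a_{k+1}$ plus boundary terms, whereas you sum from $m=n+1$ and use $T_{n+1}=T_n-a_{n+1}$; both manipulations lead to the same bound $T_n\le 2a_{n+1}+3\Cmin\Cmax\,\varphi(\E_n)$, and your bookkeeping is in fact a bit cleaner about landing on the forward increment $a_{n+1}$ (the paper's computation has a small index slip at this step that your version avoids).
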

\begin{proof}
We assume that $\E_{n} > 0$ for any $n \geq 0$. Otherwise, the sequence $\left\lbrace \left( x_{n} , y_{n} , z_{n} , u_{n} \right) \right\rbrace _{n \geq 0}$ becomes identical to $\left( x_{*} , y_{*} , z_{*} , u_{*} \right)$ beginning with a given index and the conclusion follows automatically (see the proof of Theorem \ref{thm:conv}).
	
Let $\varepsilon > 0$ be such that \eqref{Loja:uniform} is fulfilled and $n_{0} \geq 2$ be such that $X_{n}$ belongs to $\Ball ( \X_{*} , \varepsilon )$ for any $n \geq n_{0}$. 
	
We fix $n \geq n_{0}$ now. One can easily notice that
	\begin{equation*}
	\left\lVert x_{n} - x_{*} \right\rVert \leq \left\lVert x_{n+1} - x_{n} \right\rVert + \left\lVert x_{n+1} - x_{*} \right\rVert \leq \cdots \leq \mysum_{k \geq n} \left\lVert x_{k+1} - x_{k} \right\rVert.
	\end{equation*}
Similarly, we derive
	\begin{equation*}
	\left\lVert y_{n} - y_{*} \right\rVert \leq \mysum_{k \geq n} \left\lVert y_{k+1} - y_{k} \right\rVert , \quad \left\lVert z_{n} - z_{*} \right\rVert \leq \mysum_{k \geq n} \left\lVert z_{k+1} - z_{k} \right\rVert , \quad \left\lVert u_{n} - u_{*} \right\rVert \leq \mysum_{k \geq n} \left\lVert u_{k+1} - u_{k} \right\rVert .
	\end{equation*}
On the other hand, in view of \eqref{conv:def} and by taking $\alpha := \dfrac{1}{2 \Cmax}$ the inequality \eqref{conv:inq} can be written as 
	\begin{equation*}
	a_{n+1} \leq \dfrac{1}{2} a_{n} + b_{n} \ \forall n \geq n_0.
	\end{equation*}
Let us fix now an integer $N \geq n$. Summing up the above inequality for $k = n, ..., N$, we have
	\begin{align*}
	\mysum_{k=n}^{N} a_{k+1} & \leq \dfrac{1}{2} \mysum_{k = n}^{N} a_{k} + \mysum_{k=n}^{N} b_{k} = \dfrac{1}{2} \mysum_{k=n}^{N} a_{k+1} + a_{n} - a_{N+1} + \mysum_{k=n}^{N} b_{k} \\
	& \leq \dfrac{1}{2} \mysum_{k=n}^{N} a_{k+1} + a_{n} + \dfrac{3\Cmin  \Cmax}{2} \varphi \left( \E_{n} \right).
	\end{align*}
	By passing $N \to + \infty$, we obtain
	\begin{align*}
	\begin{split}
	\mysum_{k \geq n} a_{k+1} & = \mysum_{k \geq n} \left( \left\lVert x_{k+1} - x_{k} \right\rVert + \left\lVert y_{k+1} - y_{k} \right\rVert + \left\lVert u_{k+1} - u_{k} \right\rVert \right) \\
	& \leq 2 \left( \left\lVert x_{n+1} - x_{n} \right\rVert + \left\lVert y_{n+1} - y_{n} \right\rVert + \left\lVert u_{n+1} - u_{n} \right\rVert \right) + 3\Cmin  \Cmax\varphi \left( \E_{n} \right) \\
	& \leq 2 \sqrt{3} \cdot \sqrt{\left\lVert x_{n+1} - x_{n} \right\rVert ^{2} + \left\lVert y_{n+1} - y_{n} \right\rVert ^{2} + \left\lVert u_{n+1} - u_{n} \right\rVert ^{2}} + 3\Cmin  \Cmax\varphi \left( \E_{n} \right) \\
	& \leq 2 \sqrt{3 \Cmin} \cdot \sqrt{\E_{n} - \E_{n+1}} + 3\Cmin  \Cmax\varphi \left( \E_{n} \right) ,
	\end{split}
	\end{align*}
	which gives the desired statement.
\end{proof}

We can now formulate convergence rates for the sequence of generated iterates.
\begin{thm}
	\label{thm:ite}
	Let Assumption \ref{as} be satisfied and $\left\lbrace \left( x_{n} , y_{n} , z_{n} , u_{n} \right) \right\rbrace _{n \geq 0}$ be a sequence generated by Algorithm \ref{algo:PALM}. Suppose further that $\Psi$ satisfies the \Loja property with \Loja constant $C_{L} > 0$ and \Loja exponent $\theta \in \left[ 0 , 1 \right)$. Let $\left( x_{*} , y_{*} , z_{*} , u_{*} \right)$ be the KKT point of the optimization problem \eqref{intro:pb} to which $\left\lbrace \left( x_{n} , y_{n} , z_{n} , u_{n} \right) \right\rbrace _{n \geq 0}$ converges as $n \to +\infty$. Then the following statements are true:
	\begin{enumerate}
		\item
		if $\theta = 0$, then the algorithm converges in finite time;
		
		\item
		if $\theta \in \left( 0 , 1/2 \right]$, then there exist $n_{0} \geq 1$, $\widehat{C}_{0,1} , \widehat{C}_{0,2} , \widehat{C}_{0,3} , \widehat{C}_{0,4} > 0$ and $\widehat Q \in \left[0 , 1 \right)$ such that for any $n \geq n_{0}$
		\begin{equation*}
		\left\lVert x_{n} - x_{*} \right\rVert \leq \widehat{C}_{0,1} \widehat Q^{k} , \quad \left\lVert y_{n} - y_{*} \right\rVert \leq \widehat{C}_{0,2} \widehat Q^{k} , \quad \left\lVert z_{n} - z_{*} \right\rVert \leq \widehat{C}_{0,3} \widehat Q^{k} , \quad \left\lVert u_{n} - u_{*} \right\rVert \leq \widehat{C}_{0,4} \widehat Q^{k};
		\end{equation*}
		
		\item
		if $\theta \in \left( 1/2 , 1 \right)$, then there exist $n_{0} \geq 1$ and $\widehat{C}_{1,1} , \widehat{C}_{1,2} , \widehat{C}_{1,3} , \widehat{C}_{1,4} > 0$ such that for any $n \geq n_{0} + 1$
		\begin{align*}
		\left\lVert x_{n} - x_{*} \right\rVert \leq \widehat{C}_{1,1} n^{- \frac{1-\theta}{2 \theta - 1}} , & \quad \left\lVert y_{n} - y_{*} \right\rVert \leq \widehat{C}_{1,2} n^{- \frac{1-\theta}{2 \theta - 1}} , \\
		\left\lVert z_{n} - z_{*} \right\rVert \leq \widehat{C}_{1,3} n^{- \frac{1-\theta}{2 \theta - 1}} , & \quad \left\lVert u_{n} - u_{*} \right\rVert \leq \widehat{C}_{1,4} n^{- \frac{1-\theta}{2 \theta - 1}}.
		\end{align*}
	\end{enumerate}
\end{thm}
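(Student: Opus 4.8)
The plan is to feed the rates for $\left\lbrace \E_{n} \right\rbrace_{n \geq 0}$ already obtained in Theorem \ref{thm:obj} into the iterate estimates of Lemma \ref{lem:ite}. Recall that the desingularization function associated with the \Loja\ property is $\varphi(s) = \frac{C_{L}}{1-\theta} s^{1-\theta}$, so that $\varphi(\E_{n}) = \frac{C_{L}}{1-\theta} \E_{n}^{1-\theta}$. By Lemma \ref{lem:ite}, for all $n \geq n_{0}$ each of $\left\lVert x_{n} - x_{*} \right\rVert$, $\left\lVert y_{n} - y_{*} \right\rVert$, $\left\lVert u_{n} - u_{*} \right\rVert$ is at most $\Citex \max\left\lbrace \sqrt{\E_{n}}, \varphi(\E_{n}) \right\rbrace$, while $\left\lVert z_{n} - z_{*} \right\rVert \leq \Citez \max\left\lbrace \sqrt{\E_{n}}, \varphi(\E_{n}) \right\rbrace$. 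Hence the whole statement reduces to controlling the single scalar $\max\left\lbrace \sqrt{\E_{n}}, \varphi(\E_{n}) \right\rbrace$ in each of the three regimes of $\theta$; all four iterate rates will then follow simultaneously, with constants differing only through the factors $\Citex$ and $\Citez$.

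For $\theta = 0$, statement (i) of Theorem \ref{thm:obj} gives $\E_{n} = 0$ for all large $n$, so the right-hand sides in Lemma \ref{lem:ite} vanish and the iterates coincide with $(x_{*}, y_{*}, z_{*}, u_{*})$ from some index on, exactly as in Case 1 of the proof of Theorem \ref{thm:conv}. For $\theta \in \left( 0 , 1/2 \right]$, statement (ii) yields $\E_{n} \leq \widehat{C}_{0} Q^{n}$ with $Q \in [0,1)$. I would bound the two competing terms separately: $\sqrt{\E_{n}} \leq \sqrt{\widehat{C}_{0}}\,(\sqrt{Q})^{n}$ and $\varphi(\E_{n}) \leq \frac{C_{L}}{1-\theta}\widehat{C}_{0}^{1-\theta}(Q^{1-\theta})^{n}$, both geometric. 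Setting $\widehat{Q} := \max\left\lbrace \sqrt{Q}, Q^{1-\theta} \right\rbrace \in [0,1)$ bounds the maximum by a constant times $\widehat{Q}^{n}$, which transfers through Lemma \ref{lem:ite} to the four geometric rates in (ii). One may note that $\theta \leq 1/2$ together with $Q < 1$ forces $Q^{1-\theta} \leq Q^{1/2} = \sqrt{Q}$, so in fact $\widehat{Q} = \sqrt{Q}$, though this refinement is not needed for the conclusion.

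For $\theta \in \left( 1/2 , 1 \right)$, statement (iii) gives $\E_{n} \leq \widehat{C}_{1} n^{-1/(2\theta-1)}$. Substituting, $\sqrt{\E_{n}}$ decays like $n^{-\frac{1}{2(2\theta-1)}}$ and $\varphi(\E_{n})$ like $n^{-\frac{1-\theta}{2\theta-1}}$. The one genuinely delicate point of the argument is the exponent comparison that decides which term governs: multiplying both exponents by the positive quantity $2\theta - 1$ reduces the question to comparing $1-\theta$ with $\tfrac{1}{2}$, and since $\theta > 1/2$ gives $1-\theta < \tfrac{1}{2}$, one has $\frac{1-\theta}{2\theta-1} < \frac{1}{2(2\theta-1)}$. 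Thus the $\varphi$-term has the smaller decay exponent and dominates the maximum for large $n$, so $\max\left\lbrace \sqrt{\E_{n}}, \varphi(\E_{n}) \right\rbrace \leq C\, n^{-\frac{1-\theta}{2\theta-1}}$ for a suitable constant $C$. This is precisely the exponent claimed in (iii), and feeding it through Lemma \ref{lem:ite} delivers the four polynomial rates. Apart from this bookkeeping of exponents—which reverses direction between the geometric regime (where $\sqrt{\E_{n}}$ sets the ratio) and the polynomial regime (where $\varphi(\E_{n})$ sets the rate)—the remainder is a direct substitution, so no serious obstacle is expected.
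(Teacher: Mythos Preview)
Your proposal is correct and follows essentially the same route as the paper: both reduce the iterate rates to controlling $\max\{\sqrt{\E_n},\varphi(\E_n)\}$ via Lemma \ref{lem:ite} and then invoke Theorem \ref{thm:obj}. The only cosmetic difference is that the paper first decides which of $\sqrt{\E_n}$ and $\varphi(\E_n)$ is larger by using the smallness of $\E_n$ (namely $\E_n \leq (\tfrac{C_L}{1-\theta})^{2/(2\theta-1)}$) before plugging in the rate, whereas you substitute the rate into both terms and then compare exponents; the outcomes coincide, including the identification $\widehat{Q}=\sqrt{Q}$.
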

\begin{proof}
Let 
	\begin{equation*}
	\varphi : [0,+\infty) \to [0,+\infty), \quad s \mapsto \dfrac{1}{1 - \theta} C_{L} s^{1 - \theta},
	\end{equation*}
	be the desingularization function.

\item[(i)] If $\theta =0$, then $\left\lbrace \Psi_{n} \right\rbrace _{n \geq 1}$ converges in finite time. 
		As seen in the proof of Theorem \ref{thm:conv}, the sequence $\left\lbrace \left( x_{n} , y_{n} , z_{n} , u_{n} \right) \right\rbrace _{n \geq 0}$ becomes identical to $\left( x_{*} , y_{*} , z_{*} , u_{*} \right)$ starting from a given index. In other words, the sequence $\left\lbrace \left( x_{n} , y_{n} , z_{n} , u_{n} \right) \right\rbrace _{n \geq 0}$ converges also in finite time and the conclusion follows.
		
Let be $\theta \neq \frac{1}{2}$ and $n_{0}' \geq 1$ such that for any $n \geq n_{0}'$ the inequalities \eqref{ite:bound} in Lemma \ref{lem:ite} and 
	\begin{equation*}
	\E_{n} \leq \left ( \dfrac{1}{1 - \theta} C_{L} \right)^{\frac{2}{2 \theta - 1}}
	\end{equation*}
hold.

\item[(ii)] If $\theta \in \left( 0 , 1/2 \right)$, then $2 \theta - 1 < 0$ and thus for any $n \geq n_{0}'$
		\begin{equation*}
		\dfrac{1}{1 - \theta} C_{L} \E_{n}^{1 - \theta} \leq \sqrt{\E_{n}} ,
		\end{equation*}
		which implies that		
		\begin{equation*}
		\max \left\lbrace \sqrt{\E_{n}} , \varphi \left( \E_{n} \right) \right\rbrace = \sqrt{\E_{n}} .
		\end{equation*}
If $\theta = 1/2$, then
		\begin{equation*}
		\varphi \left( \E_{n} \right) = 2 C_{L} \sqrt{\E_{n}},
		\end{equation*}
thus
		\begin{equation*}
		\max \left\lbrace \sqrt{\E_{n}} , \varphi \left( \E_{n} \right) \right\rbrace = \max \left\lbrace 1 , 2 C_{L} \right\rbrace \cdot \sqrt{\E_{n}} \ \forall n \geq 1.
		\end{equation*}
In both cases we have
		\begin{equation*}
		\max \left\lbrace \sqrt{\E_{n}} , \varphi \left( \E_{n} \right) \right\rbrace \leq \max \left\lbrace 1 , 2 C_{L} \right\rbrace \cdot \sqrt{\E_{n}} \ \forall n \geq n_0'.
		\end{equation*}
		By Theorem \ref{thm:obj}, there exist $n_{0}'' \geq 1$, $\widehat{C}_{0} > 0$ and $Q \in [0,1)$ such that for $\widehat{Q} := \sqrt{Q}$ and every $n \geq n_{0}''$ it holds
		\begin{equation*}
		\sqrt{\E_{n}} \leq \sqrt{\widehat{C}_{0}} Q^{n/2} = \sqrt{\widehat{C}_{0}} \widehat{Q}^{n}.
		\end{equation*}
		The conclusion follows from Lemma \ref{lem:ite} for $n_{0} := \max \left\lbrace n_{0}', n_{0}'' \right\rbrace$.

\item[(iii)] If $\theta \in \left( 1/2 , 1 \right)$, then $2 \theta - 1 > 0$ and thus for any $n \geq n_{0}'$
		\begin{equation*}
		\sqrt{\E_{n}} \leq \dfrac{1}{1 - \theta} C_{L} \E_{n}^{1 - \theta},
		\end{equation*}
		which implies that		
		\begin{equation*}
		\max \left\lbrace \sqrt{\E_{n}} , \varphi \left( \E_{n} \right) \right\rbrace = \varphi \left( \E_{n} \right) = \dfrac{1}{1 - \theta} C_{L} \E_{n}^{1 - \theta}.
		\end{equation*} 
		By Theorem \ref{thm:obj}, there exist $n_{0}'' \geq 1$ and $\widehat{C}_{1} > 0$ such that for any $n \geq n_{0}''$
		\begin{equation*}
		\dfrac{1}{1 - \theta} C_{L} \E_{n}^{1 - \theta} \leq\dfrac{1}{1 - \theta} C_{L} \widehat{C}_{1}^{1-\theta} \left( n - 2 \right) ^{- \frac{1 - \theta}{2 \theta - 1}} .
		\end{equation*}
		The conclusion follows again for $n_{0} := \max \left\lbrace n_{0}', n_{0}'' \right\rbrace$ from Lemma \ref{lem:ite}.
		\qedhere
\end{proof}

{\bf Ackonwledgements.} The authors are thankful to two anonymous referees for their comments and recommendations which improved the quality of the paper.

%
%
%
%

\end{document}